\newcommand{\R}{\mathbb{R}}
\newtheorem{Th}{Theorem}[section]
\newtheorem{Lemma}[Th]{Lemma}
\newtheorem{Cor}[Th]{Corollary}
\newtheorem{Prop}[Th]{Proposition}
\newcommand{\E}{\mathbb{E}}
\newcommand{\Prob}{\mathbb{P}}
\newcommand{\calA}{\mathcal{A}}
\newcommand{\calI}{\mathcal{I}}
\newcommand{\N}{\mathbb{N}}
\newcommand{\Z}{\mathbb{Z}}
\newcommand{\eps}{\varepsilon}
\newcommand{\W}[1]{\mathbf{W}_{#1}}
\newcommand{\e}[1]{\mathbf{e}_{#1}}
\renewcommand{\t}{\mathbf{\tau}}
\renewcommand{\k}{\mathbf{k}}
\numberwithin{equation}{section}
\begin{document}

\title[Asymptotic Entropy of Random Walks on Free Products]{Asymptotic Entropy of Random Walks on Free Products}

\maketitle

\centerline{\scshape Lorenz A. Gilch\footnote{Research supported by German Research Foundation (DFG) grant GI 746/1-1.}}
\medskip
\begin{center}
{\footnotesize 
Graz University of Technology\\
Institut f\"ur mathematische Strukturtheorie (Math. C)\\
Steyrergasse 30\\
A-8010 Graz\\
Austria\\[1ex]
Email: gilch@TUGraz.at\\
URL: http://www.math.tugraz.at/$\sim$gilch/}
\end{center}

\begin{abstract}
Suppose we are given the free product V of a finite family of finite or
  countable sets. We
  consider a transient random walk on the free product arising naturally from a
  convex combination of random walks on the free factors. We prove the existence of the asymptotic
  entropy and present three  different, equivalent formulas, which
  are derived by three different techniques. In particular, we will show that
  the entropy is the rate of escape with respect to the Greenian
  metric. Moreover, we link asymptotic entropy with the rate of escape and
  volume growth resulting in two inequalities.
\end{abstract}
\vspace{1cm}
{\footnotesize 
Keywords: Random Walks, Free Products, Asymptotic Entropy.\\[2ex]
AMS 2000 Subject Classification: Primary 60J10; Secondary 28D20, 20E06.\\[2ex]
Submitted to EJP on June 23, 2010; final version accepted on December 3, 2010.
}

\newpage

\section{Introduction}

Suppose we are given a finite family of finite or countable sets
$V_1,\dots,V_r$ with distinguished vertices $o_i\in V_i$ for $i\in\{1,\dots,r\}$. The free product of the sets $V_i$ is given by
$V:=V_1\ast \ldots\ast V_r$, the set of all finite words of the form $x_1\dots
x_n$ such that each letter is an element of $\bigcup_{i=1}^r
V_i\setminus\{o_i\}$ and two consecutive letters arise not from the same
$V_i$. We consider a transient Markov chain $(X_n)_{n\in\mathbb{N}_0}$ on $V$
starting at the empty word $o$,
which arises from a convex combination of transition probabilities on the sets
$V_i$. Denote by $\pi_n$ the distribution of $X_n$. We are interested in whether
the sequence  $\mathbb{E}[-\log \pi_n(X_n)]/n$ converges, and if so, to compute this constant. If the limit exists, it is called the
\textit{asymptotic entropy}. In this paper, we study this question for random
walks on general free products. In particular, we will derive three different
formulas for the entropy by using three different techniques.
\par
Let us outline some results about random walks on free products:
for free products of finite groups, Mairesse
and Math\'eus \cite{mairesse1} computed an explicit formula for the rate
of escape and asymptotic entropy by solving a finite system of polynomial
equations. Their result remains valid in the case of free products of infinite
groups, but one needs then to solve an infinite system of polynomial equations.
Gilch \cite{gilch:07} computed two different formulas for the rate
of escape with respect to the word length of random walks on free products of graphs by different techniques,
and also a third formula for free products of (not necessarily finite)
groups. The techniques of \cite{gilch:07} are adapted to the present setting. Asymptotic behaviour of return probabilities of random walks on free products has also been
studied in many ways; e.g. Gerl and Woess \cite{gerl-woess}, \cite{woess3},  
Sawyer \cite{sawyer78}, Cartwright and Soardi \cite{cartwright-soardi}, and
\mbox{Lalley \cite{lalley93},} Candellero and Gilch \cite{candellero-gilch}. 
\par
Our proof of existence of the entropy envolves generating functions techniques.
The techniques we use for rewriting probability generating functions in
terms of functions on the factors of the free product were introduced independently and
simultaneously by Cartwright and Soardi \cite{cartwright-soardi}, Woess
\cite{woess3}, Voiculescu \cite{voiculescu} and McLaughlin \cite{mclaughlin}.
In particular, we will see that asymptotic entropy
is the rate of escape with respect to a distance function in terms of Green
functions. While it
is well-known by Kingman's subadditive ergodic theorem (see Kingman \cite{kingman}) that entropy (introduced by Avez
\cite{avez72}) exists for random walks on groups whenever $\mathbb{E}[-\log
 \pi_1(X_1)]<\infty$, existence for
random walks on other structures is not known a priori. We are not able to
apply Kingman's theorem in our present setting, since
we have no (general) subadditivity and we have only a partial composition law
for two elements of the free product. For more details about
entropy of random walks on groups we refer to Kaimanovich and Vershik
\cite{kaimanovich-vershik} and Derriennic
\cite{derrienic}. 
\par
An important link
between drifts and harmonic analysis was obtained by \mbox{Varopoulos
\cite{varopoulos}.} He proved that for symmetric finite range random walks on groups the existence of non-trivial bounded
harmonic functions is equivalent to a non-zero rate of escape. Karlsson and
Ledrappier \cite{karlsson-ledrappier07} generalized this result to symmetric
random walks with finite first moment of the step lengths.
This leads to a
link between the rate of escape and the entropy of random walks, compare
e.g. with Kaimanovich and Vershik \cite{kaimanovich-vershik} and \mbox{Erschler
\cite{erschler2}.} Erschler and Kaimanovich \cite{erschler-kaimanovich:entropy-continuity} asked if drift and entropy of random walks on groups vary continuously on the probability measure, which governs the random walk. We prove real-analyticity of the entropy when varying the probabilty measure of constant support; compare also with the recent work of Ledrappier \cite{ledrappier:10}, who simultaneously proved this property for finite-range random walks on free groups.
\par
Apart from the proof of existence of the asymptotic entropy $h=\lim_{n\to\infty} \mathbb{E}[-\log \pi_n(X_n)]/n$ (Theorem \ref{thm:entropy}), we will calculate explicit formulas for the entropy (see Theorems \ref{thm:entropy}, \ref{th:entropy2}, \ref{thm:entropy-groups} and Corollary \ref{Cor:entropy-dgf}) and we will show that the entropy is non-zero. The technique of our proof of existence of the entropy was motivated by Benjamini and Peres \cite{benjamini-peres94}, where it
is shown that for random walks on groups the entropy equals the rate of escape w.r.t. the Greenian distance; compare also with
Blach\`ere, Ha\"issinsky and Mathieu \cite{blachere-haissinsky-mathieu}. We are also able to show that, for random walks on free products of graphs, the asymptotic entropy equals just the rate of escape w.r.t. the Greenian distance (see Corollary \ref{cor:greenRoE} in view of Theorem \ref{thm:entropy}). Moreover, we prove convergence in probability and convergence in $L_1$ (if the non-zero single transition probabilities are bounded away from $0$)
of the sequence $-\frac{1}{n}\log \pi_n(X_n)$ to $h$ (see Corollary \ref{cor:entropy3}), and we show also that $h$ can be computed along almost every sample path as the limes inferior of the aforementioned sequence (Corollary \ref{cor:entropy2}). In the case of random walks on discrete
groups, Kingman's subadditive ergodic theorem provides both the almost sure
convergence and the convergence in $L_1$ to the asymptotic entropy; in the case
of general free products there is neither a global composition law for elements
of the free product nor subadditivity. Thus, in the latter case we have to introduce and
investigate new processes. The question of almost sure convergence of
$-\frac{1}{n} \log \pi_n(X_n)$ to some constant $h$, however, remains open.
Similar results concerning existence and formulas for the entropy are proved in Gilch and
 M\"uller \cite{gilch-mueller} for random walks on directed
 covers of graphs. The reasoning of our proofs follows the argumentation in \cite{gilch-mueller}: we will show that the entropy equals the rate of escape w.r.t. some special length function, and we deduce the proposed properties analogously. In the present case of free products of graphs, the reasoning is getting more complicated due to the more complex structure of free products in contrast to directed covers, although the main results about existence and convergence types are very similar. We will point out these difficulties and main differences to \cite{gilch-mueller} at the end of Section \ref{subsec:entropy}. Finally, we will link entropy with the rate of escape and the growth rate of the free product, resulting in two inequalities \mbox{(Corollary \ref{cor:inequalities}).}
\par
The plan of the paper is as follows: in Section \ref{sec:preliminaries} we
define the random walk on the free product and the associated generating
functions. In Section \ref{sec:entropy} we prove existence of the asymptotic entropy and
give also an explicit formula for it. Another formula is derived in Section
\ref{sec:dgf} with the help of double generating functions and a theorem of Sawyer and
Steger \cite{sawyer}. In Section \ref{sec:groups} we use another technique to compute a third
explicit formula for the entropy of random walks on free products of (not
necessarily finite) groups. \mbox{Section \ref{sec:entropy-inequalities}} links
entropy with the rate of escape and the growth rate of the free product.
Sample computations are presented in Section \ref{sec:examples}.

\section{Random Walks on Free Products}

\label{sec:preliminaries}
\subsection{Free Products and Random Walks}
Let $\mathcal{I}:=\{1,\dots,r\}\subseteq \N$, where $r\geq 2$. For each
$i\in\mathcal{I}$, consider a random walk with
transition matrix $P_i$ on a finite
or countable state space $V_i$. W.l.o.g. we assume that the sets $V_i$ are
pairwise disjoint and we exclude the case $r=2=|V_1|=|V_2|$ (see below for
further explanation). The corresponding single and $n$-step transition
probabilities are denoted by $p_i(x,y)$ and $p_i^{(n)}(x,y)$, where
$x,y\in V_i$. For every $i\in\mathcal{I}$, we select an element $o_i$ of $V_i$
as the ``root''. To help visualize this, we
think of graphs $\mathcal{X}_i$ with vertex sets $V_i$ and roots $o_i$ such that there is
an oriented edge $x\to y$ if and only if $p_i(x,y)>0$. Thus, we have a natural
graph metric on the set $V_i$. Furthermore, we shall
assume that for every $i\in\mathcal{I}$ and every $x\in V_i$ there is some
$n_x\in\mathbb{N}$ such that $p^{(n_x)}_i(o_i,x)>0$. For sake of simplicity we
assume $p_i(x,x)=0$ for every $i\in\mathcal{I}$ and $x\in V_i$. Moreover, we assume that the random walks on $V_i$ are \textit{uniformly irreducible}, that is,
there are $\eps_0^{(i)}>0$ and $K_i\in\N$ such that for all $x,y\in V_i$
\begin{eqnarray}\label{def:uniform-irred}
p_i(x,y) > 0 \quad & \Rightarrow & \quad  p_i^{(k)}(x,y)\geq \eps_0^{(i)} \quad \textrm{ for
  some } k\leq K_i.
\end{eqnarray}
We set $K:=\max_{i\in\mathcal{I}} K_i$ and
$\varepsilon_0:=\min_{i\in\mathcal{I}} \varepsilon_0^{(i)}$.
For instance, this property is satisfied for nearest neighbour random walks on
Cayley graphs of finitely generated groups, which are governed by probability
measures on the groups.
\par
Let $V_i^\times := V_i\setminus\{o_i\}$ for every $i\in\mathcal{I}$ and let
$V_\ast^\times :=\bigcup_{i\in\mathcal{I}}V_i^\times$. The \textit{free product} is
given by
\begin{eqnarray}\label{freeproduct}
V& := & V_1\ast \ldots \ast V_r \nonumber\\
& = &  \Big\lbrace x_1x_2\dots x_n\ \Bigl|\
n\in\mathbb{N}, x_j\in V_\ast^\times, x_j\in V_k^\times \Rightarrow
x_{j+1}\notin V_k^\times\ \Big\rbrace\cup \Big\lbrace o \Big\rbrace. 
\end{eqnarray}
The elements of $V$ are ``words'' with letters, also called \textit{blocks},
  from the sets $V_i^\times$ such that no two consecutive letters come from the same
  $V_i$. The empty word $o$ describes the root \mbox{of $V$.} If $u=u_1\dots u_m\in V$ and $v=v_1\dots
v_n\in V$ with $u_m\in V_i$ and $v_1\notin V_i$ then $uv$ stands for their
concatenation as words. This is only a partial composition law, which makes
defining the asymptotic entropy more complicated than in the case of free
products of \textit{groups}. 
In particular, we set $uo_i:=u$ for all
$i\in\mathcal{I}$ and $ou:=u$. Note that $V_i\subseteq V$ and $o_i$ as a word in
$V$ is identified with $o$. The \textit{block length} of a word $u=u_1\dots u_m$
is given by $\Vert u\Vert:=m$. Additionally, we set $\Vert o\Vert:=0$. The
\textit{type} $\tau(u)$ of $u$ is defined to be $i$ if $u_m\in V_i^\times$; we
set $\tau(o):=0$. Finally, $\tilde u$ denotes the last letter $u_m$ of $u$. The
set $V$ can again be interpreted as the vertex set of a graph $\mathcal{X}$, which is
constructed as follows: take copies of
$\mathcal{X}_1,\dots \mathcal{X}_r$ and glue them together at their roots to one single common
root, which becomes $o$; inductively, at each vertex $v_1\dots v_k$ with
$v_k\in V_i$ attach a copy of every $\mathcal{X}_j$, $j\neq i$, and so on. Thus, we have
also a natural graph metric associated to the elements \mbox{in $V$.}
\par
The next step is the construction of a new Markov chain on the \textit{free
  product}. For this purpose, we lift $P_i$ to a transition matrix $\bar P_i$ on $V$: if
$x\in V$ with $\tau(x)\neq i$ and $v,w\in V_i$,
then $\bar p_i(xv,xw):=p_i(v,w)$. Otherwise we set $\bar p_i(x,y):=0$. We choose
$0<\alpha_1,\dots,\alpha_r\in\mathbb{R}$ with $\sum_{i\in\mathcal{I}} \alpha_i =
1$. Then we obtain a new transition matrix on $V$ given by
$$
P=\sum_{i\in\mathcal{I}} \alpha_i \bar P_i.
$$
The random walk on $V$ starting at $o$, which is governed by $P$, is described by the sequence of random variables
$(X_n)_{n\in\mathbb{N}_0}$. For $x,y\in V$, the associated single and $n$-step transition
probabilities are denoted by $p(x,y)$ and $p^{(n)}(x,y)$. Thus, $P$ governs a
nearest neighbour random walk on the graph $\mathcal{X}$, where $P$ arises from a convex
combination of the nearest neighbour random walks on the graphs $\mathcal{X}_i$.
\par
Theorem 3.3 in \cite{gilch:07} shows existence (including a formula) of a positive number $\ell_0$ such that
$\ell_0=\lim_{n\to\infty} \Vert X_n\Vert/n$ almost surely. The number $\ell_0$
is called the \textit{rate of escape w.r.t. the block length}. Denote by
$\pi_n$ the distribution of $X_n$. If there is a real number $h$ such that 
$$
h= \lim_{n\to\infty} \frac{1}{n}\mathbb{E}\bigl[-\log \pi_n(X_n)\bigr],
$$ 
then $h$ is called the \textit{asymptotic entropy} of the process
$(X_n)_{n\in\N_0}$; we write $\N_0:=\N\setminus \{0\}$. If the sets $V_i$ are groups and the random walks $P_i$ are
governed by probability measures $\mu_i$, existence of the asymptotic
entropy rate is well-known, and in this case we even have \mbox{$h= \lim_{n\to\infty}
-\frac{1}{n} \log \pi_n(X_n)$} almost surely; see Derriennic \cite{derrienic} and
Kaimanovich and \mbox{Vershik \cite{kaimanovich-vershik}.} We prove existence of $h$
in the case of general free products.

\subsection{Generating Functions}
Our main tool will be the usage of generating functions, which we introduce now.
The \textit{Green functions} related to $P_i$ and $P$ are given by
$$
G_i(x_i,y_i|z):= \sum_{n\geq 0} p_i^{(n)}(x_i,y_i)\,z^n \quad \textrm{ and } \quad 
G(x,y|z):= \sum_{n\geq 0} p^{(n)}(x,y)\,z^n,
$$
where $z\in\mathbb{C}$, $x_i,y_i\in V_i$ and $x,y\in V$. At this point we make
the \textit{basic assumption} that the radius of convergence $R$ of
$G(\cdot,\cdot|z)$ is strictly bigger than $1$. This implies
\textit{transience} of our random walk on $V$. Thus, we may exclude the case
$r=2=|V_1|=|V_2|$, because we get recurrence in this case. For instance, if all $P_i$ govern \textit{reversible} Markov chains, then $R>1$; see \cite[Theorem 10.3]{woess}. Furthermore, it is easy to see that $R>1$ holds also if there is some $i\in\mathcal{I}$ such that $p_i^{(n)}(o_i,o_i)=0$ for all $n\in\N$.
\par
The \textit{first visit
  generating functions} related to $P_i$ and $P$ are given by
\begin{eqnarray*}
F_i(x_i,y_i|z) & := & \sum_{n\geq 0} \Prob\bigl[Y_n^{(i)}=y_i,\forall m\leq n-1:
Y_m^{(i)}\neq y_i \mid Y_0^{(i)}=x_i\bigr]z^n\ \textrm{ and}\\
F(x,y|z) & := & \sum_{n\geq 0} \Prob\bigl[X_n=y,\forall m\leq n-1:
X_m\neq y \mid X_0=x\bigr]z^n,
\end{eqnarray*}
where $\bigl(Y_n^{(i)}\bigr)_{n\in\mathbb{N}_0}$ describes a random walk on
$V_i$ governed by $P_i$.
The stopping time of the first return to $o$ is defined as \mbox{$T_o:=\inf\{m\geq 1 \mid X_m=o \}$.}
For $i\in\mathcal{I}$, define
$$
\overline{H}_i(z):= \sum_{n\geq 1} \Prob[T_o=n,X_1\notin V_i^\times]\,z^n \
\textrm{ and } \
\xi_i(z) := \frac{\alpha_i z}{1-\overline{H}_i(z)}.
$$
We write also $\xi_i:=\xi_i(1)$, $\xi_{\min}:=\min_{i\in\mathcal{I}} \xi_i$ and
$\xi_{\max}:=\max_{i\in\mathcal{I}} \xi_i$. Observe that $\xi_i<1$; see 
\cite[Lemma 2.3]{gilch:07}.
We have $F(x_i,y_i|z)=F_i\bigl(x_i,y_i|\xi_i(z)\bigr)$ for all $x_i,y_i\in V_i$; see Woess
\cite[Prop. 9.18c]{woess}. Thus,
$$
\xi_i(z) := \frac{\alpha_i z}{1-\sum_{j\in\mathcal{I}\setminus\{i\}}
  \sum_{s\in V_j} \alpha_j p_j(o_j,s)z F_j\bigl(s,o_j \bigl| \xi_j(z)\bigr)}.
$$
For $x_i\in V_i$ and $x\in V$, define the stopping times $T^{(i)}_{x_i}:=\inf\{m\geq 1 \mid Y_m^{(i)}=x_i\}$ and
$T_{x}:=\inf\{m\geq 1 \mid X_m=x\}$, which take both values in $\N\cup\{\infty \}$.
Then the \textit{last visit generating functions} related to $P_i$ and $P$ are defined as
\begin{eqnarray*}
L_i(x_i,y_i|z) & := &\sum_{n\geq 0} \Prob\bigl[Y_n^{(i)}=y_i, T^{(i)}_{x_i}>n \mid Y_0^{(i)}=x_i\bigr]\,z^n,\\
L(x,y|z) & := &\sum_{n\geq 0} \Prob\bigl[X_n=y, T_x>n \mid X_0=x\bigr]\,z^n.
\end{eqnarray*}
If $x=x_1\dots x_n,y=x_1\dots x_{n}x_{n+1}\in V$ with $\tau(x_{n+1})=i$ then
\begin{equation}\label{l-equation}
L(x,y|z) = L_i\bigl(o_i,x_{n+1}\,\bigr|\,\xi_i(z)\bigr);
\end{equation}
this equation is proved completely analogously to \cite[Prop. 9.18c]{woess}. If all paths from $x\in V$ to $w\in V$ have
to pass through $y\in V$, then
$$
L(x,w|z) = L(x,y|z) \cdot L(y,w|z);
$$
this can be easily checked by conditioning on the last visit of $y$ when
walking from $x$ \mbox{to $w$.}
We have the following important equations, which follow by conditioning on the
last visits of $x_i$ and $x$, the first visits of $y_i$ and $y$ respectively:
\begin{equation}\label{gl-equations}
\begin{array}{rcl}
G_i(x_i,y_i|z) & = & G_i(x_i,x_i|z)\cdot L_i(x_i,y_i|z) = F_i(x_i,y_i|z)\cdot G_i(y_i,y_i|z),\\[1ex]
G(x,y|z) & = & G(x,x|z)\cdot L(x,y|z) = F(x,y|z)\cdot G(y,y|z).
\end{array}
\end{equation}
Observe that the generating functions $F(\cdot,\cdot|z)$ and
$L(\cdot,\cdot|z)$ 
have also radii of convergence strictl bigger than $1$.

\section{The Asymptotic Entropy}

\label{sec:entropy}

\subsection{Rate of Escape w.r.t. specific Length Function}

In this subsection we prove existence of the rate of escape with respect to a
specific length function. From this we will deduce existence and a formula for
the asymptotic entropy in the upcoming subsection. 
\par
We assign to each element $x_i\in V_i$ the ``length''
$$
l_i(x_i):=-\log L(o,x_i|1)=-\log L_i(o_i,x_i|\xi_i).
$$
We extend it to a length
function on $V$ by assigning to $v_1\dots v_n\in V$ the length
$$
l(v_1\dots v_n) := \sum_{i=1}^n l_{\tau(v_i)}(v_i)=-\sum_{i=1}^n \log L(o,v_i|1) = -\log L(o,v_1\dots v_n|1).
$$
Observe that the lengths can also be negative. E.g., this can be interpreted as
height differences. The aim of this subsection is to show existence of a number
$\ell\in\mathbb{R}$ such that the quotient $l(X_n)/n$ tends to
$\ell$ almost surely as $n\to\infty$. We call $\ell$ the \textit{rate of escape
  w.r.t. the length function $l(\cdot)$}.
\par
We follow now the reasoning of \cite[Section 3]{gilch:07}. Denote by $X_n^{(k)}$
the projection of $X_n$ to the first $k$ letters. We define the
\textit{$k$-th exit time} as
$$
\mathbf{e}_k := \min\bigl\lbrace m\in\N_0 \ \bigl| \ \forall n\geq m: X_n^{(k)}
\textrm{ is constant}\bigr\rbrace.
$$
Moreover, we define $\mathbf{W}_k:=X_{\mathbf{e}_k}$,
$\tau_k:=\tau(\mathbf{W}_k)$ and $\mathbf{k}(n):=\max \{k\in\N_0 \mid
\mathbf{e}_k\leq n\}$. 
We remark that $\Vert X_n\Vert \to\infty$ as
$n\to\infty$, and consequently $\mathbf{e}_k<\infty$ almost surely for every
$k\in\N$; see \cite[Prop. 2.5]{gilch:07}. 
Recall that $\widetilde{\mathbf{W}}_k$ is just the laster letter of the random word $X_{\mathbf{e}_k}$.
The process $(\tau_k)_{k\in\N}$ is Markovian and has transition probabilities
$$
\hat q(i,j) = \frac{\alpha_j}{\alpha_i} \frac{\xi_i}{\xi_j}\frac{1-\xi_j}{1-\xi_i} \Bigl(\frac{1}{(1-\xi_j)G_j(o_j,o_j|\xi_j)}-1\Bigr)
$$
for $i\neq j$ and $\hat q(i,i)=0$; see \cite[Lemma 3.4]{gilch:07}. This process is positive recurrent with invariant probability measure
\begin{eqnarray*}
\nu(i) & = & C^{-1} \cdot \frac{\alpha_i (1-\xi_i)}{\xi_i} \bigl(1-(1-\xi_i)
G_i(o_i,o_i|\xi_i)\bigr), \\
\textrm{ where } C & := &\sum_{i\in\calI} \frac{\alpha_i (1-\xi_i)}{\xi_i} \bigl(1-(1-\xi_i)
G_i(o_i,o_i|\xi_i)\bigr);
\end{eqnarray*}
see \cite[Section 3]{gilch:07}. Furthermore, the rate of escape w.r.t.
the block length exists almost surely and is given by the almost sure constant limit
$$
\ell_0=\lim_{n\to\infty}\frac{\Vert X_n\Vert}{n} = \lim_{k\to\infty}
\frac{k}{\mathbf{e}_k} = \frac{1}{\sum_{i,j\in\mathcal{I}, i\neq j} \nu(i)\,\alpha_j \frac{1-\xi_j}{1-\xi_i} \gamma_{i,j}'(1)}
$$
(see \cite[Theorem 3.3]{gilch:07}), where 
$$
\gamma_{i,j}(z) := \frac{1}{\alpha_i} \frac{\xi_i(z)}{\xi_j(z)} \Bigl(
\frac{1}{\bigl(1-\xi_j(z)\bigr) G_j\bigl(o_j,o_j\big|\xi_j(z)\bigr)}-1\Bigr).
$$
\begin{Lemma}\label{lemma:tildeW-process}
The process $\bigl(\widetilde{\mathbf{W}}_k,\t_k\bigr)_{k\in\N}$ is Markovian and has
transition probabilities
$$
q\bigl((g,i),(h,j)\bigr) = 
\begin{cases}
\frac{\alpha_j}{\alpha_i}\frac{\xi_i}{\xi_j}\frac{1-\xi_j}{1-\xi_i} L_j(o_j,h|\xi_j), & \textrm{if }
i\neq j,\\
0, & \textrm{if } i=j.
\end{cases}
$$
Furthermore, the process is positive recurrent with invariant probability measure
$$
\pi (g,i) = \sum_{j\in\calI} \nu(j) q\bigl((\ast,j),(g,i)\bigr).
$$
\end{Lemma}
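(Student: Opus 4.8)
\emph{Proof plan.}
The strategy is to refine the argument of \cite[Section~3]{gilch:07}, where the analogous statement is established for the type process $(\tau_k)_{k\in\N}$, so as to carry along the full last letter $\widetilde{\mathbf{W}}_k$ of the $k$-th exit point and not merely its type $\tau_k$. Recall that $\mathbf{W}_k=X_{\mathbf{e}_k}$ consists of the eventual first $k$ letters of the walk (so $\|\mathbf{W}_k\|=k$ and $\widetilde{\mathbf{W}}_k$ is the $k$-th of those letters), and that for all $n\ge\mathbf{e}_k$ the walk remains among the words whose first $k$ letters equal $\mathbf{W}_k$. Heuristically, by the translation invariance $\bar p_i(xv,xw)=p_i(v,w)$ of the lifted matrices, the conditional law of the future of the walk after $\mathbf{e}_k$ --- and in particular of the increment $(\widetilde{\mathbf{W}}_{k+1},\tau_{k+1})$ --- given $\mathbf{W}_k$ and the past depends on $\mathbf{W}_k$ only through the type $\tau_k$, which is exactly what the Markov property of $(\widetilde{\mathbf{W}}_k,\tau_k)_{k\in\N}$ asserts. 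The main obstacle is that $\mathbf{e}_k$ is \emph{not} a stopping time, so this cannot be read off from the strong Markov property; instead one expresses the joint law of $(\widetilde{\mathbf{W}}_1,\tau_1,\dots,\widetilde{\mathbf{W}}_n,\tau_n)$ --- equivalently, the probability that $\mathbf{W}_k=g_1\cdots g_k$ for all $k\le n$, for fixed $g_1\cdots g_n\in V$ --- through the generating functions attached to the factors by means of the renewal/generating-function decomposition of the trajectory used in \cite[Section~3]{gilch:07}, and checks that it factorizes into one-step kernels depending only on the current type. Carrying the last letter through that decomposition is the only additional bookkeeping needed.

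It then remains to identify the kernel. Conditionally on $\{\tau_{k-1}=i,\ \tau_k=j\}$ with $i\neq j$, the new block $\widetilde{\mathbf{W}}_k=h$ lies in $V_j^\times$, and a last-exit decomposition of the walk at its final departure from the copy of $V_j$ sitting above $\mathbf{W}_{k-1}$ --- exactly the computation carried out in \cite[Section~3]{gilch:07}, only now recording the terminal vertex $h$ rather than merely its type --- shows that the conditional law of $\widetilde{\mathbf{W}}_k$ on $V_j^\times$ is proportional to $h\mapsto L(\mathbf{W}_{k-1},\mathbf{W}_{k-1}h\,|\,1)$, which by \eqref{l-equation} equals $L_j(o_j,h|\xi_j)$ (the value $\xi_j$ instead of $1$ absorbing the excursions into the deeper subtrees). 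Summing the identity $G_j(o_j,y|z)=G_j(o_j,o_j|z)\,L_j(o_j,y|z)$ of \eqref{gl-equations} over $y\in V_j$ and using $\sum_{y\in V_j}G_j(o_j,y|z)=(1-z)^{-1}$ together with $L_j(o_j,o_j|z)=1$ gives
$$
\sum_{h\in V_j^\times}L_j(o_j,h|\xi_j)=\frac{1}{(1-\xi_j)\,G_j(o_j,o_j|\xi_j)}-1,
$$
which is precisely the normalizing factor occurring in $\hat q(i,j)$ of \cite[Lemma~3.4]{gilch:07}. Hence, for $i\neq j$,
$$
q\bigl((g,i),(h,j)\bigr)=\hat q(i,j)\cdot\frac{L_j(o_j,h|\xi_j)}{\sum_{h'\in V_j^\times}L_j(o_j,h'|\xi_j)}=\frac{\alpha_j}{\alpha_i}\frac{\xi_i}{\xi_j}\frac{1-\xi_j}{1-\xi_i}\,L_j(o_j,h|\xi_j),
$$
while $q\bigl((g,i),(h,j)\bigr)=0$ for $i=j$ as consecutive blocks have distinct types. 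Note that $q\bigl((g,i),(h,j)\bigr)$ does not depend on $g$, which is also the reason the type marginal is again Markov with kernel $\hat q$.

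For the last assertion, put $\mathcal{S}:=\{(g,i)\mid i\in\mathcal{I},\ g\in V_i^\times\}$. The chain is irreducible on $\mathcal{S}$: from any $(g,i)$ and any $j\neq i$ one reaches every $(h,j)$ in one step, because $L_j(o_j,h|\xi_j)>0$ for all $h\in V_j^\times$ (there is $n$ with $p_j^{(n)}(o_j,h)>0$, hence $G_j(o_j,h|\cdot)>0$ and $L_j(o_j,h|\cdot)>0$), and since $r\ge2$ one then reaches every $(h,i)$ in two steps. As $q\bigl((\ast,i),(h,j)\bigr)$ does not depend on the first coordinate and $\sum_{h\in V_j^\times}q\bigl((\ast,i),(h,j)\bigr)=\hat q(i,j)$, the measure $\pi(g,i)=\sum_{j\in\mathcal{I}}\nu(j)\,q\bigl((\ast,j),(g,i)\bigr)$ satisfies
$$
\sum_{g\in V_i^\times}\pi(g,i)=\sum_{j\in\mathcal{I}}\nu(j)\,\hat q(j,i)=\nu(i)
$$
by the $\hat q$-invariance of $\nu$, so $\sum_{(g,i)\in\mathcal{S}}\pi(g,i)=\sum_{i\in\mathcal{I}}\nu(i)=1$ and $\pi$ is a probability measure on $\mathcal{S}$; moreover it is $q$-invariant, since
$$
\sum_{(g,i)\in\mathcal{S}}\pi(g,i)\,q\bigl((g,i),(h,j)\bigr)=\sum_{i\in\mathcal{I}}\Bigl(\sum_{g\in V_i^\times}\pi(g,i)\Bigr)q\bigl((\ast,i),(h,j)\bigr)=\sum_{i\in\mathcal{I}}\nu(i)\,q\bigl((\ast,i),(h,j)\bigr)=\pi(h,j).
$$
An irreducible Markov chain possessing an invariant probability measure is positive recurrent, with that measure as its unique stationary distribution; this proves the claim.
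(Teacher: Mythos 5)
Your proposal is correct, and in substance it follows the same path as the paper (lean on the exit-time decomposition of \cite[Section~3]{gilch:07}, identify the one-step kernel via last-visit generating functions, then verify invariance of $\pi$ by direct computation), but with two noteworthy deviations. First, concerning the Markov property: the paper does not need to ``refine'' the argument of \cite{gilch:07}, because that reference already establishes Markovianity of the \emph{richer} process $\bigl(\widetilde{\mathbf{W}}_k,\mathbf{e}_k-\mathbf{e}_{k-1},\tau_k\bigr)_{k\in\N}$, i.e.\ with the last letter already carried along, together with an explicit kernel $\tilde q\bigl((g,m,i),(h,n,j)\bigr)=\frac{1-\xi_j}{1-\xi_i}\sum_{s\in V_j}k_i^{(n-1)}(s)p(s,h)$; the paper then simply sums over the time increment $n$. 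This is exactly the part of your argument that you leave as a plan (correctly flagging that $\mathbf{e}_k$ is not a stopping time), so the sketchiness there is harmless only because the needed statement is literally available in the cited source. Second, concerning the identification of the kernel: the paper computes $\sum_{n\ge1}\sum_{s\in V_j}k_i^{(n-1)}(s)p(s,h)$ directly, conditioning on the last visit to $o$, and arrives at $\frac{\alpha_j}{\alpha_i}\frac{\xi_i}{\xi_j}\frac{1-\xi_j}{1-\xi_i}L_j(o_j,h|\xi_j)$; you instead argue that the conditional law of the new letter is proportional to $L_j(o_j,\cdot|\xi_j)$ and recover the constant by normalizing against $\hat q(i,j)$ from \cite[Lemma~3.4]{gilch:07}, using the (correct) identity $\sum_{h\in V_j^\times}L_j(o_j,h|\xi_j)=\frac{1}{(1-\xi_j)G_j(o_j,o_j|\xi_j)}-1$ obtained from $\sum_{y\in V_j}G_j(o_j,y|z)=(1-z)^{-1}$ and $L_j(o_j,o_j|z)=1$. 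This is a clean shortcut, though it shifts the burden onto the proportionality claim, which you again justify only by appeal to the same decomposition; the paper's resummation is self-contained once $\tilde q$ is quoted. Your treatment of the stationary measure coincides with the paper's computation and is in fact slightly more complete: you also check that $\pi$ has total mass one (via $\hat q$-invariance of $\nu$) and that the chain is irreducible, which is what actually licenses the phrase ``positive recurrent'' that the paper leaves implicit.
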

\textit{Remark:} Observe that the transition probabilities $q\bigl((g,i),(h,j)\bigr)$ of
$\bigl(\widetilde{\mathbf{W}}_k,\t_k\bigr)_{k\in\N}$ do not depend on
$g$. Therefore, we will write sometimes an asterisk instead of $g$.
\begin{proof}
By \cite[Section 3]{gilch:07}, the process
$\bigl(\widetilde{\mathbf{W}}_k,\e{k}-\e{k-1},\tau_k\bigr)_{k\in\N}$ is Markovian and
has transition probabilities
$$
\tilde q\bigl((g,m,i),(h,n,j)\bigr) = 
\begin{cases}
\frac{1-\xi_j}{1-\xi_i} \sum_{s\in V_j} k_i^{(n-1)}(s) p(s,h), & \textrm{if
} i\neq j,\\
0, & \textrm{if } i=j,
\end{cases}
$$
where $k_i^{(n)}(s):=\Prob\bigl[X_n=s, \forall l\leq n: X_l\notin
V_i^\times |X_0=o]$ for $s\in V^\times_\ast\setminus V_i$. Thus, $\bigl(\widetilde{\W{k}},\t_k\bigr)_{k\in\N}$
is also Markovian and has the following transition probabilities if $i\neq j$:
\begin{eqnarray*}
q\bigl((g,i),(h,j)\bigr) & = & \sum_{n\geq 1} \tilde
q\bigl((g,\ast,i),(h,n,j)\bigr) 
= \frac{1-\xi_j}{1-\xi_i} \sum_{s\in V_j} \sum_{n\geq 1} k_i^{(n-1)}(s) p(s,h)\\
&=& \frac{1-\xi_j}{1-\xi_i} \sum_{s\in
  V_j} \frac{L_j(o_j,s|\xi_j)}{1-\bar{H}_i(1)} p(s,h)
= \frac{\alpha_j}{\alpha_i} \frac{\xi_i}{\xi_j} \frac{1-\xi_j}{1-\xi_i} L_j(o_j,h|\xi_j).
\end{eqnarray*}
In the third equality we conditioned on the last visit of $o$ before finally
walking from $o$ to $s$ and we remark that $h\in V_j^\times$.
A straight-forward computation shows that $\pi$ is the invariant probability
measure of $\bigl(\widetilde{\mathbf{W}}_k,\t_k\bigr)_{k\in\N}$, where we write
$\mathcal{A}:=\bigl\lbrace (g,i)\,\bigl|\,i\in\calI, g\in V_i^\times\bigr\rbrace$:
\begin{eqnarray*}
\sum_{(g,i)\in\calA} \pi(g,i) \cdot q\bigl((g,i),(h,j)\bigr)
&=& \sum_{(g,i)\in\calA} \sum_{k\in\calI} \nu(k) \cdot
q\bigl((\ast,k),(g,i)\bigr) \cdot q\bigl((\ast,i),(h,j)\bigr)\\
&=& \sum_{i\in\calI} q\bigl((\ast,i),(h,j)\bigr) \sum_{k\in\calI} \nu(k) 
\sum_{g\in V_i^\times} q\bigl((\ast,k),(g,i)\bigr)\\
&=& \sum_{i\in\calI} q\bigl((\ast,i),(h,j)\bigr) \sum_{k\in\calI} \nu(k) \cdot \hat
q(k,i) \\
& = & \sum_{i\in\calI} q\bigl((\ast,i),(h,j)\bigr)\cdot \nu(i) 
= \pi(h,j).
\end{eqnarray*}
\end{proof}
Now we are able to prove the following:
\begin{Prop}\label{prop:ell-existence}
There is a number $\ell\in\R$ such that 
$$
\ell = \lim_{n\to\infty} \frac{l(X_n)}{n} \quad \textrm{almost surely}.
$$
\end{Prop}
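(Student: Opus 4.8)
The plan is to decompose $l(X_n)$ along the exit times $\e{k}$ and to recognise its leading term as an ergodic average of the positive recurrent Markov chain $\bigl(\widetilde{\W{k}},\t_k\bigr)_{k\in\N}$ from Lemma~\ref{lemma:tildeW-process}. I would first recall from \cite[Section~3]{gilch:07} that $\Vert\W{k}\Vert=k$ and that, for every $m\geq k$, the $k$-th letter of $\W{m}$ coincides with $\widetilde{\W{k}}$; since $l(\cdot)$ is additive over the letters of a word (this is part of the definition of $l$), this gives
$$
l\bigl(\W{\mathbf{k}(n)}\bigr)=\sum_{k=1}^{\mathbf{k}(n)} l_{\t_k}\bigl(\widetilde{\W{k}}\bigr)\qquad\text{for all }n .
$$
Assuming for the moment that $(g,i)\mapsto l_i(g)$ is $\pi$-integrable, the ergodic theorem for positive recurrent, irreducible Markov chains — irreducibility of $\bigl(\widetilde{\W{k}},\t_k\bigr)$ on $\calA$ following from the reachability assumption, which forces $L_j(o_j,h|\xi_j)>0$ — yields
$$
\frac1N\sum_{k=1}^{N} l_{\t_k}\bigl(\widetilde{\W{k}}\bigr)\ \longrightarrow\ \bar\ell:=\sum_{(g,i)\in\calA}\pi(g,i)\,l_i(g)\qquad\text{almost surely as }N\to\infty .
$$
Now $\mathbf{k}(n)\to\infty$ and $\mathbf{k}(n)/n\to\ell_0$ almost surely — the latter because $k/\e{k}\to\ell_0$ (\cite[Theorem~3.3]{gilch:07}) and $\e{\mathbf{k}(n)}\leq n<\e{\mathbf{k}(n)+1}$ — so $l\bigl(\W{\mathbf{k}(n)}\bigr)/n\to\ell_0\bar\ell$. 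Hence $\ell:=\ell_0\bar\ell$ will be the desired limit, provided I establish (i) the $\pi$-integrability of $l_i(\cdot)$ and (ii) that $l(X_n)-l\bigl(\W{\mathbf{k}(n)}\bigr)=o(n)$ almost surely.

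Step (i) is where the real work lies, and I expect it to be the main obstacle. From Lemma~\ref{lemma:tildeW-process} the invariant measure factorises as $\pi(g,i)=c_i\,L_i(o_i,g|\xi_i)$ with $0<c_i<\infty$, so it suffices to prove $\sum_{g\in V_i^\times} L_i(o_i,g|\xi_i)\,\bigl|\log L_i(o_i,g|\xi_i)\bigr|<\infty$ for each $i\in\calI$. For an upper bound on $L_i$, note that since $P_i$ is stochastic $\sum_{g\in V_i}G_i(o_i,g|\xi_i)=\sum_{n\geq0}\xi_i^{\,n}=(1-\xi_i)^{-1}$, so by \eqref{gl-equations}
$$
\sum_{g\in V_i}L_i(o_i,g|\xi_i)=\frac{1}{(1-\xi_i)\,G_i(o_i,o_i|\xi_i)}<\infty ,
$$
whence $L_i(o_i,g|\xi_i)\leq D_i$ for a constant $D_i$. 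For a matching lower bound I would use uniform irreducibility: by the reachability assumption every $g\in V_i$ is joined to $o_i$ by a path $o_i=x_0,x_1,\dots,x_m=g$ in $\mathcal{X}_i$ with $m=|g|_i$ the graph distance, and \eqref{def:uniform-irred} gives $k_j\leq K_i$ with $p_i^{(k_j)}(x_{j-1},x_j)\geq\eps_0$, so that $p_i^{(k_1+\dots+k_m)}(o_i,g)\geq\eps_0^{\,m}$ with $k_1+\dots+k_m\leq K_i m$; together with \eqref{gl-equations} this yields $L_i(o_i,g|\xi_i)\geq\bigl(\xi_i^{K_i}\eps_0\bigr)^{|g|_i}\big/G_i(o_i,o_i|\xi_i)$. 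Combining the two bounds gives $|l_i(g)|\leq C\,(1+|g|_i)$ for a constant $C$ independent of $i$ and $g$ (using finiteness of $\calI$), so it remains to check $\sum_{g\in V_i}L_i(o_i,g|\xi_i)\,|g|_i<\infty$. This holds because $\bigl(Y_n^{(i)}\bigr)$ is nearest neighbour on $\mathcal{X}_i$, hence $|Y_n^{(i)}|_i\leq n$, and
$$
\sum_{g\in V_i}L_i(o_i,g|\xi_i)\,|g|_i\ \leq\ \sum_{n\geq0}\xi_i^{\,n}\,\E_{o_i}\bigl[|Y_n^{(i)}|_i\bigr]\ \leq\ \sum_{n\geq0}n\,\xi_i^{\,n}\ <\ \infty
$$
since $\xi_i<1$. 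The point is that mere summability of $L_i(o_i,\cdot|\xi_i)$ does not control the entropy-type sum $\sum_g L_i(o_i,g|\xi_i)\,|\log L_i(o_i,g|\xi_i)|$; the exponential lower bound furnished by uniform irreducibility is precisely what makes it converge.

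For step (ii), I would observe that whenever $\e{\mathbf{k}(n)}\leq m\leq n<\e{\mathbf{k}(n)+1}$ the projection $X_m^{(\mathbf{k}(n))}$ is constant, so $X_n$ has $\W{\mathbf{k}(n)}$ as a prefix. Consequently $l(X_n)-l\bigl(\W{\mathbf{k}(n)}\bigr)$ is the sum of $l_{\tau(v)}(v)$ over the letters $v$ of $X_n$ sitting in positions $>\mathbf{k}(n)$, and by the bound $|l_i(g)|\leq C(1+|g|_i)$ its absolute value is at most $C\bigl(\Vert X_n\Vert-\mathbf{k}(n)\bigr)+C\,d_{\mathcal{X}}\bigl(\W{\mathbf{k}(n)},X_n\bigr)$, where $d_{\mathcal{X}}$ is the graph metric on $\mathcal{X}$. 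Both $\Vert X_n\Vert-\mathbf{k}(n)$ and $d_{\mathcal{X}}\bigl(\W{\mathbf{k}(n)},X_n\bigr)$ are at most $n-\e{\mathbf{k}(n)}\leq\e{\mathbf{k}(n)+1}-\e{\mathbf{k}(n)}$, since the walk travels from $X_{\e{\mathbf{k}(n)}}=\W{\mathbf{k}(n)}$ to $X_n$ in that many steps. From $\e{k}/k\to1/\ell_0$ almost surely one gets $(\e{k+1}-\e{k})/k\to0$, and since $\mathbf{k}(n)\to\infty$ with $\mathbf{k}(n)/n\to\ell_0>0$ this forces $\bigl(\e{\mathbf{k}(n)+1}-\e{\mathbf{k}(n)}\bigr)/n\to0$. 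Hence $\bigl(l(X_n)-l(\W{\mathbf{k}(n)})\bigr)/n\to0$ almost surely, and combining with the first paragraph we conclude $l(X_n)/n\to\ell_0\bar\ell=\ell$ almost surely, as claimed.
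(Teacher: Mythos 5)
Your proposal is correct and follows essentially the same route as the paper: decompose $l$ along the exit times, apply the ergodic theorem to the positive recurrent chain $\bigl(\widetilde{\mathbf{W}}_k,\t_k\bigr)$, establish $\pi$-integrability of $l_i$ via the upper bound on $L_i$ and the lower bound $(\eps_0\xi_i^{K_i})^{|g|_i}$ from uniform irreducibility, and show $l(X_n)-l(\mathbf{W}_{\mathbf{k}(n)})=o(n)$ using $n-\mathbf{e}_{\mathbf{k}(n)}=o(n)$. The only (harmless) deviations are that you prove the summability $\sum_g L_i(o_i,g|\xi_i)\,|g|_i<\infty$ directly from the nearest-neighbour bound and $\xi_i<1$, where the paper cites $\sum_{(g,j)}|g|\,\pi(g,j)<\infty$ from \cite{gilch:07}, and you replace the reference to Nagnibeda--Woess for the $o(n)$ step by the short argument via $(\mathbf{e}_{k+1}-\mathbf{e}_k)/k\to 0$.
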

\begin{proof}
Define $h:\mathcal{A}\to \R$ by $h(g,j):=l(g)$. Then $\sum_{\lambda=1}^k h\bigl(\widetilde{\mathbf{W}}_{\lambda},\t_{\lambda}\bigr) = \sum_{\lambda=1}^k l\bigl(\widetilde{\mathbf{W}}_{\lambda}\bigr) = l(\mathbf{W}_k)$.
An application of the ergodic theorem for positive recurrent Markov chains yields
$$
\frac{l(\mathbf{W}_k)}{k}=\frac{1}{k}\sum_{\lambda=1}^k h\bigl(\widetilde{\mathbf{W}}_{\lambda},\t_{\lambda}\bigr) 
\xrightarrow{n\to\infty} C_h := \int h \,d\pi,
$$
if the integral on the right hand side exists. We now show that this property holds. Observe that the values $G_j(o_j,g|\xi_j)$
 are uniformly bounded from above for all $(g,j)\in\mathcal{A}$:
$$
G_j(o_j,g|\xi_j) = \sum_{n\geq 0} p_j^{(n)}(o_j,g)\,\xi_j^n \leq \frac{1}{1-\xi_j} \leq \frac{1}{1-\xi_{\max}}.
$$
For $g\in V_\ast^\times$, denote by $|g|$ the smallest $n\in\N$ such that $p_{\tau(g)}^{(n)}(o_{\tau(g)},g)>0$.
Uniform irreducibility of the random walk $P_i$ on $V_i$ implies that
there are some $\eps_0>0$ and $K\in\N$ such that for all $j\in\mathcal{I}$,
$x_j,y_j\in V_j$ with $p_j(x_j,y_j)>0$ we have
$p_j^{(k)}(x_j,y_j)\geq \eps_0$ for some $k\leq K$. Thus, for
$(g,j)\in\mathcal{A}$ we have 
$$
G_j(o_j,g|\xi_j) \geq \eps_0^{|g|} \xi_j^{|g|\cdot K} \geq  \bigl(\eps_0\, \xi_{\min}^K\bigr)^{|g|}.
$$
Observe that the inequality $|g|\cdot \bigl|\log\bigl(\eps_0\, \xi_{\min}^K\bigr)\bigr| <
\log 1/(1-\xi_{\max})$ holds if and only if $|g| < \log
(1-\xi_{\max})/\log (\eps_0\,\xi_{\min}^K)$. Define the sets
$$
M_1 :=  \Bigl\lbrace g\in V_\ast^\times \, \Bigl|\,
|g| \geq \frac{\log (1-\xi_{\max})}{\log (\eps_0\,\xi_{\min}^K)} \Bigr\rbrace,\quad
M_2  :=  \Bigl\lbrace g\in V_\ast^\times \, \Bigl|\, 
|g| < \frac{\log (1-\xi_{\max})}{\log (\eps_0\,\xi_{\min}^K)} \Bigr\rbrace.
$$
Recall Equation (\ref{gl-equations}). We can now prove existence of $\int h \,d\pi$:
\begin{eqnarray*}
\int |h| \,d\pi & = & \sum_{(g,j)\in\calA} \bigl|\log L_j(o_j,g|\xi_j)\bigr|
\cdot \pi(g,j)
\\
&\leq &  \sum_{(g,j)\in\calA} \bigl|\log G_j(o_j,g|\xi_j)\bigr|\cdot \pi(g,j) +
\sum_{(g,j)\in\calA} \bigl|\log G_j(o_j,o_j|\xi_j)\bigr|\cdot \pi(g,j)\\
&\leq & \sum_{(g,j)\in\calA : g\in M_1} \bigl|\log G_j(o_j,g|\xi_j)\bigr|
\cdot\pi(g,j) \\
&& \quad 
+\sum_{(g,j)\in\calA : g\in M_2} \bigl|\log G_j(o_j,g|\xi_j)\bigr|\cdot \pi(g,j)+
\max_{j\in\mathcal{I}} \log
G_j(o_j,o_j|\xi_j)\\
&\leq & \sum_{(g,j)\in\calA : g\in M_1} \bigl|\log (\eps_0\xi_{\min}^K)^{|g|}|
\cdot\pi(g,j) \\
&&\quad + \sum_{(g,j)\in\calA : g\in M_2} \bigl|\log (1-\xi_{\max})\bigr|
\cdot\pi(g,j)+ \max_{j\in\mathcal{I}} \log
G_j(o_j,o_j|\xi_j)\\
&\leq & \sum_{(g,j)\in\calA : g\in M_1} \bigl|\log (\eps_0\xi_{\min}^K)|\cdot |g|\cdot
\pi(g,j)  \\
&& \quad + \bigl|\log  (1-\xi_{\max})\bigr| +\max_{j\in\mathcal{I}} \log
G_j(o_j,o_j|\xi_j) < \infty,
\end{eqnarray*}
since $\sum_{(g,j)\in\calA} |g|\cdot \pi(g,j)<\infty$; see \cite[Proof of Prop. 3.2]{gilch:07}.
From this follows that $l(\W{k})/k$ tends to $C_h$ almost surely. The next step is to show that 
\begin{equation}
\frac{l(X_n)-l(\W{\k(n)})}{n} \xrightarrow{n\to\infty} 0 \quad \textrm{ almost surely.}
\end{equation}
To prove this, assume now that we have the representations
$\W{\k(n)}=g_1g_2\dots g_{\k(n)}$ and $X_n=g_1g_2\dots g_{\k(n)}\dots
g_{\|X_n\|}$. Define $M:= \max\bigl\lbrace |\log (\eps_0\,\xi_{\min}^K)|,
|\log(1-\xi_{\max})|\bigr\rbrace$. Then:
\begin{eqnarray*}
\bigl|l(X_n)-l(\W{\k(n)}) \bigr| 
&=&\biggl| -\sum_{i=\k(n)+1}^{\|X_n \|} \log L_{\tau(g_i)}\bigl(o_{\tau(g_i)},g_i \mid \xi_{\tau(g_i)}\bigr)\biggr|\\
&\leq & \sum_{i=\k(n)+1}^{\|X_n \|} \bigg| \log
\frac{G_{\tau(g_i)}\bigl(o_{\tau(g_i)},g_i \mid
  \xi_{\tau(g_i)}\bigr)}{G_{\tau(g_i)}\bigl(o_{\tau(g_i)},o_{\tau(g_i)} \mid
  \xi_{\tau(g_i)}\bigr)}\biggr|\\
& \leq & \sum_{i=\k(n)+1 : g_i\in M_1}^{\|X_n \|} \big| \log
G_{\tau(g_i)}\bigl(o_{\tau(g_i)},g_i \mid \xi_{\tau(g_i)}\bigr)\bigr| \\
&&\quad + \sum_{i=\k(n)+1 : g_i\in M_2}^{\|X_n\|} \big| \log
G_{\tau(g_i)}\bigl(o_{\tau(g_i)},g_i \mid \xi_{\tau(g_i)}\bigr)\bigr| \\[1ex]
&&\quad + \bigl(\|X_n\|-\k(n)\bigr) \cdot  \big| \log (1-\xi_{\max})\bigr|
\end{eqnarray*}
\begin{eqnarray*}
&\leq & \sum_{i=\k(n)+1 : g_i\in M_1}^{\|X_n\|} \big| \log  (\eps_0\,\xi_{\min}^K)^{|g_i|}
\bigr| \\
&& \quad + \sum_{i=\k(n)+1 : g_i\in M_2}^{\|X_n\|} \big| \log
(1-\xi_{\max})\bigr| + \bigl(\|X_n\|-\k(n)\bigr) \cdot  \big| \log (1-\xi_{\max})\bigr|\\
&\leq & \sum_{i=\k(n)+1 : g_i\in M_1}^{\|X_n \|} |g_i|\cdot M  + \sum_{i=\k(n)+1 : g_i\in M_2}^{\|X_n\|}
 M+\bigl(\|X_n\|-\k(n)\bigr) \cdot M \\[1ex]
& \leq & 3 \cdot M\cdot (n-\e{\k(n)}).
\end{eqnarray*}
Dividing the last inequality by $n$ and letting $n\to\infty$ provides
analogously to Nagnibeda and Woess \cite[Section 5]{woess2} that
$\lim_{n\to\infty} \bigl(l(X_n)-l(\W{\k(n)})\bigr)/n=0$ almost surely. Recall also that
$k/\e{k}\to \ell_0$ and $\e{\k(n)}/n\to 1$ almost surely; compare \cite[Proof
of Theorem D]{woess2} and \cite[Prop. 3.2, Thm. 3.3]{gilch:07}. Now we can conclude:
\begin{equation}\label{ell0-limit}
\frac{l(X_n)}{n} = \frac{l(X_n)-l(\W{\k(n)})}{n}+\frac{l(\W{\k(n)})}{\k(n)}\frac{\k(n)}{\e{k(n)}}\frac{\e{\k(n)}}{n}
\xrightarrow{n\to\infty} C_h\cdot \ell_0 \quad \textrm{almost surely}.
\end{equation}
\end{proof}
We now compute the constant $C_h$ from the last proposition explicitly:
\begin{eqnarray}
 C_h & = & \sum_{(g,j)\in\calA} l(g) \cdot  \sum_{i\in\calI} \nu(i) \cdot q\bigl((\ast,i),(g,j)\bigr)\nonumber\\
&=& \sum_{\substack{i,j\in \mathcal{I},\\ i\neq j}}\sum_{g\in V_j^\times} -\log
L_j(o_j,g|\xi_j)\,\nu(i) \frac{\alpha_j}{\alpha_i}\frac{\xi_i}{\xi_j}
\frac{1-\xi_j}{1-\xi_i}\,L_j(o_j,g|\xi_j).\label{Ch-formel} 
\end{eqnarray}
We conclude this subsection with the following observation:
\begin{Cor}\label{cor:greenRoE}
The rate of escape $\ell$ is non-negative and it is the rate of escape w.r.t. the
Greenian metric, which is given by $d_\textrm{Green}(x,y):=-\log F(x,y|1)$. That is,
$$
\ell = \lim_{n\to\infty} -\frac{1}{n}\log F(e,X_n|1) \geq 0.
$$
\end{Cor}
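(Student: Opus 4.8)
The plan is to follow the group‑case scheme of Benjamini and Peres \cite{benjamini-peres94}: write $-\log F(o,X_n|1)$ as $l(X_n)$ plus an error that is bounded uniformly in $n$, and then feed this into Proposition~\ref{prop:ell-existence}. (I write $o$ for the base point $e$; since $\Vert X_n\Vert\to\infty$ we have $X_n\neq o$ for all large $n$ almost surely, so the manipulations below are eventually valid.)

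First I would apply equation~(\ref{gl-equations}) with $x=o$ and $y=X_n$, obtaining $G(o,o|1)\,L(o,X_n|1)=G(o,X_n|1)=F(o,X_n|1)\,G(X_n,X_n|1)$. Taking $-\log$ and using $l(X_n)=-\log L(o,X_n|1)$ this becomes
$$-\log F(o,X_n|1)\;=\;l(X_n)\;+\;\log G(X_n,X_n|1)\;-\;\log G(o,o|1),$$
so everything reduces to showing that $\log G(X_n,X_n|1)$ stays bounded and then appealing to Proposition~\ref{prop:ell-existence}.

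For the bound I would show that $G(x,x|1)\leq\bigl(\alpha_{\min}(1-\xi_{\max})\bigr)^{-1}$ uniformly in $x\in V$, where $\alpha_{\min}:=\min_{i\in\calI}\alpha_i$. Given $x$, choose $j\in\calI$ with $j\neq\tau(x)$ (possible since $r\geq 2$). With probability $\alpha_j\,p_j(o_j,s)$ the first step from $x$ goes to the word $xs$ with $s\in V_j^\times$, and the probability that the walk started at $xs$ ever hits the parent $x$ equals $F(xs,x|1)=F_j(s,o_j|\xi_j)$; this is the first‑passage‑to‑the‑root‑of‑a‑freshly‑attached‑copy situation, established exactly as for equation~(\ref{l-equation}) (cf.\ \cite[Prop.~9.18c]{woess}). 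Since $s\neq o_j$, the series $F_j(s,o_j|\cdot)$ has vanishing constant term, so $F_j(s,o_j|\xi_j)\leq\xi_j\,F_j(s,o_j|1)\leq\xi_j\leq\xi_{\max}<1$. Summing over $s$, the escape probability from $x$ is at least $\sum_{s\in V_j^\times}\alpha_j\,p_j(o_j,s)\bigl(1-F_j(s,o_j|\xi_j)\bigr)\geq\alpha_j(1-\xi_{\max})\geq\alpha_{\min}(1-\xi_{\max})$, and since $G(x,x|1)$ is the reciprocal of this escape probability, the claimed bound follows; together with $G(x,x|1)\geq 1$ this makes $|\log G(X_n,X_n|1)|$ bounded uniformly in $n$.

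Dividing the displayed identity by $n$, the last two terms are $O(1/n)$ and vanish, while $l(X_n)/n\to\ell$ almost surely by Proposition~\ref{prop:ell-existence}; hence $-\frac{1}{n}\log F(o,X_n|1)\to\ell$ almost surely, which is the asserted formula, $d_\textrm{Green}(o,X_n)$ being exactly $-\log F(o,X_n|1)$. For non‑negativity, $F(o,y|1)=\Prob[\,\exists\,m\geq 0:\ X_m=y\mid X_0=o\,]\leq 1$ for every $y\in V$, and $F(o,X_n|1)>0$ because $p^{(n)}(o,X_n)>0$; thus $-\log F(o,X_n|1)\geq 0$ for all $n$, whence $\ell\geq 0$. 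I do not anticipate a genuine obstacle here once Proposition~\ref{prop:ell-existence} is in hand: the only substantive point is the uniform bound on $G(x,x|1)$, and within it the observation that $F(xs,x|1)$ is a first‑passage generating function with vanishing constant term, so that evaluating it at $\xi_j<1$ costs a definite multiplicative factor.
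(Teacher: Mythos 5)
Your argument is correct, and its skeleton is the same as the paper's: you use (\ref{gl-equations}) to write $-\log F(o,X_n|1)=l(X_n)+\log G(X_n,X_n|1)-\log G(o,o|1)$, invoke Proposition \ref{prop:ell-existence} for the main term, note $F(o,X_n|1)\le 1$ for non-negativity, and reduce everything to a uniform bound on $G(x,x|1)$. Where you genuinely differ is in how that bound is obtained. The paper writes $G(x,x|1)=\bigl(1-U(x,x|1)\bigr)^{-1}$ and uses the standing hypothesis $R>1$ together with monotonicity/convexity of $z\mapsto U(x,x|z)$ to conclude $U(x,x|1)\le 1/R$, hence $G(x,x|1)\le(1-1/R)^{-1}$ uniformly -- a two-line argument that works for any transient chain with spectral radius less than $1$ and needs nothing about free products. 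You instead give a structural escape-probability bound: stepping from $x$ into a freshly attached copy of $V_j$ ($j\neq\tau(x)$, using $p_j(o_j,o_j)=0$ so the first-step probabilities sum to $\alpha_j$), the return probability is $F(xs,x|1)=F_j(s,o_j|\xi_j)\le\xi_j\le\xi_{\max}<1$, giving $G(x,x|1)\le\bigl(\alpha_{\min}(1-\xi_{\max})\bigr)^{-1}$. This is valid: the identity $F(xs,x|1)=F_j(s,o_j|\xi_j)$ for a copy attached at general $x$ is indeed proved by the same homogeneity argument as (\ref{l-equation}) (Woess, Prop.~9.18c), the vanishing constant term gives the factor $\xi_j$, and $\xi_i<1$ is available from \cite[Lemma 2.3]{gilch:07}. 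What each approach buys: the paper's bound is shorter and independent of the free-product structure, with the constant expressed through $R$; yours avoids appealing to $R$ at this point and yields an explicit constant in terms of $\alpha_{\min}$ and $\xi_{\max}$, at the cost of justifying the first-passage identity on attached copies. Both are complete proofs of the corollary.
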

\begin{proof}
By (\ref{gl-equations}), we get
$$
\ell  =  \lim_{n\to\infty} -\frac{1}{n} \log F(e,X_n|1) - \frac{1}{n} \log
G(X_n,X_n|1) + \frac{1}{n} \log G(o,o|1).
$$
Since $F(e,X_n|1)\leq 1$ it remains to show that $G(x,x|1)$ is uniformly
bounded in $x\in V$: for $v,w\in V$, the \textit{first visit generating function} is defined as 
\begin{equation}\label{def:1st-visit}
U(v,w|z)=\sum_{n\geq 1} \Prob\bigl[X_n=w,\forall m\in\{1,\dots,n-1\}: X_m\neq w
\mid X_0=v\bigr]\,z^n.
\end{equation}
Therefore, 
$$
G(x,x|z)=\sum_{n\geq 0} U(x,x|z)^n = \frac{1}{1-U(x,x|z)}.
$$
Since $U(x,x|z)<1$ for all $z\in[1,R)$, $U(x,x|0)=0$ and $U(x,x|z)$ is
continuous, stricly increasing and strictly convex, we must have $U(x,x|1)\leq
\frac{1}{R}$, that is, $1\leq G(x,x|1)\leq
\bigl(1-\frac{1}{R}\bigr)^{-1}$. This finishes the proof.
\end{proof}

\subsection{Asymptotic Entropy}
\label{subsec:entropy}
In this subsection we will prove that $\ell$ equals the asymptotic
entropy, and we will give explicit formulas for it. The technique of the proof which we will give was motivated by
Benjamini and Peres \cite{benjamini-peres94}, where it is shown that the
asymptotic entropy of random walks on discrete
groups equals the rate of escape w.r.t. the Greenian distance. The proof
follows the same reasoning as in Gilch and M\"uller \cite{gilch-mueller}. 
\par
Recall that we made the assumption that the spectral radius of
$(X_n)_{n\in\N_0}$ is strictly smaller than $1$, that is, the Green function
$G(o,o|z)$ has radius of convergence $R>1$. Moreover, the functions $\xi_i(z)$, $i\in\calI$,
have radius of convergence bigger than $1$. Recall that $\xi_i=\xi_i(1)<1$ for every $i\in\calI$. Thus, we can choose $\varrho\in
(1,R)$ such that $\xi_i(\varrho)<1$ for all $i\in\calI$.
We now need the
following three technical lemmas:
\begin{Lemma}\label{lemma:prob-bound}
For all $m,n\in\N_0$,
$$
p^{(m)}(o,X_n) \leq G(o,o|\varrho)\cdot  \Bigl(\frac{1}{1-\max_{i\in\calI} \xi_i(\varrho)}\Bigr)^n \cdot \varrho^{-m}.
$$
\end{Lemma}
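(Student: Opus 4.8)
The plan is to bound $p^{(m)}(o,X_n)$ by decomposing an $m$-step path from $o$ to $X_n=x$ according to the block structure of $x$. Write $x=x_1x_2\dots x_k$ with $k=\|x\|$ and $\tau(x_j)=i_j$. Any path of length $m$ from $o$ to $x$ must, in particular, pass through the successive prefixes $o=x_1\dots x_0$, $x_1$, $x_1x_2$, \dots, $x_1\dots x_k=x$; hence by conditioning on the first visits to these prefixes we get the factorization $F(o,x|z)=\prod_{j=1}^k F(x_1\dots x_{j-1},\,x_1\dots x_j\,|\,z)=\prod_{j=1}^k F_{i_j}(o_{i_j},x_j\,|\,\xi_{i_j}(z))$, using the identity $F(x_i,y_i|z)=F_i(x_i,y_i|\xi_i(z))$ from the excerpt. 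Since $p^{(m)}(o,x)\le G(o,x|z)/z^m\cdot$ (more precisely $p^{(m)}(o,x)\,z^m\le G(o,x|z)$ for $z\in[1,R)$, as every term of the Green function is nonnegative), and $G(o,x|z)=F(o,x|z)\,G(x,x|z)$ by (\ref{gl-equations}), it suffices to bound $F(o,x|\varrho)$ and $G(x,x|\varrho)$.

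First I would handle $G(x,x|\varrho)$: exactly as in the proof of Corollary \ref{cor:greenRoE}, $G(x,x|z)=(1-U(x,x|z))^{-1}$ with $U(x,x|\varrho)<1$ (since $\varrho<R$), but here we need a cleaner bound — one shows $G(x,x|\varrho)\le G(o,o|\varrho)$, because any excursion from $x$ back to $x$ projects, via the last block, to an excursion contributing to a Green function on a factor evaluated at $\xi_i(\varrho)$, and these are dominated by the corresponding quantities at the root; alternatively $U(x,x|\varrho)\le U(o,o|\varrho)$ by the same last-block argument, giving $G(x,x|\varrho)\le G(o,o|\varrho)$ directly. Next, for the first-visit factor, each $F_{i_j}(o_{i_j},x_j\,|\,\xi_{i_j}(\varrho))\le 1$ is false in general since $\varrho>1$, so instead I bound it by a Green-function ratio: $F_{i_j}(o_{i_j},x_j|\xi_{i_j}(\varrho))=G_{i_j}(o_{i_j},x_j|\xi_{i_j}(\varrho))/G_{i_j}(x_j,x_j|\xi_{i_j}(\varrho))\le G_{i_j}(o_{i_j},x_j|\xi_{i_j}(\varrho))\le \frac{1}{1-\xi_{i_j}(\varrho)}\le \frac{1}{1-\max_{i}\xi_i(\varrho)}$, where the middle inequality uses $G_i(x_i,x_i|w)\ge 1$ and the geometric-series estimate $G_i(o_i,x_i|w)\le\sum_n w^n=1/(1-w)$ valid for $0<w<1$. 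Multiplying over the $k=\|x\|$ blocks gives $F(o,x|\varrho)\le\bigl(1-\max_i\xi_i(\varrho)\bigr)^{-\|x\|}$.

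Combining, for $x=X_n$ we obtain
$$
p^{(m)}(o,X_n)\,\varrho^{m}\le G(o,X_n|\varrho)=F(o,X_n|\varrho)\,G(X_n,X_n|\varrho)\le \Bigl(\tfrac{1}{1-\max_i\xi_i(\varrho)}\Bigr)^{\|X_n\|}\cdot G(o,o|\varrho),
$$
and since $\|X_n\|\le n$ and $\frac{1}{1-\max_i\xi_i(\varrho)}>1$, we may replace the exponent $\|X_n\|$ by $n$, which yields the claimed bound after dividing by $\varrho^m$. The main obstacle is the bound $G(X_n,X_n|\varrho)\le G(o,o|\varrho)$ at the off-root point $X_n$ with the larger argument $\varrho>1$; the key point making it work is that every return excursion to a word $x$ stays, until it returns, in the cone of words with prefix $x_1\dots x_{k-1}$ and is governed (through $\xi_{\tau(x)}(\varrho)$) by the factor walk $P_{\tau(x)}$, so the return generating function is literally $U_{\tau(x)}(x_k,x_k|\xi_{\tau(x)}(\varrho))$ together with excursions into the attached factors, and this is bounded by the analogous quantity at $o_{\tau(x)}$, hence by $U(o,o|\varrho)$; the only subtlety is checking the argument stays below the relevant radii of convergence, which is exactly the purpose of the choice $\varrho\in(1,R)$ with $\xi_i(\varrho)<1$ made just before the lemma.
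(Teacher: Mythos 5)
Your reduction of the lemma to a bound on $G(o,X_n|\varrho)$ is sound: the elementary inequality $p^{(m)}(o,x)\,\varrho^m\le G(o,x|\varrho)$ (nonnegative coefficients) even replaces the paper's contour-integral step, and the factorization of $F(o,x|\varrho)$ over the prefixes together with $F_{i}(o_{i},x_j|\xi_{i}(\varrho))\le G_{i}(o_{i},x_j|\xi_{i}(\varrho))\le \bigl(1-\xi_{i}(\varrho)\bigr)^{-1}$ is correct. The genuine gap is exactly the step you single out as the main obstacle: the claim $G(X_n,X_n|\varrho)\le G(o,o|\varrho)$, equivalently $U(x,x|\varrho)\le U(o,o|\varrho)$. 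This is unjustified and is false in general in the setting of this paper. A free product of \emph{graphs} is not vertex-transitive: for $x=x_1\dots x_k$ with $\tau(x)=i$, the last letter $x_k$ sits at a non-root vertex of its copy of $\mathcal{X}_i$, so return excursions from $x$ are governed by the factor walk started at $x_k$, not at $o_i$, and there is no domination by excursions at the root. Concretely, let $\mathcal{X}_1$ be a large complete graph $K_{N+1}$ with $o_1$ attached by a single pendant edge to one clique vertex (simple random walk), and let $\mathcal{X}_2$ be any fixed finite graph: a one-letter word $x=w$ with $w$ a clique vertex can be re-entered directly from each of its $N$ clique neighbours, whereas $o$ can only be re-entered through the pendant edge, with probability of order $1/N$ per visit to its unique neighbour; hence $U(x,x|1)>U(o,o|1)$ and, by continuity, $G(x,x|\varrho)>G(o,o|\varrho)$ for $\varrho$ near $1$. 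Your ``projection via the last block'' sketch cannot repair this, since it would compare $x_k$-excursions with $o_i$-excursions inside $\mathcal{X}_i$, which is precisely the false comparison.

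There are two ways to close the gap. The paper's route avoids $G(x,x|\varrho)$ altogether by using the other identity in (\ref{gl-equations}), namely $G(o,x|z)=G(o,o|z)\,L(o,x|z)$, together with (\ref{l-equation}): $L(o,x|\varrho)=\prod_{k=1}^{\Vert x\Vert}L_{\tau(x_k)}\bigl(o_{\tau(x_k)},x_k|\xi_{\tau(x_k)}(\varrho)\bigr)$, and each last-visit factor is at most $\sum_{n\ge 0}\xi_{\tau(x_k)}(\varrho)^n\le\bigl(1-\max_{i\in\calI}\xi_i(\varrho)\bigr)^{-1}$; this produces exactly the stated constant $G(o,o|\varrho)$. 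Alternatively, keeping your first-visit factorization, you could replace the false comparison by the uniform bound $U(x,x|\varrho)\le\varrho/R$ (the convexity argument from the proof of Corollary \ref{cor:greenRoE} applied at $\varrho$ instead of $1$), giving $G(x,x|\varrho)\le(1-\varrho/R)^{-1}$; that proves the lemma with a different constant, which would still suffice for its only use in the proof of Theorem \ref{thm:entropy}, but it is not the statement as printed.
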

\begin{proof}
Denote by $\mathcal{C}_\varrho$ the circle with radius $\varrho$ in the complex plane centered at
$0$. A straightforward computation shows  for $m\in\mathbb{N}_0$: 
$$
\frac{1}{2\pi i} \oint_{\mathcal{C}_\varrho} z^m \frac{dz}{z} = 
\begin{cases}
1, & \textrm{if } m=0,\\
0, & \textrm{if } m\neq 0.
\end{cases}
$$
Let be $x=x_1\dots x_t\in V$. An application of Fubini's Theorem yields
\begin{eqnarray*}
\frac{1}{2\pi i} \oint_{\mathcal{C}_\varrho} G(o,x|z)\,z^{-m} \frac{dz}{z} & = &
\frac{1}{2\pi i} \oint_{\mathcal{C}_\varrho} \sum_{k\geq 0} p^{(k)}(o,x) z^k\,z^{-m}
\frac{dz}{z}\\
&=&
\frac{1}{2\pi i} \sum_{k\geq 0} p^{(k)}(o,x) \oint_{\mathcal{C}_\varrho} z^{k-m}
\frac{dz}{z} = p^{(m)}(o,x).
\end{eqnarray*}
Since $G(o,x|z)$ is analytic on $\mathcal{C}_\varrho$, we have $|G(o,x|z)|\leq G(o,x|\varrho)$ for all
$|z|=\varrho$. Thus, 
$$
p^{(m)}(o,x) \leq \frac{1}{2\pi}\cdot \varrho^{-m-1}\cdot  G(o,x|\varrho)\cdot  2\pi \varrho = G(o,x|\varrho)\cdot  \varrho^{-m}.
$$
Iterated applications of equations
(\ref{l-equation}) and (\ref{gl-equations}) provide
$$
G(o,x|\varrho) =  G(o,o|\varrho) \,\prod_{k=1}^{\Vert x\Vert} L_{\tau(x_k)}\bigl(o_{\tau(x_k)}, x_k|\xi_i(\varrho)\bigr)
\leq  G(o,o|\varrho) \Bigl( \frac{1}{1-\max_{i\in\calI}
  \xi_i(\varrho)}\Bigr)^{\Vert x \Vert}.
$$
Since $\Vert X_n \Vert\leq n$, we obtain
$$
p^{(m)}(e,X_n) \leq G(o,o|\varrho)\cdot  \Bigl(\frac{1}{1-\max_{i\in\calI} \xi_i(\varrho)}\Bigr)^n \cdot \varrho^{-m}.
$$
\end{proof}
\begin{Lemma}\label{cut-lemma}
Let $(A_n)_{n\in\N}$, $(a_n)_{n\in\N}$, $(b_n)_{n\in\N}$ be sequences of
strictly positive numbers with $A_n=a_n+b_n$. Assume that $\lim_{n\to\infty}
-\frac{1}{n}\log A_n=c \in [0,\infty)$ and that $\lim_{n\to\infty} b_n/q^n
= 0$ for all $q\in(0,1)$. Then $\lim_{n\to\infty} -\frac{1}{n}\log a_n=c$. 
\end{Lemma}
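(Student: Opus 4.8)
The plan is to prove the two one-sided estimates $\liminf_{n\to\infty}\bigl(-\tfrac1n\log a_n\bigr)\ge c$ and $\limsup_{n\to\infty}\bigl(-\tfrac1n\log a_n\bigr)\le c$ separately; together they give the asserted limit.

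The lower estimate is immediate: since all the numbers involved are strictly positive and $A_n=a_n+b_n$, we have $0<a_n<A_n$, hence $-\tfrac1n\log a_n\ge -\tfrac1n\log A_n$ for every $n$. Taking $\liminf$ on both sides and using $-\tfrac1n\log A_n\to c$ yields $\liminf_{n\to\infty}\bigl(-\tfrac1n\log a_n\bigr)\ge c$.

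For the upper estimate --- the substantive part --- fix any $\delta>0$. From $-\tfrac1n\log A_n\to c$ there is $N_1$ with $A_n\ge e^{-(c+\delta)n}$ for all $n\ge N_1$. Since $c\ge 0$, the quantity $e^{-(c+\delta)}$ lies in $(0,1)$, so we may choose $q\in(0,1)$ so small that $r:=q\,e^{c+\delta}<1$. The hypothesis $b_n/q^n\to 0$ then provides $N_2$ with $b_n\le q^n$ for all $n\ge N_2$, so that for every $n\ge\max\{N_1,N_2\}$ we have $b_n/A_n\le q^n/e^{-(c+\delta)n}=r^n\le r<1$, and therefore
$$
a_n=A_n\Bigl(1-\frac{b_n}{A_n}\Bigr)\ge(1-r)\,A_n.
$$
Hence $-\tfrac1n\log a_n\le -\tfrac1n\log(1-r)-\tfrac1n\log A_n$, and letting $n\to\infty$ gives $\limsup_{n\to\infty}\bigl(-\tfrac1n\log a_n\bigr)\le c$. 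Combining the two estimates proves the lemma.

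I do not anticipate a genuine obstacle here; the only point requiring a little care is that $r<1$ can be arranged, which is exactly where the assumption $c\in[0,\infty)$ enters, since it guarantees $e^{-(c+\delta)}>0$ so that a valid $q$ exists. Morally, the content of the lemma is simply that subtracting from $A_n$ a term $b_n$ which decays faster than every geometric sequence cannot change the exponential decay rate.
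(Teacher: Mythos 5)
Your proof is correct. The lower bound is the trivial monotonicity estimate, and in the upper bound all the quantifiers check out: since $c+\delta>0$ you can indeed pick $q<e^{-(c+\delta)}$, the hypothesis gives $b_n\le q^n$ eventually, the convergence $-\frac1n\log A_n\to c$ gives $A_n\ge e^{-(c+\delta)n}$ eventually, hence $b_n/A_n\le r^n$ with $r=qe^{c+\delta}<1$ and $a_n\ge(1-r)A_n$, from which $\limsup_n\bigl(-\frac1n\log a_n\bigr)\le c$ follows for each fixed $\delta$ (the $\delta$ never even needs to be sent to $0$, since $-\frac1n\log(1-r)\to 0$).

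The underlying idea is the same as in the paper --- $b_n$ decays faster than every geometric sequence while $A_n$ decays only at exponential rate $c<\infty$, so $b_n$ is eventually a negligible part of $A_n$ --- but the execution differs. The paper first argues by contradiction (via a subsequence argument, using that $\limsup_n -\frac1n\log A_n$ would otherwise be infinite) that $b_n<a_n$ for all large $n$, and then squeezes $-\frac1n\log a_n$ between $-\frac1n\log(a_n+b_n)$ and $-\frac1n\log\frac12-\frac1n\log(a_n+b_n)$. You instead exploit the finiteness of $c$ quantitatively, comparing $b_n\le q^n$ directly against the lower bound $A_n\ge e^{-(c+\delta)n}$ to get the explicit estimate $a_n\ge(1-r)A_n$. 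Your version avoids the contradiction and the subsequence extraction and yields an effective bound on $b_n/A_n$; the paper's version keeps the comparison purely between $a_n$ and $b_n$ and never introduces the auxiliary parameters $\delta$, $q$, $r$. Both are complete proofs of the lemma.
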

\begin{proof}
Under the made assumptions it can not be that $\liminf_{n\to\infty} a_n/q^n
= 0$ for every \mbox{$q\in(0,1)$.} Indeed, assume that this would hold. Choose any $q>0$. 
Then  there is a subseqence $(a_{n_k})_{k\in\N}$ with $a_{n_k}/q^{n_k}\to
0$. Moreover, there is $N_q\in\N$ such that $a_{n_k},b_{n_k}<q^{n_k}/2$ for all $k\geq N_q$. But this implies
$$
-\frac{1}{n_k}\log(a_{n_k}+b_{n_k}) \geq -\frac{1}{n_k}\log(q^{n_k}) = -\log q.
$$
The last inequality holds for every $q>0$, yielding that $\limsup_{n\to\infty}
-\frac{1}{n}\log A_n=\infty$, a contradiction.
\par
Thus, there is some $N\in\N$ such that $b_n<a_n$ for all $n\geq N$. We get for
all $n\geq N$:
\begin{eqnarray*}
-\frac{1}{n}\log(a_n+b_n) &\leq & -\frac{1}{n}\log(a_n) = 
-\frac{1}{n}\log\Bigl(\frac{1}{2}a_n+\frac{1}{2}a_n\Bigr) \\
&\leq &
-\frac{1}{n}\log\Bigl(\frac{1}{2}a_n+\frac{1}{2}b_n\Bigr) \leq 
 -\frac{1}{n}\log \frac{1}{2} -\frac{1}{n}\log(a_n+b_n).
\end{eqnarray*}
Taking limits yields that $-\frac{1}{n}\log(a_n)$ tends to $c$, since the
leftmost and rightmost side of this inequality chain tend to $c$.
\end{proof}
For the next lemma recall the definition of $K$ from $(\ref{def:uniform-irred})$.
\begin{Lemma}\label{lemma:sum-bounds}
For $n\in\N$, consider the function $f_n:V\to\R$ defined by
$$
f_n(x):=\begin{cases}
-\frac{1}{n}\log \sum_{m=0}^{Kn^2} p^{(m)}(o,x), & \textrm{if } p^{(n)}(o,x)>0,\\
0, & \textrm{otherwise.}
\end{cases}
$$
Then there are constants $d$ and $D$ such that $d\leq f_n(x)\leq D$ for all
$n\in\N$ and $x\in V$.
\end{Lemma}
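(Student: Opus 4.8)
The plan is to squeeze $f_n$ between two constants by bounding $\sum_{m=0}^{Kn^2}p^{(m)}(o,x)$ from below and from above, uniformly in $n$ and $x$. Since $f_n(x)$ is set to $0$ when $p^{(n)}(o,x)=0$, it suffices to produce a lower bound $d\le 0$ and an upper bound $D\ge 0$ valid whenever $p^{(n)}(o,x)>0$; the remaining $x$ are then automatically covered.

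\emph{The upper bound $f_n(x)\le D$.} First I would observe that the walk on $V$ inherits uniform irreducibility from the factors: if $p(x,y)>0$ then, by the definition of $P=\sum_{i\in\calI}\alpha_i\bar P_i$, there are $i\in\calI$, a word $u$ with $\tau(u)\neq i$, and $v,w\in V_i$ with $x=uv$, $y=uw$ and $p_i(v,w)>0$; applying (\ref{def:uniform-irred}) to $v,w$ and lifting the resulting $\le K$-step path through $\bar P_i$ (it stays inside the coset $uV_i$) yields some $k\le K$ with $p^{(k)}(x,y)\ge\alpha_i^{\,k}p_i^{(k)}(v,w)\ge\alpha_{\min}^{\,K}\eps_0=:\eps_1$, where $\alpha_{\min}:=\min_{i\in\calI}\alpha_i$ and $0<\eps_1<1$. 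Now suppose $p^{(n)}(o,x)>0$ and fix a path $o=y_0,y_1,\dots,y_n=x$ with $p(y_{j-1},y_j)>0$ for all $j$. Concatenating the $\le K$-step realizations of the individual edges and using the Chapman--Kolmogorov inequality, we get $p^{(m)}(o,x)\ge\eps_1^{\,n}$ for some $m\le Kn\le Kn^2$, hence $\sum_{m=0}^{Kn^2}p^{(m)}(o,x)\ge\eps_1^{\,n}$ and therefore $f_n(x)\le-\log\eps_1=:D>0$.

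\emph{The lower bound $f_n(x)\ge d$.} A single step changes the block length by at most one, so $p^{(n)}(o,x)>0$ forces $\Vert x\Vert\le n$. Let $\varrho\in(1,R)$ be the number, fixed just before the lemma, with $\xi_i(\varrho)<1$ for all $i\in\calI$. Arguing exactly as in the proof of Lemma~\ref{lemma:prob-bound} --- via (\ref{l-equation}), (\ref{gl-equations}) and analyticity of $G(o,x|\cdot)$ on $\calC_\varrho$ --- one obtains, for every $m\in\N_0$,
$$
p^{(m)}(o,x)\le G(o,x|\varrho)\,\varrho^{-m}\le G(o,o|\varrho)\Bigl(\tfrac{1}{1-\max_{i\in\calI}\xi_i(\varrho)}\Bigr)^{\Vert x\Vert}\varrho^{-m}\le G(o,o|\varrho)\Bigl(\tfrac{1}{1-\max_{i\in\calI}\xi_i(\varrho)}\Bigr)^{n}\varrho^{-m}.
$$
Summing over $m$ and using $\sum_{m\ge 0}\varrho^{-m}=\varrho/(\varrho-1)<\infty$ gives
$$
\sum_{m=0}^{Kn^2}p^{(m)}(o,x)\le\frac{\varrho}{\varrho-1}\,G(o,o|\varrho)\Bigl(\tfrac{1}{1-\max_{i\in\calI}\xi_i(\varrho)}\Bigr)^{n},
$$
so that, using $-\tfrac1n\log C\ge-|\log C|$ for $n\ge1$ and $C>0$,
$$
f_n(x)\ge\log\bigl(1-\max_{i\in\calI}\xi_i(\varrho)\bigr)-\Bigl|\log\Bigl(\tfrac{\varrho}{\varrho-1}\,G(o,o|\varrho)\Bigr)\Bigr|=:d.
$$
Here $d\le 0\le D$, and both bounds hold for all $n\in\N$ and all $x\in V$; this proves the lemma.

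\emph{Where the work is.} Only the upper bound is more than bookkeeping: from the bare positivity $p^{(n)}(o,x)>0$ one has to manufacture a walk of length at most $Kn^2$ from $o$ to $x$ whose probability is at least $\eps_1^{\,n}$ with $\eps_1$ independent of $n$ and $x$, and uniform irreducibility is precisely the tool that delivers this (alternatively one may argue block by block as in the proof of Proposition~\ref{prop:ell-existence}). In both directions the recurring point is to replace $x$ by the cruder datum $\Vert x\Vert\le n$, so that every constant produced ($\eps_1$, $\varrho$, $G(o,o|\varrho)$, $\max_{i}\xi_i(\varrho)$) is genuinely uniform in $n$ and $x$.
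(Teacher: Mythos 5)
Your proof is correct, and for the upper bound it is essentially the paper's argument: from $p^{(n)}(o,x)>0$ one manufactures, via uniform irreducibility, a path from $o$ to $x$ of length at most $Kn\le Kn^2$ with probability at least $\varepsilon^{\,n}$ for a uniform $\varepsilon$, so that one summand already gives $f_n(x)\le-\log\varepsilon$. You are in fact a bit more careful than the paper here: the paper quotes $\varepsilon_0$ from (\ref{def:uniform-irred}) directly, whereas you correctly note that lifting a factor path through $\bar P_i$ costs the weights $\alpha_i$, leading to $\varepsilon_1=\alpha_{\min}^K\varepsilon_0$; this is a harmless but genuine refinement of the constant. For the lower bound you take a different route: the paper simply bounds $\sum_{m=0}^{Kn^2}p^{(m)}(o,x)\le G(o,x|1)\le F(o,x|1)\,G(x,x|1)\le(1-\tfrac1R)^{-1}$, using $F\le1$ and the uniform bound on $G(x,x|1)$ already established in the proof of Corollary \ref{cor:greenRoE}, which gives $f_n(x)\ge-\tfrac1n\log\frac{R}{R-1}$; you instead reuse the exponential estimate of Lemma \ref{lemma:prob-bound} together with $\Vert x\Vert\le n$ and a geometric summation, obtaining a bound of the form $C\cdot(1-\max_i\xi_i(\varrho))^{-n}$ whose logarithm divided by $n$ is still uniformly bounded below. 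Both are valid and uniform in $n$ and $x$; the paper's version is the more economical (one line, constants independent of $\varrho$), while yours has the minor advantage of recycling Lemma \ref{lemma:prob-bound} verbatim and of making the dependence on $\Vert x\Vert\le n$ explicit.
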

\begin{proof}
Assume that $p^{(n)}(o,x)>0$.
Recall from the proof of Corollary \ref{cor:greenRoE} that we have $G(x,x|1)\leq
\bigl(1-\frac{1}{R}\bigr)^{-1}$. Therefore,
$$
\sum_{m=0}^{Kn^2} p^{(m)}(o,x) \leq G(o,x|1) \leq F(o,x|1)\cdot G(x,x|1)  \leq
\frac{1}{1-\frac{1}{R}}, 
$$
that is
$$
f_n(x) \geq -\frac{1}{n} \log \frac{1}{1-\frac{1}{R}}.
$$
For the upper bound, observe that, by uniform irreducibility, $x\in V$ with
$p^{(n)}(o,x)>0$ can be reached from $o$ in
$N_x\leq K\cdot |x| \leq Kn$ steps with a probability of at least $\varepsilon_0^{|x|}$,
where $\varepsilon_0 >0$ from (\ref{def:uniform-irred}) is independent from $x$. Thus, at least one of the summands in $\sum_{m=0}^{Kn^2} p^{(m)}(o,x)$ has a value
greater or equal to $\varepsilon_0^{|x|}\geq \varepsilon_0^{n}$. Thus,
$f_n(x)\leq -\log\varepsilon_0$.
\end{proof}
Now we can state and prove our first main result:
\begin{Th}\label{thm:entropy}
Assume $R>1$. Then the asymptotic entropy exists and is given by
$$
h= \ell_0 \cdot \sum_{g\in V_\ast^\times} l(g)\, \pi\bigl(g,\tau(g)\bigr) =\ell.
$$
\end{Th}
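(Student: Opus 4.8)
The plan is to show that $H_n:=\mathbb{E}\bigl[-\log\pi_n(X_n)\bigr]=\sum_{x\in V}p^{(n)}(o,x)\bigl(-\log p^{(n)}(o,x)\bigr)$ satisfies $H_n/n\to\ell$, and then to match $\ell$ with the displayed formula using Proposition \ref{prop:ell-existence} and Lemma \ref{lemma:tildeW-process}. The bridge between the combinatorial quantity $l(X_n)$, whose asymptotics are already controlled, and $\log p^{(n)}(o,X_n)$ is the Green function: by $(\ref{gl-equations})$ we have $G(o,X_n|1)=G(o,o|1)\cdot L(o,X_n|1)$ with $G(o,o|1)$ a finite constant (as $R>1$), so
$$
-\frac1n\log G(o,X_n|1)=\frac{l(X_n)}{n}-\frac1n\log G(o,o|1)\xrightarrow{\ n\to\infty\ }\ell\qquad\text{almost surely,}
$$
and $\ell\ge0$ by Corollary \ref{cor:greenRoE}. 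Since $G(o,X_n|1)=\sum_{m\ge0}p^{(m)}(o,X_n)$, the first task is to replace this series by the finite sum $\sum_{m=0}^{Kn^2}p^{(m)}(o,X_n)=e^{-nf_n(X_n)}$ appearing in Lemma \ref{lemma:sum-bounds}.

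For the truncation I write $A_n:=G(o,X_n|1)=a_n+b_n$ with $a_n:=\sum_{m=0}^{Kn^2}p^{(m)}(o,X_n)$ and $b_n$ the tail $\sum_{m>Kn^2}p^{(m)}(o,X_n)$. By Lemma \ref{lemma:prob-bound}, $b_n\le G(o,o|\varrho)\bigl(1-\max_{i\in\mathcal I}\xi_i(\varrho)\bigr)^{-n}\sum_{m>Kn^2}\varrho^{-m}$, which is super-exponentially small because the factor $\varrho^{-Kn^2}$ beats any geometric factor; hence $b_n/q^n\to0$ for every $q\in(0,1)$. Applying Lemma \ref{cut-lemma} pathwise on the almost sure event where $-\frac1n\log A_n\to\ell\in[0,\infty)$ gives $-\frac1n\log a_n=f_n(X_n)\to\ell$ almost surely (note $p^{(n)}(o,X_n)>0$ surely). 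By Lemma \ref{lemma:sum-bounds} the variables $f_n(X_n)$ are uniformly bounded, so dominated convergence yields $\mathbb{E}\bigl[f_n(X_n)\bigr]\to\ell$.

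It remains to squeeze $H_n/n$ between two quantities tending to $\ell$. Since $n\le Kn^2$ we have $a_n\ge p^{(n)}(o,X_n)$, hence $f_n(X_n)\le-\frac1n\log p^{(n)}(o,X_n)$; taking expectations gives $\mathbb{E}[f_n(X_n)]\le H_n/n$, so $\liminf_n H_n/n\ge\ell$. For the reverse inequality I use the Gibbs (information) inequality: the function $q_n(x):=\frac1{Kn^2+1}\sum_{m=0}^{Kn^2}p^{(m)}(o,x)$ is a probability measure on $V$ (a Cesàro average of the distributions $\pi_0,\dots,\pi_{Kn^2}$) and $\pi_n\ll q_n$, so
$$
H_n=-\sum_{x\in V}\pi_n(x)\log\pi_n(x)\ \le\ -\sum_{x\in V}\pi_n(x)\log q_n(x)\ =\ \log(Kn^2+1)+n\,\mathbb{E}\bigl[f_n(X_n)\bigr].
$$
Dividing by $n$ and letting $n\to\infty$ gives $\limsup_n H_n/n\le\ell$, so $h=\lim_n H_n/n=\ell$. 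Finally, $(\ref{ell0-limit})$ combined with Proposition \ref{prop:ell-existence} gives $\ell=C_h\cdot\ell_0$, and by $(\ref{Ch-formel})$, $C_h=\sum_{(g,j)\in\mathcal A}l(g)\,\pi(g,j)$; since $\pi(g,j)$ vanishes unless $j=\tau(g)$ (as $(g,j)\in\mathcal A$ forces $g\in V_j^\times$), this equals $\sum_{g\in V_\ast^\times}l(g)\,\pi\bigl(g,\tau(g)\bigr)$, giving the asserted identity $h=\ell_0\sum_{g\in V_\ast^\times}l(g)\,\pi(g,\tau(g))=\ell$.

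The main obstacle is the upper bound $\limsup_n H_n/n\le\ell$: the almost sure convergence only controls the truncated Green sum $a_n$, and to pass from it to the Shannon entropy $H_n$ one needs the (easily overlooked) observation that the Cesàro normalization $q_n$ of the truncated transition probabilities is a genuine probability measure, which is exactly what makes the Gibbs inequality applicable; the remaining steps are bookkeeping with Lemmas \ref{lemma:prob-bound}--\ref{lemma:sum-bounds} and the generating-function identities.
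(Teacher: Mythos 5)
Your proposal is correct and follows essentially the same route as the paper: the identification of $\ell$ with $-\frac1n\log G(o,X_n|1)$, the truncation of the Green sum via Lemmas \ref{lemma:prob-bound} and \ref{cut-lemma}, dominated convergence via Lemma \ref{lemma:sum-bounds}, the Shannon/Gibbs inequality with the Ces\`aro-normalized truncated sum, and the identification $\ell=C_h\ell_0$ from (\ref{ell0-limit}) and (\ref{Ch-formel}). The only cosmetic difference is that you obtain the lower bound $\liminf_n H_n/n\ge\ell$ by taking expectations of $f_n(X_n)\le-\frac1n\log\pi_n(X_n)$, whereas the paper uses the pointwise bound $G(o,X_n|1)\ge\pi_n(X_n)$ together with Fatou's lemma; both rest on the same comparison.
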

\begin{proof}
By (\ref{gl-equations}) we can rewrite $\ell$ as
$$
\ell=\lim_{n\to\infty} -\frac{1}{n} \log L(o,X_n|1) = \lim_{n\to\infty} -\frac{1}{n}
\log \frac{G(o,X_n|1)}{G(o,o|1)} = \lim_{n\to\infty} -\frac{1}{n} \log G(o,X_n|1).
$$
Since 
$$
G(o,X_n|1) = \sum_{m\geq 0} p^{(m)}(o,X_n) \geq p^{(n)}(o,X_n) = \pi_n(X_n),
$$
we have
\begin{equation}\label{equ:liminf-h}
\liminf_{n\to\infty} -\frac{1}{n} \log \pi_n(X_n) \geq \ell.
\end{equation}
The next aim is to prove $\limsup_{n\to\infty} -\frac{1}{n}\E\bigl[\log
\pi_n(X_n)\bigr] \leq \ell$. 
We now apply Lemma \ref{cut-lemma} by setting 
$$
A_n:=\sum_{m\geq 0} p^{(m)}(o,X_n),\ 
a_n:=\sum_{m=0}^{Kn^2} p^{(m)}(o,X_n) \textrm{ and } b_n:=\sum_{m\geq Kn^2+1}
p^{(m)}(o,X_n).
$$
By Lemma \ref{lemma:prob-bound},
$$
b_n\leq \sum_{m\geq Kn^2+1} \frac{G(o,o|\varrho)}{\varrho^{m}}\cdot  \Bigl(\frac{1}{1-\max_{i\in\calI} \xi_i(\varrho)}\Bigr)^n 
= G(o,o|\varrho)\cdot  \Bigl(\frac{1}{1-\max_{i\in\calI} \xi_i(\varrho)}\Bigr)^n \cdot
\frac{\varrho^{-Kn^2-1}}{1-\varrho^{-1}}. 
$$
Therefore, $b_n$ decays faster than any geometric sequence. Applying Lemma
\ref{cut-lemma} yields
$$
\ell=\lim_{n\to\infty}  -\frac{1}{n} \log \sum_{m=0}^{Kn^2}
p^{(m)}(o,X_n)\quad \textrm{almost surely.}
$$
By Lemma \ref{lemma:sum-bounds}, 
we may apply the Dominated Convergence Theorem and get:
\begin{eqnarray*}
\ell & = &  \int \lim_{n\to\infty} -\frac{1}{n} \log \sum_{m=0}^{Kn^2}
p^{(m)}(o,X_n)\, d\Prob \\
&=&
\lim_{n\to\infty} \int -\frac{1}{n} \log \sum_{m=0}^{Kn^2}
p^{(m)}(o,X_n)\, d\Prob\\
&=& \lim_{n\to\infty} -\frac{1}{n} \sum_{x\in V} p^{(n)}(o,x) \log \sum_{m=0}^{Kn^2}
p^{(m)}(o,x).
\end{eqnarray*}
Recall that \textit{Shannon's Inequality} gives
$$
-\sum_{x\in V} p^{(n)}(o,x) \log \mu(x) \geq -\sum_{x\in V} p^{(n)}(o,x) \log p^{(n)}(o,x)
$$
for every finitely supported probability measure $\mu$ on $V$. 
We apply now this inequality by setting $\mu(x):=\frac{1}{Kn^2+1}
\sum_{m=0}^{Kn^2} p^{(m)}(o,x)$:
\begin{eqnarray*}
\ell & \geq & \limsup_{n\to\infty} \frac{1}{n} \sum_{x\in V} p^{(n)}(o,x) \log
(Kn^2+1) -\frac{1}{n} \sum_{x\in V} p^{(n)}(o,x) \log
p^{(n)}(o,x) \\
&=& \limsup_{n\to\infty} -\frac{1}{n}\int \log \pi_n(X_n)\, d\Prob.
\end{eqnarray*}
Now we can conclude with Fatou's Lemma:
\begin{eqnarray}
\ell  \leq \int \liminf_{n\to\infty} \frac{-\log \pi_n(X_n)}{n} d\Prob & \leq &
\liminf_{n\to\infty} \int \frac{-\log \pi_n(X_n)}{n} d\Prob \nonumber\\
& \leq &
\limsup_{n\to\infty} \int \frac{-\log \pi_n(X_n)}{n} d\Prob \leq \ell.\label{eq:entropy-final}
\end{eqnarray}
Thus, $\lim_{n\to\infty} -\frac{1}{n} \E\bigl[\log \pi_n(X_n)\bigr]$ exists
and the limit equals $\ell$. The rest follows from (\ref{ell0-limit}) and (\ref{Ch-formel}).
\end{proof}
We now give another formula for the asymptotic entropy which shows that
it is strictly positive.
\begin{Th}\label{th:entropy2}
Assume $R>1$. Then the asymptotic entropy is given by
$$
h= \ell_0 \cdot \sum_{g,h\in V_{\ast}^\times} -\pi\bigl(g,\tau(g)\bigr)\,q\bigl((g,\tau(g)),(h,\tau(h))\bigr)\,\log
q\bigl((g,\tau(g)),(h,\tau(h))\bigr) > 0.
$$
\end{Th}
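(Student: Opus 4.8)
The plan is to leverage Theorem~\ref{thm:entropy}, which already gives $h=\ell=\ell_0\cdot C_h$ with $C_h=\sum_{g\in V_\ast^\times}l(g)\,\pi\bigl(g,\tau(g)\bigr)$; writing $S$ for the double sum in the statement (so that the claim is $h=\ell_0\cdot S>0$), it thus suffices to show $S=C_h$ and $S>0$. The algebraic key is a simplification of the prefactor in Lemma~\ref{lemma:tildeW-process}: setting $\phi_i:=\alpha_i(1-\xi_i)/\xi_i>0$ one checks immediately that $\frac{\alpha_j}{\alpha_i}\frac{\xi_i}{\xi_j}\frac{1-\xi_j}{1-\xi_i}=\phi_j/\phi_i$, so that $q\bigl((g,i),(h,j)\bigr)=\tfrac{\phi_j}{\phi_i}L_j(o_j,h|\xi_j)$ for $i\neq j$, and hence
$$
-\log q\bigl((g,i),(h,j)\bigr)=\log\phi_i-\log\phi_j-\log L_j(o_j,h|\xi_j)=\log\phi_{\tau(g)}-\log\phi_{\tau(h)}+l(h).
$$

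First I would substitute this identity into $S$ and split it into three sums; this is legitimate because $\sum_{g,h}\pi(g,\tau(g))q(\ldots)\bigl(|\log\phi_{\tau(g)}|+|\log\phi_{\tau(h)}|+|l(h)|\bigr)<\infty$, the first two terms being controlled by $\max_{i\in\calI}|\log\phi_i|<\infty$ together with $\sum_{(h,j)}q((g,i),(h,j))=1$, and the third by the integrability $\sum_{g\in V_\ast^\times}|l(g)|\,\pi(g,\tau(g))<\infty$ established in the proof of Proposition~\ref{prop:ell-existence}. In the $\log\phi_{\tau(g)}$-sum I collect by $i=\tau(g)$, use $\sum_{(h,j)}q((g,i),(h,j))=1$, and then $\sum_{g\in V_i^\times}\pi(g,i)=\nu(i)$ --- which follows from $\pi(g,i)=\sum_{j}\nu(j)q((\ast,j),(g,i))$, from $\sum_{g\in V_i^\times}q((\ast,j),(g,i))=\hat q(j,i)$, and from $\nu$ being $\hat q$-invariant --- obtaining $\sum_{i\in\calI}\nu(i)\log\phi_i$. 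In the $\log\phi_{\tau(h)}$-sum I collect by $j=\tau(h)$ and apply the $q$-invariance of $\pi$ (Lemma~\ref{lemma:tildeW-process}), getting $-\sum_{j\in\calI}\nu(j)\log\phi_j$; the two cancel. In the $l(h)$-sum the same $q$-invariance yields $\sum_{h\in V_\ast^\times}l(h)\,\pi(h,\tau(h))=C_h$. Hence $S=C_h$ and $h=\ell_0\cdot S$.

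It remains to prove $S>0$. Because the entropy of the row $q\bigl((g,i),\cdot\bigr)$ does not depend on $g$, one may rewrite $S=\sum_{i\in\calI}\nu(i)\,H_i$, where $H_i\ge 0$ denotes the entropy of the probability distribution $q\bigl((\ast,i),\cdot\bigr)$ on $\calA$. Since $\nu(i)>0$ for every $i$, it suffices to produce one $i$ with $H_i>0$, i.e.\ with $q\bigl((\ast,i),\cdot\bigr)$ not a point mass. From $q\bigl((\ast,i),(h,j)\bigr)=\tfrac{\phi_j}{\phi_i}L_j(o_j,h|\xi_j)$ the support of this distribution is exactly $\{(h,j):j\neq i,\ h\in V_j^\times\}$: indeed $\phi_i,\phi_j>0$, and $L_j(o_j,h|\xi_j)>0$ for every $h\in V_j^\times$ since $h$ is reachable from $o_j$ inside $V_j$ and truncating any such path at its last visit to $o_j$ produces a path from $o_j$ to $h$ that avoids $o_j$ afterwards. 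If $r\ge 3$ this support has at least two elements for each $i$; if $r=2$, it is a singleton for a given $i$ only when $|V_j^\times|=1$ for the unique $j\neq i$, and imposing this for both $i=1$ and $i=2$ would force $|V_1|=|V_2|=2$, the case excluded by hypothesis. In every admissible case some $H_i>0$, hence $S>0$, and since $\ell_0>0$ we conclude $h=\ell_0\cdot S>0$.

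I expect the routine part to be the formula: once the decomposition $-\log q=\log\phi_{\tau(g)}-\log\phi_{\tau(h)}+l(h)$ is noticed, it follows mechanically from two applications of stationarity, the $\phi$-terms telescoping away. The only step requiring genuine care is the positivity, where one must check that the chain $\bigl(\widetilde{\mathbf W}_k,\t_k\bigr)_{k\in\N}$ is genuinely non-deterministic; this is exactly where the standing assumption $r\ge 2$, the exclusion of $r=2=|V_1|=|V_2|$, and the reachability hypothesis on each $V_i$ all come into play.
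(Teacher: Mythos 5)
Your proposal is correct, but it takes a genuinely different route from the paper. The paper argues pathwise: it substitutes $X_{\mathbf{e}_k}$ into the identity $l(x)=-\log\prod_{j\ge 2}q(\cdot,\cdot)+\textrm{boundary terms}$, applies the ergodic theorem for the positive recurrent chain $\bigl(\widetilde{\mathbf{W}}_k,\t_k\bigr)$ to the time averages of $-\log q$, uses $k/\e{k}\to\ell_0$, and then needs a separate truncation/contradiction argument to rule out the possibility that the double sum is infinite; positivity of the double sum is essentially asserted there. You instead work entirely at the level of the stationary measure: using the same algebraic decomposition $-\log q\bigl((g,i),(h,j)\bigr)=\log\phi_i-\log\phi_j+l(h)$ with $\phi_i=\alpha_i(1-\xi_i)/\xi_i$, you show by two applications of the invariance of $\pi$ (plus $\sum_{g\in V_i^\times}\pi(g,i)=\nu(i)$) that the double sum telescopes to $C_h=\sum_g l(g)\,\pi(g,\tau(g))$, and then simply invoke Theorem \ref{thm:entropy}, $h=\ell_0\,C_h$. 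This buys two things the paper handles less directly: finiteness of the double sum comes for free from $\int|l|\,d\pi<\infty$ (already established in Proposition \ref{prop:ell-existence}), so no $h'=\infty$ contradiction argument is needed, and positivity gets an explicit proof via the row-entropy decomposition $S=\sum_i\nu(i)H_i$ together with the excluded case $r=2=|V_1|=|V_2|$. What the paper's route buys in exchange is the pathwise statement behind the remark $h=\ell_0\cdot h_Q$, i.e.\ the almost sure convergence of the time averages themselves. Two tiny points you should make explicit: $\nu(i)>0$ for every $i$ (immediate since $(1-\xi_i)G_i(o_i,o_i|\xi_i)<1$, because $p_i(o_i,o_i)=0$, or since all $\hat q(j,i)>0$ on the finite set $\calI$), and $V_j^\times\neq\emptyset$ for every $j$ (forced by the standing assumptions on $P_j$), which your support argument for $H_i>0$ uses.
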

\textit{Remarks:} Observe that the sum on the right hand side of Theorem \ref{th:entropy2} equals the entropy rate (for positive recurrent Markov chains) of
$\bigl(\widetilde{\mathbf{W}}_k,\t_k\bigr)_{k\in\N}$, which is defined by the almost sure
constant limit
$$
h_Q:=\lim_{n\to\infty} -\frac{1}{n}\log
\mu_n\bigl((\widetilde{\mathbf{W}}_1,\tau_1),\dots,(\widetilde{\mathbf{W}}_n,\tau_n)\bigr),
$$ 
where
$\mu_n\bigl((g_1,\t_1),\dots,(g_n,\t_n)\bigr)$ is the joint distribution of
$\bigl((\widetilde{\mathbf{W}}_1,\t_1),\dots,(\widetilde{\mathbf{W}}_n,\t_n)\bigr)$. That is,
$h=\ell\cdot h_Q$. For more details, we refer e.g. to Cover and Thomas
\cite[Chapter 4]{cover-thomas}.
\par
At this point it is essential that we have defined the length function $l(\cdot)$
with the help of the functions $L(x,y|z)$ and not by the Greenian metric.

\begin{proof}
For a moment let be $x=x_1\dots x_n\in V$. Then:
\begin{eqnarray}
l(x)
& = & -\log \prod_{j=1}^{n} L_{\tau(x_j)}\bigl(o_{\tau(x_{j})}, x_j|\xi_{\tau(x_j)}\bigr)\nonumber
\\
&=&  
-\log \prod_{j=2}^{n} \frac{\alpha_{\tau(x_j)}}{\alpha_{\tau(x_{j-1})}}
\frac{\xi_{\tau(x_{j-1})}}{\xi_{\tau(x_j)}}
\frac{1-\xi_{\tau(x_{j})}}{1-\xi_{\tau(x_{j-1})}}
L_{\tau(x_j)}\bigl(o_{\tau(x_j)}, x_j|\xi_{\tau(x_j)}\bigr)\nonumber \\
&&\hspace{3cm}
-\log L_{\tau(x_1)}\bigl(o_{\tau(x_{1})}, x_1|\xi_{\tau(x_1)}\bigr)
+\log
\frac{\xi_{\tau(x_1)}\,\alpha_{\tau(x_n)}\,(1-\xi_{\tau(x_n)})}{\alpha_{\tau(x_1)}\, \xi_{\tau(x_n)}\,(1-\xi_{\tau(x_1)})}\nonumber\\
&=& 
-\log \prod_{j=2}^{n} q\bigl(( x_{j-1},\tau(x_{j-1})),(
x_j,\tau(x_j))\bigr) \nonumber\\
&& \hspace{3cm} -\log L_{\tau(x_1)}\bigl(o_{\tau(x_{1})},x_1|\xi_{\tau(x_1)}\bigr) + 
\log
\frac{\xi_{\tau(x_1)}\,\alpha_{\tau(x_n)}\,(1-\xi_{\tau(x_n)})}{\alpha_{\tau(x_1)}\, \xi_{\tau(x_n)}\,(1-\xi_{\tau(x_1)})}.\label{q-entropy}
\end{eqnarray}
We now replace $x$ by $X_{\e{k}}$ in the last equation: since $l(X_n)/n$ tends to $h$
almost surely, the subsequence $\bigl(l(X_{\e{k}})/\e{k}\bigr)_{k\in\N}$
converges also to $h$.
Since $\min_{i\in\calI} \xi_i >0$ and $\max_{i\in\calI} \xi_i <1$, we get 
$$
\frac{1}{\e{k}} \log \frac{\xi_{\tau(x_1)}\,\alpha_{\tau(x_k)}\,(1-\xi_{\tau(x_k)})}{\alpha_{\tau(x_1)}\, \xi_{\tau(x_k)}\,(1-\xi_{\tau(x_1)})}
\xrightarrow{k\to\infty} 0 \quad \textrm{ almost surely}, 
$$ 
where $x_1:=X_{\mathbf{e}_1}$ and $x_k:=\widetilde{\mathbf{W}}_k = \widetilde{X}_{\mathbf{e}_k}$.
By positive recurrence of $\bigl(\widetilde{\mathbf{W}}_k,\t_k\bigr)_{k\in\N}$, an
application of the ergodic theorem yields
\begin{eqnarray*}
&&-\frac{1}{k} \log \prod_{j=2}^{k}
q\bigl((\widetilde{\mathbf{W}}_{j-1},\t_{j-1}),(\widetilde{\mathbf{W}}_{j},\t_j)\bigr) \\
&\xrightarrow{n\to\infty}& 
h':=-\sum_{\substack{g,h\in V_{\ast}^\times;\\ \tau(g)\neq \tau(h)}}  \pi\bigl(g,\tau(g)\bigr)\,q\bigl((g,\tau(g)),(h,\tau(h)\bigr) \log q\bigl((g,\tau(g)),(h,\tau(h)\bigr)>0 \ \textrm{a.s.},
\end{eqnarray*}
whenever $h'<\infty$. Obviously, for every $x_1\in V_\ast^\times$
$$
\lim_{k\to\infty} -\frac{1}{\e{k}} \log
L_{\tau(x_1)}\bigl(o_{\tau(x_1)},x_1|\xi_{\tau(x_1)}\bigr)=0 \quad
\textrm{almost surely.}
$$
Since $\lim_{k\to\infty} k/\e{k}=\ell_0$  we get
$$
h = \lim_{k\to\infty} \frac{l\bigl(X_{\e{k}}\bigr)}{\e{k}} = 
\lim_{k\to\infty} \frac{l\bigl(X_{\e{k}}\bigr)}{k}\frac{k}{\e{k}} = h' \cdot \ell_0,
$$ 
whenever $h'<\infty$.
In particular, $h>0$ since $\ell_0>0$ by \cite[Section 4]{gilch:07}.
\par
It remains to show that
it cannot be that $h'=\infty$. For this purpose, assume now $h'=\infty$. Define
for $N\in\N$ the function 
$h_N: \bigl(V_\ast^\times\bigr)^2 \to \R$ by
$$
h_N(g,h):= N \land \bigl(-\log q\bigl((g,\tau(g)),(h,\tau(h))\bigr).
$$
Then 
\begin{eqnarray*}
&&-\frac{1}{k} \sum_{j=2}^k \log h_N  \bigl(\widetilde X_{\e{j-1}},\widetilde
X_{\e{j}}\bigr) \\
&\xrightarrow{k\to\infty}&
h_N':=-\sum_{\substack{g,h\in V_\ast^\times,\\ \tau(g)\neq \tau(h)}} \pi\bigl(g,\tau(g)\bigr)\,q\bigl((g,\tau(g)),(h,\tau(h))\bigr) \log h_N(g,h) \quad \textrm{ almost surely}.
\end{eqnarray*}
Observe that $h_N'\to\infty$ as $N\to\infty$.
Since $h_N(g,h)\leq -\log q\bigl((g,\tau(g)),(h,\tau(h))\bigr)$ and $h'=\infty$ by assumption there is for every
$M\in\R$ and almost every trajectory of $\bigl(\widetilde{\mathbf{W}}_k\bigr)_{k\in\N}$
an almost surely finite random time $\mathbf{T_q}\in\N$ such that for all $k\geq \mathbf{T_q}$
\begin{equation}\label{M-inequality}
-\frac{1}{k} \sum_{j=2}^k \log q\bigl((\widetilde{\mathbf{W}}_{j-1},\t_{j-1}),(\widetilde{\mathbf{W}}_j,\t_j)\bigr)
> M.
\end{equation}
On the other hand side there is for every $M>0$, every small $\eps >0$ and almost every
trajectory an almost surely finite random time $\mathbf{T_L}$ such that for all $k\geq \mathbf{T_L}$
\begin{eqnarray*}
&&-\frac{1}{\e{k}} \sum_{j=1}^k \log
L_{\tau(X_{\e{j}})}\bigl(o_{\tau(X_{\e{j}})},\widetilde X_{\e{j}}|\xi_{\tau(X_{\e{j}})}\bigr) \in (h-\eps,
h+\eps)\quad \textrm{ and}\\
&& -\frac{1}{\e{k}} \sum_{j=2}^k \log q\bigl((\widetilde
X_{\e{j-1}},\t_{j-1}),(\widetilde X_{\e{j}},\t_j)\bigr) \\
&=&
 -\frac{k}{\e{k}}\frac{1}{k} \sum_{j=2}^k \log q\bigl((\widetilde X_{\e{j-1}},\t_{j-1}),(\widetilde X_{\e{j}},\t_j)\bigr)
> \ell_0\cdot M.
\end{eqnarray*}
Furthermore, since $\min_{i\in\calI}\xi_i>0$ and $\max_{i\in\calI} \xi_i<1$
there is an almost surely finite random time $\mathbf{T}_\eps \geq
\mathbf{T_L}$ such that for all $k\geq \mathbf{T}_\eps$ and all
$x_1=X_{\mathbf{e}_1}$ and $x_k=\widetilde{X}_{\mathbf{e}_k}$
\begin{eqnarray*}
&&-\frac{1}{\e{k}}\log
\frac{\xi_{\tau(x_1)}\,\alpha_{\tau(x_k)}\,(1-\xi_{\tau(x_k)})}{\alpha_{\tau(x_1)}\,
  \xi_{\tau(x_k)}\,(1-\xi_{\tau(x_1)})} \in (-\eps,\eps)
 \quad \textrm{ and } \\
&&
\frac{1}{\e{k}} \log
L_{\tau(x_1)}\bigl(o_{\tau(x_1)},x_1|\xi_{\tau(x_1)}\bigr) \in (-\eps,\eps).
\end{eqnarray*}
Choose now $M>(h+3\eps)/\ell_0$. Then we get the desired
contradiction, when we substitute in equality (\ref{q-entropy}) the vertex $x$
by $X_{\e{k}}$ with $k\geq \mathbf{T}_\eps$, divide by $\e{k}$ on both sides and see that the
left side is in $(h-\eps,h+\eps)$ and the rightmost side is bigger than
$h+\eps$. This finishes the proof of Theorem \ref{th:entropy2}.
\end{proof}
\begin{Cor}\label{cor:entropy2}
Assume $R>1$. Then we have for almost every path of the random walk $(X_n)_{n\in\N_0}$
$$
h=\liminf_{n\to\infty} -\frac{\log \pi_n(X_n)}{n}.
$$
\end{Cor}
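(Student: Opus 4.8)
The plan is to sandwich $\liminf_{n\to\infty}\bigl(-\frac{1}{n}\log\pi_n(X_n)\bigr)$ between $h$ and $h$: from below almost surely, from above in mean, and then conclude by positivity. Write $Y_n:=-\frac{1}{n}\log\pi_n(X_n)$; since the point $X_n$ is reached from $o$ in exactly $n$ steps we have $\pi_n(X_n)=p^{(n)}(o,X_n)\in(0,1]$, so each $Y_n$ is a genuine nonnegative random variable.

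The lower bound is already in hand: equation (\ref{equ:liminf-h}) in the proof of Theorem \ref{thm:entropy} gives $\liminf_{n\to\infty}Y_n\ge\ell$ almost surely, and $\ell=h$ by Theorem \ref{thm:entropy}. (Recall this comes from $\pi_n(X_n)\le G(o,X_n|1)$ together with the almost sure convergence $-\frac{1}{n}\log G(o,X_n|1)\to\ell$ supplied by Proposition \ref{prop:ell-existence}.) For the bound from above I would apply Fatou's Lemma, which is legitimate because $Y_n\ge 0$: $\E\bigl[\liminf_{n\to\infty}Y_n\bigr]\le\liminf_{n\to\infty}\E[Y_n]$. By Theorem \ref{thm:entropy} the asymptotic entropy exists, i.e.\ $\E[Y_n]=-\frac{1}{n}\E\bigl[\log\pi_n(X_n)\bigr]\to h$; hence $\E\bigl[\liminf_{n\to\infty}Y_n\bigr]\le h$. (This is in fact precisely the first inequality of the chain (\ref{eq:entropy-final}).)

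Combining the two facts, the random variable $Z:=\liminf_{n\to\infty}Y_n-h$ satisfies $Z\ge 0$ almost surely and $\E[Z]\le 0$, hence $Z=0$ almost surely, which is the claim. I do not expect a real obstacle here, since all the substance lives in Theorem \ref{thm:entropy}; the one point to be careful about is that (\ref{equ:liminf-h}) is an almost sure statement and not merely a statement about expectations, which holds because it is inherited from the pathwise limit $-\frac{1}{n}\log G(o,X_n|1)\to\ell$ of Proposition \ref{prop:ell-existence}.
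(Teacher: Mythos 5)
Your proposal is correct and is essentially the paper's own argument: the almost sure lower bound from (\ref{equ:liminf-h}) combined with the fact that $\int \liminf_n\bigl(-\frac{1}{n}\log\pi_n(X_n)\bigr)\,d\Prob=\ell=h$, which the paper extracts from the chain (\ref{eq:entropy-final}) (your Fatou step is exactly the second inequality there, not the first --- a labelling quibble only), forces the nonnegative difference to vanish almost surely. Nothing further is needed.
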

\begin{proof}
Recall Inequality (\ref{equ:liminf-h}). Integrating both sides of this
inequality yields together with the inequality chain (\ref{eq:entropy-final})
that
$$
\int \liminf_{n\to\infty}  -\frac{\log \pi_n(X_n)}{n} -h  \,d\Prob = 0,
$$
providing that $h=\liminf_{n\to\infty} -\frac{1}{n} \log \pi_n(X_n)$ for almost every realisation of the random walk.
\end{proof}
The following lemma gives some properties concerning general measure theory:
\begin{Lemma}
\label{lemma:measuretheory}
Let $(Z_n)_{n\in\N_0}$ be a sequence of non-negative random variables and
\mbox{$0<c\in\R$.} Suppose that $\liminf_{n\to\infty} Z_n \geq c$ almost surely and
$\lim_{n\to\infty} \E[Z_n] = c$. Then the following holds:
\begin{enumerate}
\item $Z_n \xrightarrow{\Prob} c$, that is, $Z_n$ converges in probability to $c$.
\item If $Z_n$ is uniformly bounded then $Z_n\xrightarrow{L_1} c$, that is, $\int \bigl| Z_n-c\bigr| d\Prob \to 0$ as $n\to\infty$.
\end{enumerate}
\end{Lemma}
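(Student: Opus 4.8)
The plan is to reduce both claims to the bounded convergence theorem by splitting $Z_n-c$ into positive and negative parts. Set $W_n:=(Z_n-c)^+=\max\{Z_n-c,0\}$ and $Y_n:=(c-Z_n)^+=\max\{c-Z_n,0\}$, so that $Z_n-c=W_n-Y_n$ and $|Z_n-c|=W_n+Y_n$. The one genuinely useful observation is that the negative part comes dominated for free: since $Z_n\geq 0$ we have $0\leq Y_n\leq c$ pointwise, and the hypothesis $\liminf_{n\to\infty}Z_n\geq c$ a.s.\ says precisely that for every $\eps>0$ one eventually has $Z_n>c-\eps$, i.e.\ $Y_n\to 0$ almost surely. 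Bounded convergence (dominating function the constant $c$) then gives $\E[Y_n]\to 0$.

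Next I would handle the positive part. Because $\E[Z_n]\to c\in\R$, the expectations $\E[Z_n]$ are finite for large $n$, and since $0\leq W_n\leq Z_n$ also $\E[W_n]<\infty$; taking expectations in $Z_n-c=W_n-Y_n$ gives $\E[W_n]=(\E[Z_n]-c)+\E[Y_n]$, whose right-hand side tends to $0$ by hypothesis and by the previous step. Hence $\E[W_n]\to 0$. For part (1), Markov's inequality now yields, for each fixed $\eps>0$,
$$
\Prob\bigl[Z_n>c+\eps\bigr]=\Prob\bigl[W_n>\eps\bigr]\leq\frac{\E[W_n]}{\eps}\xrightarrow{n\to\infty}0,
$$
while $\Prob[Z_n<c-\eps]=\Prob[Y_n>\eps]\to 0$ because $Y_n\to 0$ almost surely. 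Adding the probabilities of these two (disjoint) events gives $\Prob[|Z_n-c|>\eps]\to 0$, which is exactly convergence in probability.

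For part (2) there are two equally short ways to finish, and I would mention both. First, the work above already shows $\E[|Z_n-c|]=\E[W_n]+\E[Y_n]\to 0$, so the $L_1$ convergence in fact holds \emph{without} the boundedness assumption. Alternatively, staying closer to the stated hypothesis, if $0\leq Z_n\leq B$ then $|Z_n-c|\leq B+c=:B'$ and, splitting according to whether $|Z_n-c|\leq\eps$,
$$
\E\bigl[|Z_n-c|\bigr]\leq\eps+B'\,\Prob\bigl[|Z_n-c|>\eps\bigr],
$$
so letting $n\to\infty$ and then $\eps\downarrow 0$ gives $\E[|Z_n-c|]\to 0$. I do not anticipate a real obstacle here; the only point to get right is the remark that $Y_n$ is bounded by the constant $c$ solely because the $Z_n$ are non-negative, which is what makes bounded convergence applicable to the negative part and drives the whole argument.
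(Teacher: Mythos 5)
Your proof is correct, and it takes a genuinely different route from the paper's. You decompose $Z_n-c$ into $W_n=(Z_n-c)^+$ and $Y_n=(c-Z_n)^+$, observe that $0\leq Y_n\leq c$ thanks to $Z_n\geq 0$ and that $\liminf_n Z_n\geq c$ a.s.\ forces $Y_n\to 0$ a.s., apply bounded convergence to get $\E[Y_n]\to 0$, deduce $\E[W_n]=(\E[Z_n]-c)+\E[Y_n]\to 0$, and finish part (1) with Markov's inequality on $W_n$ together with $\Prob[Y_n>\eps]\to 0$. The paper instead proves part (1) by a direct $\delta_1,\delta_2,\eps$ manipulation: it bounds $\int Z_n\,d\Prob$ from below by $\Prob[Z_n>c-\delta_1](c-\delta_1)+\Prob[Z_n>c+\eps](\eps+\delta_1)$, compares with the upper bound $c+\delta_1$, and lets the parameters shrink; for part (2) it then splits $\int|Z_n-c|\,d\Prob$ over the events $[|Z_n-c|\leq\eps]$ and $[|Z_n-c|>\eps]$, using the uniform bound $M$ and the convergence in probability just established. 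Your approach buys two things: it is structurally cleaner (the only analytic inputs are bounded convergence and Markov), and it shows that $\E[|Z_n-c|]=\E[W_n]+\E[Y_n]\to 0$ already follows from the stated hypotheses, so the uniform boundedness in part (2) is superfluous -- a Scheff\'e-type strengthening of the lemma (which would, incidentally, let one drop the assumption $p(x,y)\geq c_0$ in the second part of Corollary \ref{cor:entropy3}). The paper's argument, by contrast, only uses the boundedness where stated and stays entirely elementary, but proves strictly less. The one point you rightly flag, and which is indeed the crux, is that $Y_n$ is dominated by the constant $c$ solely because $Z_n\geq 0$; with that noted, your argument is complete and correct.
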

\begin{proof}
First, we prove convergence in probability of $(Z_n)_{n\in\N_0}$. For every $\delta_1>0$, there is some index $N_{\delta_1}$ such that for all
$n\geq N_{\delta_1}$
$$
\int Z_n\,d\Prob \in (c-\delta_1,c+\delta_1).
$$
Furthermore, due to the above made assumptions on $(Z_n)_{n\in\mathbb{N}_0}$ there is for every $\delta_2>0$ some index $N_{\delta_2}$ such
that for all $n\geq N_{\delta_2}$
\begin{equation}\label{equ:stochconv1}
\Prob[Z_n>c-\delta_1] > 1-\delta_2.
\end{equation}
Since $c=\lim_{n\to\infty} \int Z_n\,d\Prob$ it must be that
for every arbitrary but fixed $\eps>0$, every $\delta_1<\eps$ and for all $n$ big enough
$$
\Prob\bigl[Z_n>c-\delta_1\bigr]\cdot (c-\delta_1) + 
\Prob\bigl[Z_n>c +\eps\bigr]\cdot (\eps+\delta_1)
\leq \int Z_n\,d\Prob 
\leq c+\delta_1,
$$
or equivalently,
$$
\Prob\bigl[Z_n>c +\eps\bigr] \leq
\frac{c+\delta_1 -\Prob\bigl[Z_n>c-\delta_1\bigr]\cdot (c-\delta_1)}{\eps+\delta_1}.
$$
Letting $\delta_2 \to 0$ we get
$$
\limsup_{n\to\infty} \Prob\bigl[Z_n>c +\eps\bigr] \leq \frac{2\delta_1}{\eps+\delta_1}.
$$
Since we can choose $\delta_1$ arbitrarily small we get
$$
\Prob\bigl[Z_n>c +\eps\bigr] \xrightarrow{n\to\infty} 0
\quad \textrm{ for all } \eps>0. 
$$
This yields convergence in probability of $Z_n$ to $c$.
\par
In order to prove the second part of the lemma we define for any small $\eps>0$ and
$n\in\N$ the events
$$
A_{n,\eps}:=\bigl[|Z_n-c|\leq \eps\bigr]
\ \textrm{ and } B_{n,\eps}:=\bigl[|Z_n-c|> \eps\bigr].
$$
For arbitrary but fixed $\eps>0$, convergence in probability of
$Z_n$ to $c$ gives an integer  $N_\eps\in\N$ such that $\Prob[B_{n,\eps}]<\eps$ for all $n\geq
N_\eps$. Since $0\leq Z_n \leq M$ is assumed to be uniformly bounded, we get for $n\geq N_\eps$:
$$
\int |Z_n -c|\, d\Prob 
 = \int_{A_{n,\eps}} |Z_n -c|\, d\Prob+
\int_{B_{n,\eps}} |Z_n -c|\, d\Prob 
\leq  \eps + \eps\, (M+c) \xrightarrow{\eps\to 0} 0. 
$$
Thus, we have proved the second part of the lemma. 
\end{proof}
We can apply the last lemma immediately to our setting:
\begin{Cor}\label{cor:entropy3}
Assume $R>1$. Then we have the following types of convergence: 
\begin{enumerate}
\item Convergence in probability:
$$
-\frac{1}{n}\log \pi_n(X_n) \xrightarrow{\Prob} h.
$$ 
\item Assume that there is $c_0>0$ such that $p(x,y)\geq c_0$ whenever
$p(x,y)>0$. Then:
$$
-\frac{1}{n}\log \pi_n(X_n) \xrightarrow{L_1} h.
$$
\end{enumerate}
\end{Cor}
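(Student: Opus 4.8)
The plan is to derive both statements as immediate consequences of Lemma \ref{lemma:measuretheory}, applied with $Z_n:=-\frac{1}{n}\log\pi_n(X_n)$ and $c:=h$. First I would verify the two standing hypotheses of that lemma. Non-negativity of $Z_n$ is clear, since $\pi_n(X_n)=p^{(n)}(o,X_n)\le 1$. The almost sure lower bound $\liminf_{n\to\infty}Z_n\ge h$ is precisely Inequality (\ref{equ:liminf-h}) combined with the identity $\ell=h$ established in Theorem \ref{thm:entropy}. The convergence $\lim_{n\to\infty}\E[Z_n]=h$ is exactly the existence-of-entropy part of Theorem \ref{thm:entropy}. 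Finally, $h>0$ by Theorem \ref{th:entropy2}, so $c\in(0,\infty)$ as the lemma requires. Part (1) of the corollary then follows directly from part (1) of Lemma \ref{lemma:measuretheory}.

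For part (2) the only additional ingredient is uniform boundedness of the sequence $(Z_n)_{n\in\N_0}$. Here I would use the hypothesis $p(x,y)\ge c_0$ whenever $p(x,y)>0$ as follows: the realized trajectory $X_0,X_1,\dots,X_n$ is itself a path from $o$ to $X_n$ of length exactly $n$ along which every single transition probability is at least $c_0$, so $\pi_n(X_n)=p^{(n)}(o,X_n)\ge c_0^{\,n}$. Hence $0\le Z_n\le-\log c_0$ for every $n$, so $(Z_n)$ is uniformly bounded, and part (2) of Lemma \ref{lemma:measuretheory} delivers the $L_1$-convergence.

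I do not expect a genuine obstacle here: the corollary is essentially a packaging of results already proved. The only point deserving a moment's attention is the pathwise lower bound $\pi_n(X_n)\ge c_0^{\,n}$, and even that reduces to the observation that the random path itself witnesses $p^{(n)}(o,X_n)>0$ with weight at least $c_0^{\,n}$.
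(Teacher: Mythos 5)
Your proposal is correct and follows the paper's own route exactly: both apply Lemma \ref{lemma:measuretheory} with $Z_n=-\frac{1}{n}\log\pi_n(X_n)$ and $c=h$, using (\ref{equ:liminf-h}) together with Theorems \ref{thm:entropy} and \ref{th:entropy2} for the hypotheses, and the bound $\pi_n(X_n)\geq c_0^{\,n}$ (witnessed by the realized trajectory) for the uniform boundedness needed in part (2). The only difference is that you spell out details the paper leaves implicit.
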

\begin{proof}
Setting $Z_n=-\frac{1}{n}\log \pi_n(X_n)$ and applying Lemma
\ref{lemma:measuretheory} yields the claim. Note that the assumption
$p(x,y)\geq c_0$ yields $0\leq \frac{-\log \pi_n(X_n)}{n}\leq -\log c_0$.
\end{proof}
The assumption of the second part of the last corollary is obviously satisfied
if we consider free products of \textit{finite} graphs.
\par
The reasoning in our proofs for existence of the entropy and its different properties (in particular, the reasoning in Section \ref{subsec:entropy}) is very similar to
 the argumentation in \cite{gilch-mueller}. However, the structure of free products of graphs is more complicated than in the case of directed covers as
 considered in \cite{gilch-mueller}. We outline the main differences to the reasoning in the aforementionend article. First, in \cite{gilch-mueller} a very
 similar rate of escape (compare \mbox{\cite[Theorem 3.8]{gilch-mueller}} with Proposition \ref{prop:ell-existence}) is considered, which arises from a length
 function induced by last visit generating functions. While the proof of existence of the rate of escape in \cite{gilch-mueller} is easy to check, we have
 to make more effort in the case of free products, since $-\log L_i(o_i,x|1)$ is not necessarily bounded for $x\in V_i$. Additionally, one has to study the various ingridients of the
 proof more carefully, since non-trivial loops are possible in our setting in contrast to random walks on trees. Secondly, in \cite{gilch-mueller} the
 invariant measure $\pi\bigl(g,\tau(g)\bigr)$ of our proof collapses to $\nu\bigl(\tau(g)\bigr)$, that is, in \cite{gilch-mueller} one has to study the sequence
 $\bigl(\tau(\mathbf{W}_k)\bigr)_{k\in\N}$, while in our setting we have to study the more complex sequence $\bigl(\widetilde{\mathbf{W}}_k,\tau(\mathbf{W}_k)\bigr)_{k\in\N}$; compare \mbox{\cite[proof of Theorem 3.8]{gilch-mueller}} with Lemma \ref{lemma:tildeW-process} and Proposition \ref{prop:ell-existence}.

\section{A Formula via Double Generating Functions}

\label{sec:dgf}

In this section we derive another formula for the asymptotic entropy. The main
tool is the following theorem of Sawyer and Steger \cite[Theorem 2.2]{sawyer}:
\begin{Th}[Sawyer and Steger]
\label{sawyer}
Let $(Y_n)_{n\in\N_0}$ be a sequence of real-valued random variables such that, for some
$\delta>0$,
$$
\mathbb{E}\biggl( \sum_{n\geq 0} \exp(-rY_n-sn)\biggl) = \frac{C(r,s)}{g(r,s)}
\quad \textrm{ for } 0<r,s<\delta,
$$
where $C(r,s)$ and $g(r,s)$ are analytic for $|r|,|s|<\delta$
and \mbox{$C(0,0)\neq 0$}. Denote by $g'_r$ and $g'_s$ the partial derivatives of $g$
with respect to $r$ and $s$. Then
$$
\frac{Y_n}{n} \xrightarrow{n\to\infty} \frac{g'_r(0,0)}{g'_s(0,0)} \quad
\textrm{ almost surely.}
$$
\end{Th}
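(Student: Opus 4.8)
The plan is to read off the exponential growth rate of the tilted moments $\E[e^{-rY_n}]$ from the singularity of $z\mapsto C(r,s)/g(r,s)$, viewed as a power series in $z=e^{-s}$, and then to promote this to the almost sure statement by two Chebyshev estimates and the Borel--Cantelli lemma. Write $a:=g'_r(0,0)$, $b:=g'_s(0,0)$, $\beta:=a/b$, and $\phi_n(r):=\E[e^{-rY_n}]\ge 0$, so that the hypothesis reads $\sum_{n\ge 0}\phi_n(r)\,z^n=C(r,s)/g(r,s)$ with $z=e^{-s}$ (for $0<r,s<\delta$). First I would observe that evaluating this identity at $r=0$ gives $\sum_n\phi_n(0)z^n=(1-z)^{-1}$, hence $g(0,s)=C(0,s)(1-e^{-s})$, hence $g(0,0)=0$ and $b=g'_s(0,0)=C(0,0)\ne 0$; by the implicit function theorem $g(r,\cdot)$ has, for $r$ near $0$, a unique small analytic zero $s=\sigma(r)$ with $\sigma(0)=0$ and $\sigma(r)=-\beta r+O(r^2)$, and this zero is simple, with $C(r,\sigma(r))\ne 0$. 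A Pringsheim/Hurwitz argument (the $\phi_n(r)$ are nonnegative and $\sigma(r)$ is the only zero of $g(r,\cdot)$ in a fixed neighbourhood of $0$) then shows that the radius of convergence of $\sum_n\phi_n(r)z^n$ equals $z_0(r):=e^{-\sigma(r)}$ and that $C(r,\cdot)/g(r,\cdot)$ has a simple pole there.

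Next I would convert this into a moment bound by an elementary Tauberian estimate. Set $\tilde\phi_n(r):=\phi_n(r)\,z_0(r)^n\ge 0$; then $\sum_n\tilde\phi_n(r)w^n=\sum_n\phi_n(r)\bigl(z_0(r)w\bigr)^n$ has a simple pole at $w=1$, so $\sum_n\tilde\phi_n(r)w^n\le\kappa(r)(1-w)^{-1}$ for $w$ near $1$ and some $\kappa(r)>0$. Choosing $w=1-1/n$ and using $w^k\ge w^n$ for $k\le n$,
$$
\sum_{k\le n}\tilde\phi_k(r)\le w^{-n}\sum_{k\ge 0}\tilde\phi_k(r)w^k\le\kappa(r)\,(1-w)^{-1}\,w^{-n}=O(n),
$$
hence $\tilde\phi_n(r)=O(n)$, i.e. $\E[e^{-rY_n}]\le D(r)\,n\,e^{\sigma(r)n}$ for $n\ge 1$ and a finite $D(r)$. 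I would run the same argument with $r$ replaced by $-r$ — legitimate since the hypothesised identity, being an identity of analytic functions with a convergent left-hand side, extends to a neighbourhood of $r=0$ (in the applications in this paper $|Y_n|\le cn$, so $\E[e^{rY_n}]<\infty$ and this extension is automatic) — obtaining $\E[e^{rY_n}]\le D(-r)\,n\,e^{\sigma(-r)n}$, with $\sigma(-r)=\beta r+O(r^2)$.

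Finally I would invoke the Borel--Cantelli lemma. Fix $\varepsilon>0$ and a small $r>0$. By Markov's inequality together with the two bounds just obtained,
$$
\Prob\bigl[Y_n\le(\beta-\varepsilon)n\bigr]\le e^{r(\beta-\varepsilon)n}\,\E[e^{-rY_n}]\le D(r)\,n\,e^{(r(\beta-\varepsilon)+\sigma(r))n}
$$
and
$$
\Prob\bigl[Y_n\ge(\beta+\varepsilon)n\bigr]\le e^{-r(\beta+\varepsilon)n}\,\E[e^{rY_n}]\le D(-r)\,n\,e^{(-r(\beta+\varepsilon)+\sigma(-r))n}.
$$
Since $\sigma(\pm r)=\mp\beta r+O(r^2)$, both exponents equal $-\varepsilon r+O(r^2)$, so for $r$ small enough they are $\le-\varepsilon r/2<0$; then both probabilities are summable in $n$, and Borel--Cantelli yields $(\beta-\varepsilon)n<Y_n<(\beta+\varepsilon)n$ eventually, almost surely. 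Letting $\varepsilon\downarrow 0$ along a sequence gives $Y_n/n\to\beta=g'_r(0,0)/g'_s(0,0)$ almost surely.

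The main obstacle I anticipate is twofold. First, making Step 1 rigorous: one must check that $z_0(r)$ is genuinely the radius of convergence and a \emph{simple} pole of $C(r,\cdot)/g(r,\cdot)$, so that the ``simple pole $\Rightarrow$ bounded by $\kappa(r)(1-w)^{-1}$'' input of the Tauberian step is valid; this relies on $C(0,0)\ne 0$ (to rule out cancellation between $C$ and $g$) and on Hurwitz's theorem (to rule out competing zeros of $g$ near $0$), uniformly for small $r$. Second, and more essentially, the passage to negative $r$: the upper estimate $\limsup_n Y_n/n\le\beta$ cannot be deduced from the moments $\E[e^{-rY_n}]$, $r>0$, alone (a large value of $Y_n$ renders $e^{-rY_n}$ negligible and hence invisible), so one genuinely needs two-sided exponential moments, and this is precisely the point where the analyticity of $C$ and $g$ on the full polydisc $|r|,|s|<\delta$ — in concrete situations reinforced by a linear growth bound on $Y_n$ — is indispensable.
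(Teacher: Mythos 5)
You should first be aware that the paper contains no proof of Theorem \ref{sawyer}: it is quoted verbatim from Sawyer and Steger \cite{sawyer} and used as a black box, so the only thing to assess is whether your argument proves the quoted statement. Your overall strategy is the natural one and most of it is sound: the observations $g(0,0)=0$, $g'_s(0,0)=C(0,0)\neq 0$, the implicit-function zero $\sigma(r)=-\beta r+O(r^2)$, the Pringsheim/Hurwitz identification of $e^{-\sigma(r)}$ as the radius of convergence of $\sum_n \E\bigl[e^{-rY_n}\bigr]z^n$ with a simple pole there, the Tauberian bound $\E\bigl[e^{-rY_n}\bigr]\le D(r)\,n\,e^{\sigma(r)n}$, and the Chernoff--Borel--Cantelli step all work (modulo the small point that $r=0$ is not in the hypothesised range, so the identity $g(0,s)=C(0,s)(1-e^{-s})$ needs a limit $r\downarrow 0$, justified e.g.\ by convexity of $r\mapsto\E[e^{-rY_n}]$ on $[0,\delta)$ and dominated convergence). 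This settles $\liminf_n Y_n/n\ge\beta$ almost surely.

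The genuine gap is exactly the step you flag: the passage to negative $r$, which is what the upper bound $\limsup_n Y_n/n\le\beta$ hinges on. The hypothesis controls $\E[e^{-rY_n}]$ only for $0<r<\delta$; an identity of analytic functions can only be continued inside a region where both sides are known to be defined, and nothing in the statement guarantees $\E[e^{rY_n}]<\infty$ for $r>0$, so ``the identity extends to a neighbourhood of $r=0$'' is unjustified as written, while invoking $|Y_n|\le cn$ imports a hypothesis not present in the theorem (and even then the continued identity is only immediate for $s>c|r|$, which need not reach the relevant singularity $s=\sigma(-r)\approx\beta|r|$, so a further Pringsheim argument would still be needed). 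The gap can in fact be closed within the stated hypotheses, and this is the missing idea: for fixed $s_0\in(0,\delta)$ the map $r\mapsto\E\sum_n e^{-rY_n-s_0n}$ is the two-sided Laplace transform of the positive finite measure $\mu_{s_0}(\cdot)=\sum_n e^{-s_0n}\,\Prob[Y_n\in\cdot\,]$, and by Landau's theorem its abscissa of convergence is a singular point of the function; since $C(\cdot,s_0)/g(\cdot,s_0)$ continues it analytically across $r=0$ down to the unique nearby real zero of $g(\cdot,s_0)$, which lies at strictly negative $r$ (at $r\approx-s_0/\beta$ when $\beta>0$), the transform must be finite, and the identity must persist, for small negative $r$ as well. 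With this self-improvement your two-sided Chernoff/Borel--Cantelli argument closes; without it (or some equivalent input), the upper-deviation half — which, as you correctly observe, cannot be extracted from the moments $\E[e^{-rY_n}]$, $r>0$, alone — remains unproven, so the proposal as it stands is incomplete.
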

Setting $z=e^{-s}$ and $Y_n:=-\log L(o,X_n|1)$ the expectation in Theorem
\ref{sawyer} becomes
$$
\mathcal{E}(r,z) = \sum_{x\in V} \sum_{n\geq 0} p^{(n)}(o,x) L(o,x|1)^r z^n
= \sum_{x\in V} G(o,x|z) L(o,x|1)^r.
$$
We define for $i\in\calI$, $r,z\in\mathbb{C}$:
\begin{eqnarray*}
\mathcal{L}(r,z) & := & 1+\sum_{n\geq 1} \sum_{x_1\dots x_n\in V\setminus \{o\}}
\prod_{j=1}^n L_{\tau(x_j)}\bigl( o_{\tau(x_j)},x_j| \xi_{\tau(x_j)}(z)\bigr)
\cdot L_{\tau(x_j)}\bigl( o_{\tau(x_j)},x_j | \xi_{\tau(x_j)}\bigr)^r,\\
\mathcal{L}_i^+(r,z) & := & \sum_{x\in V_i^\times}
L_i\bigl(o_i,x|\xi_i(z)\bigr) L_i(o_i,x|\xi_i)^r.
\end{eqnarray*}
Finally, $\mathcal{L}_i(r,z)$ is defined by
$$
\mathcal{L}_i^+(r,z) \cdot 
 \biggl( 1+ \sum_{n\geq 2}
\sum_{\substack{x_2\dots x_n\in  V^\times\setminus\{o\},\\ \tau(x_2)\neq i}}
\prod_{j=2}^n L_{\tau(x_j)}\bigl( o_{\tau(x_j)},x_j| \xi_{\tau(x_j)}(z)\bigr)
\cdot L_{\tau(x_j)}\bigl( o_{\tau(x_j)},x_j | \xi_{\tau(x_j)}\bigr)^r\biggr).
$$
With these definitions we have $\mathcal{L}(r,z)=1+\sum_{i\in\calI}
\mathcal{L}_i(r,z)$ and $\mathcal{E}(r,z)=G(o,o|z) \cdot
\mathcal{L}(r,z)$. Simple computations analogously to \cite[Lemma 4.2,
Corollary 4.3]{gilch:07} yield
$$
\mathcal{E}(r,z) = \frac{G(o,o|z)}{1-\mathcal{L}^\ast(r,z)}, \textrm{ where }
\mathcal{L}^\ast(r,z) = \sum_{i\in\calI} \frac{\mathcal{L}_i^+(r,z)}{1+\mathcal{L}_i^+(r,z)}.
$$
We now define $C(r,z):=G(o,o|z)$ and $g(r,z):=1-\mathcal{L}^\ast(r,z)$ and
apply Theorem \ref{sawyer} by differentiating $g(r,z)$ and evaluating the
derivatives at $(0,1)$:
\begin{eqnarray*}
\frac{\partial g(r,z)}{\partial r}\biggl|_{r=0,z=1} & = &
-\sum_{i\in\calI}
\frac{\sum_{x\in V_i^\times} L_i(o_i,x|\xi_i) \cdot \log
  L_i(o_i,x|\xi_i)}{\bigl(1+\sum_{x\in V_i^\times}
  L_i(o_i,x|\xi_i)\bigr)^2}\\
&=& -\sum_{i\in\calI} G_i(o_i,o_i|\xi_i)\cdot (1-\xi_i)^2 \cdot \sum_{x\in V_i^\times} G_i(o_i,x|\xi_i) \log
L_i(o_i,x|\xi_i)\\
&=& -\sum_{i\in\calI} G_i(o_i,o_i|\xi_i)\cdot (1-\xi_i)^2 \cdot \\
&&\quad\quad \cdot \biggl(
\sum_{x\in V_i} G_i(o_i,x|\xi_i) \log G_i(o_i,x|\xi_i)-\frac{\log G_i(o_i,o_i|\xi_i)}{1-\xi_i}\biggr),\\
\frac{\partial g(r,z)}{\partial s}\biggl|_{r=0,s=0} & = &
\sum_{i\in\calI} \frac{\partial }{\partial z} \Bigl( 1- \bigl(1-\xi_i(z)\bigr)
G_i\bigl(o_i,o_i|\xi_i(z)\bigr)\Bigr)\Bigl|_{z=1}\\
&=&\sum_{i\in\calI} \xi_i'(1) \cdot \bigl( G_i(o_i,o_i|\xi_i) - (1-\xi_i) G_i'(o_i,o_i|\xi_i)\bigr).
\end{eqnarray*}
\begin{Cor}\label{Cor:entropy-dgf}
Assume $R>1$. Then the entropy can be rewritten as
$$
h= \frac{\frac{\partial g(r,z)}{\partial r}(0,1)}{\frac{\partial g(r,z)}{\partial s}(0,1)}.
$$
\end{Cor}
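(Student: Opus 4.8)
The plan is to apply the Sawyer--Steger theorem (Theorem \ref{sawyer}) to the sequence $Y_n := -\log L(o,X_n|1)$, exactly as set up in the paragraphs immediately preceding the statement. Recall from Theorem \ref{thm:entropy} that $\ell = \lim_{n\to\infty} \frac{1}{n}(-\log L(o,X_n|1)) = h$ almost surely, so $Y_n/n \to h$ a.s. Hence it suffices to verify that the function $\mathcal{E}(r,z)$, written in the variables $r$ and $s$ via $z = e^{-s}$, satisfies the hypotheses of Theorem \ref{sawyer}; the theorem then identifies the a.s. limit of $Y_n/n$ with $g'_r(0,0)/g'_s(0,0)$, which is exactly the claimed expression for $h$ (after translating back from the $s$-derivative to the $z$-derivative at $z=1$).

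Concretely, I would proceed in the following steps. First, recall that the computations preceding the corollary establish $\mathcal{E}(r,z) = G(o,o|z)/(1-\mathcal{L}^\ast(r,z))$, so with $C(r,z) := G(o,o|z)$ and $g(r,z) := 1-\mathcal{L}^\ast(r,z)$ one has $\mathcal{E}(r,s) = C(r,s)/g(r,s)$ in the Sawyer--Steger form. Second, I need to check the three analyticity/non-degeneracy conditions: (i) $C(r,s)$ is analytic in a neighbourhood of $(0,0)$ and $C(0,0) = G(o,o|1) \neq 0$, which holds since $R>1$ implies $G(o,o|\cdot)$ is analytic at $z=1$ and $G(o,o|1)\geq 1>0$; (ii) $g(r,s)$ is analytic near $(0,0)$, which follows because each $\mathcal{L}_i^+(r,z)$ is analytic for $r,z$ in a suitable complex neighbourhood of $(0,1)$ — here one uses that $\xi_i(z)$ has radius of convergence strictly larger than $1$, that $\xi_i(1)<1$, and that $\sum_{x\in V_i^\times} L_i(o_i,x|\xi_i) < \infty$ (finiteness of $G_i(o_i,o_i|\xi_i)$), together with the uniform-irreducibility estimates from the proof of Proposition \ref{prop:ell-existence} to control $L_i(o_i,x|\xi_i)^r$ uniformly for $r$ in a small complex disc; (iii) the expansion $\mathcal{E}(r,z) = \sum_{x\in V}\sum_{n\geq 0} p^{(n)}(o,x) L(o,x|1)^r z^n$ is valid, i.e.\ the interchange of summation is justified and the series converges for $0<r,s<\delta$ — this is the content of the ``simple computations analogously to \cite[Lemma 4.2, Corollary 4.3]{gilch:07}'' and boils down to a dominated-convergence / geometric-series argument using $L(o,x|1)\leq 1$ and the convergence of $\sum_x G(o,x|z)$ for $z<R$. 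Third, once the hypotheses are verified, Theorem \ref{sawyer} gives $Y_n/n \to g'_r(0,0)/g'_s(0,0)$ a.s., and since we already know $Y_n/n\to h$ a.s., the two limits coincide; the explicit partial derivatives of $g$ at $(0,1)$ displayed just before the corollary then give the stated formula.

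The main obstacle I expect is condition (ii), namely establishing joint analyticity of $g(r,z) = 1 - \sum_{i\in\calI}\mathcal{L}_i^+(r,z)/(1+\mathcal{L}_i^+(r,z))$ in a genuine two-dimensional complex neighbourhood of $(0,1)$, in particular that the infinite sums defining $\mathcal{L}_i^+(r,z)$ converge locally uniformly there. The delicate point is uniform control of $L_i(o_i,x|\xi_i)^r$ and of $L_i(o_i,x|\xi_i(z))$ as $x$ ranges over the (possibly infinite) set $V_i^\times$: one bounds $|L_i(o_i,x|\xi_i(z))| \leq G_i(o_i,x|\xi_i(z)) \leq (1-\max_i\xi_i(z))^{-1}$ for $z$ near $1$ and combines this with $\sum_x L_i(o_i,x|\xi_i) < \infty$ plus the lower bound $L_i(o_i,x|\xi_i)\geq(\eps_0\xi_{\min}^K)^{|x|}$ (from uniform irreducibility, as in Proposition \ref{prop:ell-existence}) to dominate $|L_i(o_i,x|\xi_i)^r|$ uniformly for $|r|$ small. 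A Weierstrass-type argument (locally uniform limit of analytic functions is analytic) then yields analyticity of $\mathcal{L}_i^+$, and since $1+\mathcal{L}_i^+(0,1) = 1/\bigl((1-\xi_i)G_i(o_i,o_i|\xi_i)\bigr) \neq 0$ the quotient is analytic near $(0,1)$ as well. The remaining steps — the interchange of sums and the derivative evaluations — are routine and are already carried out in the displayed computations preceding the statement, so I would simply invoke them.
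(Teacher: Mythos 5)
Your proposal follows exactly the paper's own route: apply the Sawyer--Steger theorem to $Y_n=-\log L(o,X_n|1)$ with $C(r,z)=G(o,o|z)$ and $g(r,z)=1-\mathcal{L}^\ast(r,z)$, identify the almost sure limit $Y_n/n\to\ell=h$ from Proposition \ref{prop:ell-existence} and Theorem \ref{thm:entropy}, and read off the formula from the derivative computations displayed before the corollary. The extra detail you supply on the analyticity of $\mathcal{L}_i^+$ and the nonvanishing of $1+\mathcal{L}_i^+(0,1)=\bigl((1-\xi_i)G_i(o_i,o_i|\xi_i)\bigr)^{-1}$ is correct and merely fills in what the paper leaves as ``simple computations''.
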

\begin{flushright}
$\Box$
\end{flushright}

\section{Entropy of Random Walks on Free Products of Groups}

\label{sec:groups}

In this section let each $V_i$ be a finitely generated group $\Gamma_i$ with
identity $e_i=o_i$. W.l.o.g. we assume that the $V_i$'s are pairwise
disjoint. The free product is again a group with concatenation (followed by
iterated cancellations and contractions) as group operation. We write $\Gamma_i^\times:=\Gamma_i\setminus\{e_i\}$.
Suppose we are given a probability measure $\mu_i$ on $\Gamma_i\setminus\{e_i\}$ for
every $i\in\calI$ governing a random walk on $\Gamma_i$, that is,
$p_i(x,y)=\mu_i(x^{-1}y)$ for all $x,y\in\Gamma_i$. Let $(\alpha_i)_{i\in\calI}$ be a family of strictly
positive real numbers with $\sum_{i\in\calI} \alpha_i=1$. Then the random walk
on the free product $\Gamma:=\Gamma_1 \ast \dots \ast \Gamma_r$ is defined by
the transition probabilities $p(x,y)=\mu(x^{-1}y)$, where
$$
\mu(w) = 
\begin{cases}
\alpha_i \mu_i(w), & \textrm{ if } w\in \Gamma_i^\times,\\
0, & \textrm{ otherwise.}
\end{cases}
$$
Analogously, $\mu^{(n)}$ denotes the $n$-th convolution power of $\mu$. The random walk on $\Gamma$ starting at the identity $e$ of $\Gamma$ is again
denoted by the sequence $(X_n)_{n\in\N_0}$. In particular, the radius of convergence of the associated Green function is strictly bigger than $1$; see \cite[Theorem 10.10, Corollary 12.5]{woess}.
In the case of free products of groups it is well-known that the entropy exists and can be written as
$$
h=\lim_{n\to\infty} \frac{-\log \pi_n(X_n)}{n}
= \lim_{n\to\infty} \frac{-\log F(e,X_n|1)}{n};
$$
see Derriennic \cite{derrienic}, Kaimanovich and Vershik \cite{kaimanovich-vershik} and Blach\`ere,
Ha\"issinsky and Mathieu \cite{blachere-haissinsky-mathieu}. For free products
of \textit{finite} groups, Mairesse and Math\'eus \cite{mairesse1} give an
explicit formula for $h$, which remains also valid for free products of
countable groups, but in the latter case one needs the solution of an infinite
system of polynomial equations. In the following we will derive another formula
for the entropy, which holds also for free products of \textit{infinite} groups.
\par
We set $l(g_1\dots g_n) := -\log F(e,g_1\dots g_n|1)$. Observe that transitivity yields $F(g,gh|1)= F(e,h|1)$. Thus,
$$
l(g_1\dots g_n) = 
-\log \prod_{j=1}^n F(g_1\dots g_{j-1},g_1\dots g_j|1)
= -\sum_{j=1}^n \log F(e,g_j|1).
$$
First, we rewrite the following expectations as
\begin{eqnarray*}
\E l(X_n) & = & \sum_{i\in\calI} \sum_{g\in\Gamma_i} \alpha_i\mu_i(g) \sum_{h\in\Gamma} l(h)\,\mu^{(n)}(h),\\
 \E l(X_{n+1}) & = & \sum_{i\in\calI} \sum_{g\in\Gamma_i} \alpha_i\mu_i(g) \sum_{h\in\Gamma} 
l(gh)\,\mu^{(n)}(h).
\end{eqnarray*}
Thus,
\begin{eqnarray}
\E l(X_{n+1}) - \E l(X_n)
&=& \sum_{i\in\calI} \sum_{g\in\Gamma_i} \alpha_i\mu_i(g) \int \bigl(l(gh)-l(h)\bigr)\,d\mu^{(n)}(h)\nonumber\\
&=& \sum_{i\in\calI} \sum_{g\in\Gamma_i} \alpha_i\mu_i(g) \int -\log \frac{F(e,gX_n|1)}{F(e,X_n|1)}d\mu^{(n)}.\label{eqn:expectations}
\end{eqnarray}
Recall that $\Vert X_n\Vert\to \infty$ almost surely. That is, $X_n$ converges
almost surely to a random infinite word $X_\infty$ of the form
$x_1x_2\ldots\in\Big(\bigcup_{i=1}^r\Gamma_i^\times\Bigr)^{\N}$, where two
consecutive letters are not from the same $\Gamma_i^\times$.
Denote by $X_\infty^{(1)}$ the first letter of $X_\infty$. Let be $g\in
\Gamma_i^\times$. For $n\geq \e{1}$, the integrand in $(\ref{eqn:expectations})$ is constant: if $\tau\bigl(X_\infty^{(1)}\bigr)\neq i$ then
$$
\log \frac{F(e,gX_n|1)}{F(e,X_n|1)} = \log F(e,g),
$$
and if $\tau\bigl(X_\infty^{(1)}\bigr)= i$ then
$$
\log \frac{F(e,gX_n|1)}{F(e,X_n|1)} = \log \frac{F\bigl(e,gX_\infty^{(1)}\bigl|1\bigr)}{F\bigl(e,X_\infty^{(1)}\bigl|1\bigr)}.
$$
By \cite[Section 5]{gilch:07}, for $i\in\mathcal{I}$ and $g\in\Gamma_i^\times$,
\begin{eqnarray*}
\varrho(i)& := & \Prob[X_\infty^{(1)}\in \Gamma_i] =
1-(1-\xi_i)\,G_i(o_i,o_i|\xi_i) \quad \textrm{ and}\\
\Prob[X_\infty^{(1)}=g] & = & F(o_i,g|\xi_i) \, (1-\xi_i)\,G_i(o_i,o_i|\xi_i)
= (1-\xi_i)\,G_i(o_i,g|\xi_i).
\end{eqnarray*}
Recall that $F(e,g|1) = F_i(o_i,g|\xi_i)$ for each $g\in\Gamma_i$. We
get:
\begin{Th}\label{thm:entropy-groups}
 Whenever $h_i:=-\sum_{g\in\Gamma_i} \mu_i(g) \log \mu_i(g)<\infty$ for all
 $i\in\calI$, that is, when all random walks on the factors $\Gamma_i$ have
 finite single-step entropy, then the asymptotic entropy $h$ of the random walk on $\Gamma$ is given by
$$
h =-\sum_{i\in\calI}\sum_{g\in\Gamma_i} \alpha_i \mu_i(g)
\Bigl[\bigl(1-\varrho(i)\bigr) \, \log F_i(o_i,g|\xi_i) +
(1-\xi_i)\, G_i(o_i,o_i|\xi_i)\, \mathcal{F}(g)\Bigr],
$$
where
\begin{equation}\label{equ-mathcalF}
\mathcal{F}(g):=
\sum_{g'\in\Gamma_i^\times} F_i(o_i,g'|\xi_i) \log
\frac{F_i(o_i,gg'|\xi_i)}{F_i(o_i,g'|\xi_i)}\ \textrm{ for } g\in\Gamma_i.
\end{equation} 
\end{Th}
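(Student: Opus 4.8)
The plan is to use that, for random walks on groups, the asymptotic entropy is already known to equal $h=\lim_{n\to\infty}\frac1n\,\E\,l(X_n)$, where $l(g_1\dots g_n)=-\log F(e,g_1\dots g_n|1)=-\sum_{j}\log F(e,g_j|1)$. Under the hypothesis $h_i<\infty$ for all $i\in\calI$ the single-step entropy $H(\mu)=-\sum_w\mu(w)\log\mu(w)=-\sum_i\alpha_i\log\alpha_i+\sum_i\alpha_i h_i$ is finite, so by the Shannon theorem for group walks (Derriennic \cite{derrienic}, Kaimanovich--Vershik \cite{kaimanovich-vershik}) $-\frac1n\log\pi_n(X_n)\to h$ almost surely and in $L^1$; combined with the Green-metric identification (Blach\`ere--Ha\"issinsky--Mathieu \cite{blachere-haissinsky-mathieu}) and the elementary bounds $1\le G(x,x|1)\le(1-1/R)^{-1}$ this gives $\frac1n l(X_n)\to h$ almost surely and, by uniform integrability, $\frac1n\E\,l(X_n)\to h$. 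Consequently, by the Ces\`aro--Stolz theorem, it suffices to prove that $\E\,l(X_{n+1})-\E\,l(X_n)$ converges as $n\to\infty$: its limit is then automatically $h$, and the real work is to identify this limit with the right-hand side of the asserted formula.

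Starting from the already-established identity (\ref{eqn:expectations}), I fix $i\in\calI$, $g\in\Gamma_i^\times$ and set $\psi_n:=l(gX_n)-l(X_n)=-\log\frac{F(e,gX_n|1)}{F(e,X_n|1)}$, so that $\E\,l(X_{n+1})-\E\,l(X_n)=\sum_{i}\sum_{g}\alpha_i\mu_i(g)\,\E\,\psi_n$. Since $\Vert X_n\Vert\to\infty$ almost surely, for all $n\ge\e{1}$ the first block of $X_n$ has frozen at $X_\infty^{(1)}$; reducing $g\cdot X_n$ to normal form (cancellation and contraction can occur only at the junction with $X_\infty^{(1)}$), and using the block-additivity of $l$, the identity $F(e,g'|1)=F_i(o_i,g'|\xi_i)$ for $g'\in\Gamma_i$, and the convention $F(e,e_i|1)=1$ (to cover $gX_\infty^{(1)}=e_i$), one finds that on $\{n\ge\e{1}\}$
$$
\psi_n=\begin{cases}-\log F_i(o_i,g|\xi_i), & \tau\bigl(X_\infty^{(1)}\bigr)\neq i,\\[1.2ex]-\log\dfrac{F_i\bigl(o_i,\,gX_\infty^{(1)}\,|\,\xi_i\bigr)}{F_i\bigl(o_i,\,X_\infty^{(1)}\,|\,\xi_i\bigr)}, & \tau\bigl(X_\infty^{(1)}\bigr)=i.\end{cases}
$$
Hence $\psi_n\to\psi_\infty$ almost surely, where $\psi_\infty$ is this limiting random variable, a function of $X_\infty^{(1)}$ only.

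To interchange limit and expectation for fixed $(i,g)$ I use the deterministic two-sided bound $\log F(e,g|1)\le\psi_n\le-\log F(e,g^{-1}|1)$, valid for every $n$: the lower estimate is $F(e,gX_n|1)\ge F(e,g|1)F(g,gX_n|1)=F(e,g|1)F(e,X_n|1)$ by group-invariance of $F(\cdot,\cdot|1)$, and the upper one is the same inequality applied to $X_n=g^{-1}(gX_n)$; thus $|\psi_n|$ is bounded uniformly in $n$ and bounded convergence yields $\E\,\psi_n\to\E\,\psi_\infty$. Using the first-letter distribution recalled in the text --- $\Prob[\tau(X_\infty^{(1)})\neq i]=1-\varrho(i)$ and, for $g'\in\Gamma_i^\times$, $\Prob[X_\infty^{(1)}=g']=(1-\xi_i)G_i(o_i,g'|\xi_i)=(1-\xi_i)G_i(o_i,o_i|\xi_i)F_i(o_i,g'|\xi_i)$, the last equality holding on groups because $G_i(g',g'|z)=G_i(o_i,o_i|z)$ --- I obtain
$$
\E\,\psi_\infty=-\bigl(1-\varrho(i)\bigr)\log F_i(o_i,g|\xi_i)-(1-\xi_i)G_i(o_i,o_i|\xi_i)\,\mathcal{F}(g),
$$
with $\mathcal{F}(g)$ as in (\ref{equ-mathcalF}). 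Summing $\alpha_i\mu_i(g)\,\E\,\psi_\infty$ over $g\in\Gamma_i$ and $i\in\calI$ reproduces exactly the asserted formula.

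The main technical point --- and the one place where $h_i<\infty$ is genuinely needed --- is to justify exchanging $\lim_n$ with the double sum over $(i,g)$, i.e.\ the absolute summability of $\alpha_i\mu_i(g)\,\E\,\psi_\infty$. For the first family of terms one uses $0\le-\log F_i(o_i,g|\xi_i)=-\log F(e,g|1)\le-\log(\alpha_i\mu_i(g))$, giving $\sum_{g\in\Gamma_i}\mu_i(g)\,\bigl|\log F_i(o_i,g|\xi_i)\bigr|\le-\log\alpha_i+h_i<\infty$. For the $\mathcal{F}(g)$-terms one notes first that $\sum_{g'\in\Gamma_i}F_i(o_i,g'|\xi_i)=\bigl((1-\xi_i)G_i(o_i,o_i|\xi_i)\bigr)^{-1}<\infty$ (summing $G_i(o_i,g'|\xi_i)=\sum_n p_i^{(n)}(o_i,g')\xi_i^{\,n}$ over $g'$), and then, writing $|\log\frac{a}{b}|\le-\log a-\log b$ for $a,b\le1$, changing variables $g''=gg'$, and using the elementary estimate $\sum_g\mu_i(g)F_i(g,g''|\xi_i)\le\xi_i^{-1}F_i(o_i,g''|\xi_i)$ (which follows from $G_i(o_i,g''|z)-[g''=o_i]=z\sum_g\mu_i(g)G_i(g,g''|z)$ and $G_i(g'',g''|z)=G_i(o_i,o_i|z)$), one bounds the full double sum of $|\mathcal{F}(g)|$-contributions by a constant multiple of $\sum_{g'\in\Gamma_i}F_i(o_i,g'|\xi_i)\bigl(-\log F_i(o_i,g'|\xi_i)\bigr)$, which is finite under $h_i<\infty$. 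Carrying out these estimates carefully --- in particular keeping track of the $g\mapsto g^{-1}$ contributions in the crude envelope $|\psi_n|\le\max\{-\log F(e,g|1),-\log F(e,g^{-1}|1)\}$ --- is the crux; everything else is the bookkeeping of the generating-function identities from \cite{gilch:07} and \cite{woess}, and the strict positivity $h>0$ is already known from Theorem \ref{th:entropy2}.
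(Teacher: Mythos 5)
Your route is essentially the paper's: you pass to the differences $\E\,l(X_{n+1})-\E\,l(X_n)$, identify their pointwise limit through the frozen first block $X_\infty^{(1)}$, insert the first-letter law $\Prob[X_\infty^{(1)}=g']=(1-\xi_i)G_i(o_i,g'|\xi_i)$, and reduce the theorem to a finiteness statement coming from $h_i<\infty$; this is exactly the paper's proof skeleton. Your treatment of the $\mathcal{F}(g)$-terms (splitting $|\log(a/b)|\le-\log a-\log b$, substituting $g''=gg'$ and using $\sum_g\mu_i(g)F_i(g,g''|\xi_i)\le\xi_i^{-1}F_i(o_i,g''|\xi_i)$) is a clean equivalent of the paper's Claim B. But be aware that the statement you then simply assert, namely $\sum_{g'}F_i(o_i,g'|\xi_i)\bigl(-\log F_i(o_i,g'|\xi_i)\bigr)<\infty$, is precisely the analytic core of that claim: one needs the expansion $G_i(o_i,g'|\xi_i)=\sum_n\mu_i^{(n)}(g')\,\xi_i^n$, the termwise bound $-\log F_i(o_i,g'|\xi_i)\le-\log\bigl(\mu_i^{(n)}(g')\xi_i^n/G_i(o_i,o_i|\xi_i)\bigr)$, and the linear growth $H(\mu_i^{(n)})\le n\,h_i$ of the convolution entropies (this is where the paper invokes Kingman). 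Without that, the role of the hypothesis $h_i<\infty$ is not actually used anywhere in your argument for the $\mathcal{F}$-part.

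The genuine gap is the step you yourself call the crux. The envelope $|\psi_n|\le\max\{-\log F(e,g|1),\,-\log F(e,g^{-1}|1)\}$ is fine for fixed $(i,g)$ (and there bounded convergence does give $\E\psi_n\to\E\psi_\infty$; incidentally your two one-sided bounds are interchanged, but the envelope is correct), yet it cannot be summed against $\alpha_i\mu_i(g)$ to dominate the double sum uniformly in $n$: the quantity $-\log F_i(o_i,g^{-1}|\xi_i)$ is not controlled by $h_i$. For example, on $\Gamma_i=\Z$ take $\mu_i(+1)=\tfrac12$ and $\mu_i(-4^n)=c\,2^{-n}$ for $n\ge1$; then $h_i<\infty$, but every path from $0$ to $4^n$ has length at least $4^n$, so $F_i(o_i,4^n|\xi_i)\le\xi_i^{4^n}$ and $\sum_g\mu_i(g)\bigl(-\log F_i(o_i,g^{-1}|\xi_i)\bigr)\ge c\,(-\log\xi_i)\sum_{n\ge1}2^{-n}4^n=\infty$. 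Moreover, absolute summability of the limit terms $\alpha_i\mu_i(g)\,\E\psi_\infty$ --- which you announce as ``the'' technical point --- is by itself not sufficient to exchange $\lim_n$ with $\sum_{i,g}$; you need an $n$-uniform summable majorant. One that works and stays inside your own estimates: $\psi_n$ is a function of the first block $X_n^{(1)}$ alone, and since every path from $o$ to a word with first block $x\in\Gamma_i^\times$ must pass through the cut point $x$, one has $\Prob[X_n^{(1)}=x]\le F(e,x|1)=F_i(o_i,x|\xi_i)$ for every $n$; hence $\sup_n\E|\psi_n|\le-\log F_i(o_i,g|\xi_i)+\sum_{x\in\Gamma_i^\times}F_i(o_i,x|\xi_i)\bigl|\log\frac{F_i(o_i,gx|\xi_i)}{F_i(o_i,x|\xi_i)}\bigr|$, which is exactly the quantity shown to be $\alpha_i\mu_i(g)$-summable by your Claim-A-type bound plus the Claim-B-type bound above --- no inverses appear. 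With these two repairs (the $n$-uniform majorant and the convolution-entropy argument) your proof coincides in substance with the paper's.
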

\begin{proof}
Consider the sequence $\E l(X_{n+1}) - \E l(X_n)$. If this sequence converges,
its limit must equal $h$. By the above made
considerations we get
\begin{eqnarray*}
&&\E l(X_{n+1}) - \E l(X_n)\\
&\xrightarrow{n\to\infty} &-\sum_{i\in\calI}\sum_{g\in \Gamma_i}  \mu(g) \biggl[(1-\varrho(i)) \log F_i(o_i,g|\xi_i) +\sum_{g'\in\Gamma_i^\times} \Prob[X_\infty^{(1)}=g'] \log \frac{F_i(o_i,gg'|\xi_i)}{F_i(o_i,g'|\xi_i)}\biggr],
\end{eqnarray*}
if the sum on the right hand side is finite.
We have now established the proposed formula, but it remains to verify
finiteness of the sum above. This follows from the following observations:
\par
\underline{Claim A:} $-\sum_{i\in\calI}\sum_{g\in \Gamma_i} \alpha_i \mu_i(g)
(1-\varrho(i)) \log F_i(o_i,g|\xi_i)$ is finite.
\par
Observe that $F_i(o_i,g|\xi_i)\geq \mu_i(g)\xi_i$ for $g\in\mathrm{supp}(\mu_i)$. Thus,
$$
0 < -\sum_{g\in \Gamma_i} \mu_i(g) \log F_i(o_i,g|\xi_i) \leq -\sum_{g\in
  \Gamma_i} \mu_i(g) \log \bigl(\mu_i(g)\,\xi_i\bigr) = h_i-\log \xi_i.
$$
This proves Claim A.
\par
\underline{Claim B:} $\sum_{i\in\calI}\sum_{g\in \Gamma_i} \alpha_i \mu_i(g)
(1-\xi_i) \sum_{g'\in\Gamma_i^\times} G_i(o_i,g'|\xi_i) \Bigl|\log
\frac{F_i(o_i,gg'|\xi_i)}{F_i(o_i,g'|\xi_i)}\Bigr|$ is finite.
\par
Observe that
  $F_i(o_i,gg'|\xi_i)/F_i(o_i,g'|\xi_i)=G_i(o_i,gg'|\xi_i)/G_i(o_i,g'|\xi_i)$. Obviously,
$$
\mu_i^{(n)}(g')\,\xi_i^n \leq G_i(o_i,g'|\xi_i)\leq \frac{1}{1-\xi_i} \quad
\textrm{ for every } n\in\N \textrm{ and } g'\in\Gamma_i.
$$
For $g\in \Gamma$ set $N(g):=\bigl\lbrace n\in\N_0 \,\bigl|\,
\mu^{(n)}(g)>0\bigr\rbrace$. Then: 
\begin{eqnarray*}
0 & < & \sum_{g'\in\Gamma_i^\times} \Prob[X_\infty^{(1)}=g']\cdot \big| \log
G_i(o_i,g'|\xi_i)\bigr| \\
&=& \sum_{g'\in\Gamma_i^\times} (1-\xi_i)\cdot G_i(o_i,g'|\xi_i)\cdot \big| \log G_i(o_i,g'|\xi_i)\bigr|\\
&=& \sum_{g'\in\Gamma_i^\times} (1-\xi_i)\cdot \sum_{n\in N(g')}
\mu_i^{(n)}(g')\cdot \xi_i^n\cdot \big| \log G_i(o_i,g'|\xi_i)\bigr|\\
&\leq &\sum_{g'\in\Gamma_i^\times} (1-\xi_i)\cdot \sum_{n\in N(g')}
\mu_i^{(n)}(g')\cdot \xi_i^n\cdot 
\max\bigl\lbrace -\log \bigl(\mu_i^{(n)}(g')\,\xi_i^n\bigr), -\log(1-\xi_i)\bigr\rbrace\\
&\leq &(1-\xi_i) \cdot \sum_{n\in N(g')} n\,\xi_i^n \cdot\underbrace{\frac{-1}{n}
\sum_{g'\in\Gamma_i} \mu_i^{(n)}(g') \log \mu_i^{(n)}(g')}_{(\ast)}
- (1-\xi_i)\,\log \xi_i \sum_{n\geq 1} n\,\xi_i^n\\
&&\quad - (1-\xi_i)\,\log (1-\xi_i) \sum_{n\geq 1} \xi_i^n.
\end{eqnarray*}
Recall that $h_i<\infty$ together with Kingman's subadditive ergodic theorem
implies existence of a constant $H_i\geq 0$  with 
\begin{equation}\label{convergence-hi}
\lim_{n\to\infty} -\frac{1}{n}\sum_{g\in\Gamma_i} \mu_i^{(n)}(g) \log \mu_i^{(n)}(g) = H_i.
\end{equation}
Thus, if $n\in\N$ is large enough, the sum $(\ast)$ is in the interval
$(H_i-\eps,H_i+\eps)$ for any arbitrarily small $\eps>0$. That is, the sum
$(\ast)$ is uniformly bounded for all $n\in\N$. From this follows that the rightmost side of the last inequality chain is finite.
\par
Furthermore, we have for each $g\in\Gamma_i$ with $\mu_i^{(n)}(g)>0$:
\begin{eqnarray*}
0  & < & \sum_{g'\in\Gamma_i^\times}\Prob[X_\infty^{(1)}=g']\cdot \big| \log G_i(o_i,gg'|\xi_i)\bigr|\\
&=&\sum_{g'\in\Gamma_i^\times} (1-\xi_i)\cdot G_i(o_i,g'|\xi_i)\cdot \bigl| \log G_i(o_i,gg'|\xi_i)\bigr|\\
&=& \sum_{g'\in\Gamma_i^\times} (1-\xi_i) \cdot\sum_{n\in N(g')}
\mu_i^{(n)}(g')\cdot \xi_i^n\cdot \bigl| \log G_i(o_i,gg'|\xi_i)\bigr|\\
&\leq & \sum_{g'\in\Gamma_i^\times} (1-\xi_i)\cdot \sum_{n\in N(g')}
\mu_i^{(n)}(g')\cdot \xi_i^n \cdot \max\bigl\lbrace -\log \bigl(\mu_i(g)\,\mu_i^{(n)}(g')\,\xi_i^{n+1}\bigr),-\log(1-\xi_i)\bigr\rbrace\\
&\leq & -(1-\xi_i) \cdot \sum_{n\in N(g')} \xi_i^n
\cdot \sum_{g'\in\Gamma_i} \mu_i^{(n)}(g') \log \mu_i^{(n)}(g') -
(1-\xi_i)\cdot \log \xi_i \cdot
\sum_{n\geq 1} (n+1)\,\xi_i^n\\
&&\quad - \log \mu_i(g)- \log (1-\xi_i).
\end{eqnarray*}
If we sum up over all $g$ with $\mu(g)>0$, we get:
\begin{eqnarray*}
&& -\underbrace{\sum_{i\in\calI}\sum_{g\in\Gamma_i} \alpha_i \mu_i(g)
(1-\xi_i) \sum_{n\in N(g')} \xi_i^n \sum_{g'\in\Gamma_i} \mu_i^{(n)}(g') \log \mu_i^{(n)}(g')}_{(I)} \\
&& \quad - \underbrace{\sum_{i\in\calI}\sum_{g\in\Gamma_i} \alpha_i
  \mu_i(g) (1-\xi_i)\,\log \xi_i\sum_{n\geq 1}
  (n+1)\,\xi_i^n }_{(II)}\\
&&\quad 
-\underbrace{\sum_{i\in\calI} \alpha_i \sum_{g\in\Gamma_i} \mu_i(g)\log
  \mu_i(g)}_{(III)}
- \underbrace{\sum_{i\in\calI} \alpha_i  \log(1-\xi_i)}_{<\infty}.
\end{eqnarray*}
Convergence of $(I)$ follows from (\ref{convergence-hi}), $(II)$ converges
since $\xi_i<1$ and $(III)$ is convergent by assumption $h_i<\infty$. This
finishes the proof of Claim B, and thus the proof of the theorem.
\end{proof}
Erschler and Kaimanovich \cite{erschler-kaimanovich:entropy-continuity} asked if drift and entropy of random walks on groups depend continuously on the probability measure, which governs the random walk. Ledrappier \cite{ledrappier:10} proves in his recent, simultaneous paper that drift and entropy of finite-range random walks on free groups vary analytically with the probability measure of constant support. By Theorem \ref{thm:entropy-groups}, we are even able to show continuity for free products of finitely generated groups, but restricted to nearest neighbour random walks with fixed set of generators.
\begin{Cor}\label{cor:analyticity}
Let $\Gamma_i$ be generated as a semigroup by $S_i$. Denote by $\mathcal{P}_i$ the set of probability measures $\mu_i$ on $S_i$ with $\mu_i(x_i)>0$ for all $x_i\in S_i$. Furthermore, we write $\mathcal{A}:=\{(\alpha_1,\dots,\alpha_r) \mid \alpha_i>0, \sum_{i\in\calI}\alpha_i=1\}$. Then the entropy function 
$$
h: \mathcal{A}\times\mathcal{P}_1 \times\dots \times \mathcal{P}_r \to \mathbb{R}: (\alpha_1,\dots,\alpha_r,\mu_1,\dots,\mu_r) \mapsto h(\alpha_1,\dots,\alpha_r,\mu_1,\dots,\mu_r)
$$
is real-analytic.
\end{Cor}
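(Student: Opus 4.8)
The plan is to check that every quantity in the formula of Theorem~\ref{thm:entropy-groups} depends real-analytically on the parameters $(\alpha_1,\dots,\alpha_r,\mu_1,\dots,\mu_r)$ and that these enter it only through operations preserving real-analyticity (finite sums, products, quotients with non-vanishing denominator, and $\log$ of a positive quantity). Two features of the restricted setting help. First, each $\mu_i$ is supported on the \emph{finite} set $S_i$, so $h_i=-\sum_{g\in S_i}\mu_i(g)\log\mu_i(g)<\infty$ automatically, and the outer sums $\sum_{g\in\Gamma_i}\alpha_i\mu_i(g)[\cdots]$ in Theorem~\ref{thm:entropy-groups} are finite sums over $g\in S_i$; the only infinite sums are those hidden in $\mathcal F(g)$. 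Second, $R>1$ is an open condition on the parameters (and is automatic for free products of finitely generated groups once the excluded case is removed), so the formula of Theorem~\ref{thm:entropy-groups} holds on a neighbourhood of every admissible point. It therefore remains to prove real-analyticity of (i) the numbers $\xi_i=\xi_i(1)$, (ii) the values $G_i(o_i,o_i|\xi_i)$, $F_i(o_i,g|\xi_i)$, $F_i(o_i,gg'|\xi_i)$, and hence $\varrho(i)=1-(1-\xi_i)G_i(o_i,o_i|\xi_i)$, and (iii) the sums $\mathcal F(g)$.

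For (i) I would apply the analytic implicit function theorem to the fixed-point system
$$
\xi_i=\frac{\alpha_i}{1-\sum_{j\in\calI\setminus\{i\}}\sum_{s\in V_j}\alpha_j\,p_j(o_j,s)\,F_j\bigl(s,o_j\,\big|\,\xi_j\bigr)},\qquad i\in\calI,
$$
of which $\vec\xi=(\xi_1,\dots,\xi_r)\in(0,1)^r$ is the relevant solution. Two inputs are needed: that each map $(\zeta,\mu_j)\mapsto F_j(s,o_j|\zeta)$ is real-analytic near $(\xi_j,\mu_j^0)$, and that the Jacobian $\Id-D_{\vec\xi}\Phi$ of the system (with $\Phi$ the vector of right-hand sides) is invertible at $\vec\xi$. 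For the first, analyticity in $\zeta$ on a disc of radius strictly larger than $\xi_j$ follows from $R>1$ together with $F(s,o_j|z)=F_j(s,o_j|\xi_j(z))$ and the continuity and monotonicity of $\xi_j(\cdot)$ (so $\xi_j<\xi_j(\varrho)$ for $\varrho\in(1,R)$ slightly above $1$, and $F_j(s,o_j|\cdot)$ is analytic up to $\xi_j(\varrho)$); joint analyticity follows by writing $F_j(s,o_j|\zeta)=\sum_{n\ge0}c_n(\mu_j)\zeta^n$ with $c_n$ a polynomial in $\mu_j$, and bounding $|c_n(\mu_j)|\le C\bigl((1+\delta)/\xi_j(\varrho)\bigr)^n$ after a small perturbation $\mu_j$ of $\mu_j^0$, which gives local uniform convergence on a complex neighbourhood of $(\xi_j,\mu_j^0)$. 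The same reasoning handles (ii) for $G_i(o_i,o_i|\cdot)$, $F_i(o_i,g|\cdot)$ and $F_i(o_i,gg'|\cdot)$; their compositions with the (once established) analytic map $(\vec\alpha,\vec\mu)\mapsto(\xi_i,\mu_i)$ are then analytic, as is $\log F_i(o_i,g|\xi_i)$ since $F_i(o_i,g|\xi_i)>0$ stays positive. For the second input, the Jacobian of the system has nonnegative entries and vanishing diagonal (each $\Phi_i$ does not involve $\xi_i$), and its invertibility is the non-degeneracy that makes $\vec\xi$ an isolated solution in \cite{gilch:07}; it holds because $\vec\xi$ is the subcritical minimal fixed point of the monotone map $\Phi$ when $R>1$, so $\rho(D_{\vec\xi}\Phi)<1$ by a Perron--Frobenius argument. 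The implicit function theorem then gives real-analyticity of $\vec\xi(\vec\alpha,\vec\mu)$, and with it (i) and (ii).

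The genuinely delicate point is (iii). Each summand of $\mathcal F(g)=\sum_{g'\in\Gamma_i^\times}F_i(o_i,g'|\xi_i)\log\bigl(F_i(o_i,gg'|\xi_i)/F_i(o_i,g'|\xi_i)\bigr)$ is real-analytic by (ii), so it suffices to show the series converges locally uniformly in the parameters; then $\mathcal F(g)$ is a locally uniform limit of analytic functions, hence analytic, and $h$ is a finite sum of products of real-analytic functions, hence real-analytic. To obtain the locally uniform bound I would uniformize the estimates proving Claims~A and~B in the proof of Theorem~\ref{thm:entropy-groups}: using $\Prob[X_\infty^{(1)}=g']=(1-\xi_i)G_i(o_i,g'|\xi_i)$ and bounding $\bigl|\log F_i(o_i,gg'|\xi_i)\bigr|\lesssim n\,|\log\xi_i|+|\log\mu_i(g)|-\log\mu_i^{(n)}(g')$ for $n\in N(g')$, the tail of the series is dominated by
$$
\sum_{n\ge1}n\,\xi_i^{\,n}\Bigl(-\tfrac1n\sum_{g'\in\Gamma_i}\mu_i^{(n)}(g')\log\mu_i^{(n)}(g')\Bigr)
$$
plus convergent geometric series; here the bracketed single-step entropies of $\mu_i^{(n)}$ tend to $H_i$ by (\ref{convergence-hi}) and are bounded in $n$, while $\xi_i$ stays in a compact subinterval of $(0,1)$ and $H_i,h_i,-\log\mu_i(g)$ vary continuously. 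I expect this last step to be the main obstacle: it requires a bound on $-\tfrac1n\sum_{g'}\mu_i^{(n)}(g')\log\mu_i^{(n)}(g')$ that is uniform in $\mu_i$ over a neighbourhood and in $n$, i.e.\ a quantitative, uniform version of the Kingman-type convergence (\ref{convergence-hi}); this can be supplied from the subadditivity of $n\mapsto-\sum_{g'}\mu_i^{(n)}(g')\log\mu_i^{(n)}(g')$ together with continuity in $\mu_i$ of these finitely many quantities for small $n$.
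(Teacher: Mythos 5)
Your overall strategy coincides with the paper's in outline: feed real-analyticity of the ingredients $\xi_i$, $F_i(o_i,g|\xi_i)$, $G_i(o_i,o_i|\xi_i)$, $\varrho(i)$ and $\mathcal{F}(g)$ into the formula of Theorem \ref{thm:entropy-groups}. But the execution differs, and one step has a genuine gap. Concerning (i): your implicit-function-theorem route hinges on invertibility of $\Id-D_{\vec\xi}\Phi$, which you justify only by an appeal to ``subcritical minimal fixed point, hence $\rho(D_{\vec\xi}\Phi)<1$ by Perron--Frobenius''; that is precisely the nontrivial point and it is asserted, not proved. The paper avoids it entirely: the coefficient of $z^n$ in each of $\overline{H}_i(z)$, $F_i(\cdot,\cdot|z)$, $G_i(\cdot,\cdot|z)$ is a homogeneous polynomial with nonnegative (path-counting) coefficients in the finitely many one-step probabilities $\alpha_i\mu_i(x_{i,j})$, and the relevant radii in $z$ exceed $1$; hence the multivariate series still converge at slightly inflated real parameters, and nonnegativity of the coefficients then gives absolute, uniform convergence on a complex polydisc around the actual parameter point. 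This is the content of the displayed expansion of $\xi_i$ in the paper's proof, and it yields real-analyticity of $\xi_i$ and of the evaluated generating functions with no non-degeneracy condition to check.

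The genuine gap is in (iii). You conclude that $\mathcal{F}(g)$ is analytic because it is a locally uniform limit of analytic functions; over the \emph{real} parameter domain this inference is false (uniform limits of polynomials are merely continuous). A Weierstrass-type argument needs locally uniform convergence of the \emph{holomorphic extensions} on a complex neighbourhood of the parameter point, and every estimate you propose to uniformize is probabilistic and only makes sense for real, positive parameters: the identity $\Prob[X_\infty^{(1)}=g']=(1-\xi_i)G_i(o_i,g'|\xi_i)$, the bounds behind Claims A and B, and the subadditivity bound $-\frac{1}{n}\sum_{g'}\mu_i^{(n)}(g')\log\mu_i^{(n)}(g')\le h_i$ (which is indeed a clean way to get the $n$-uniform entropy bound on the reals, but does not complexify). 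To close the gap you must dominate the complexified series: upper bounds such as $\bigl|F_i^{\,\C}(o_i,g'|\cdot)\bigr|\le F_i(o_i,g'|\cdot)$ evaluated at inflated real parameters come for free from nonnegative coefficients, but the factors $\log\bigl(F_i(o_i,gg'|\xi_i)/F_i(o_i,g'|\xi_i)\bigr)$ also require a lower bound on the modulus of the complexified first-visit functions, uniformly in $g'$, and the positivity argument you use for real parameters (a product of one-step probabilities along a path) is lost after complexification, so an additional argument is needed there. To be fair, the paper's own proof is terse on exactly this interchange; still, as written your step ``locally uniform limit $\Rightarrow$ analytic'' is where the real work lies, and the proposal does not supply it.
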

\begin{proof}
The claim follows directly with the formula given in Theorem \ref{thm:entropy-groups}: the involved generating functions $F_i(o_i,g|z)$ and $G_i(o_i,o_i|z)$ are analytic when varying the probability measure of constant support, and the values $\xi_i$ can also be rewritten as
$$
\xi_i=\sum_{k_1,\dots,k_r,l_{1,1},\dots,l_{r,|S_r|}\geq 1} x(k_1,\dots,k_r,l_{1,1},\dots,l_{r,|S_r|})\prod_{i\in\calI} \alpha_i^{k_i} \prod_{j=1}^{| S_i|} \mu_i(x_{i,j})^{l_{i,j}},
$$
where $S_i=\{x_{i,1},\dots,x_{i,|S_i|}\}$. This yields the claim.
\end{proof}
\textit{Remarks:}
\begin{enumerate}
\item Corollary \ref{cor:analyticity} holds also for the case of free products of \textit{finite} graphs, if one varies the transition probabilities continously  under the assumption that the sets $\{(x_i,y_i)\in V_i\times V_i\mid p_i(x_i,y_i)>0\}$ remain constant: one has to rewrite $\xi_i$ as power series in terms of (finitely many) $p_i(x_i,y_i)$ and gets analyticity with the formula given in Theorem \ref{thm:entropy}.
\item Analyticity holds also for the drift (w.r.t. the block length and w.r.t. the natural graph metric) of nearest neighbour random walks due to the formulas given \mbox{in \cite[Section 5 and 7]{gilch:07}.}
\item The formula for entropy and drift given in Mairesse and Math\'eus \cite{mairesse1} for random walks on free products of \textit{finite} groups depends also analytically on the transition probabilities.
\end{enumerate}

\section{Entropy Inequalities}

\label{sec:entropy-inequalities}

In this section we consider the case of free products of \textit{finite} sets
$V_1,\dots,V_r$, where $V_i$ has $|V_i|$ vertices. We want to
establish a connection between asymptotic entropy, rate of escape and the
volume growth rate of the free product $V$. For $n\in\N_0$, let $S_0(n)$ be the set
of all words of $V$ of block length $n$. The following lemmas give an
answer how fast the free product grows.
\begin{Lemma}
The sphere growth rate w.r.t. the block length is given by 
$$
s_0 := \lim_{n\to\infty} \frac{\log |S_0(n)|}{n}=\log \lambda_0,
$$
where $\lambda_0$ is the Perron-Frobenius
eigenvalue of the $r\times r$-matrix $D=(d_{i,j})_{1\leq,i,j\leq r}$ with
$d_{i,j}=0$ for $i=j$ and $d_{i,j}=|V_j|-1$ otherwise.
\end{Lemma}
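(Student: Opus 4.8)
The plan is to count the words in $S_0(n)$ by a transfer-matrix (renewal) argument and then read off the exponential growth rate from Perron--Frobenius theory. We may assume $|V_i|\ge 2$ for every $i\in\calI$: if $|V_i|=1$ then $V_i^\times=\emptyset$ and no word of $V$ contains a letter of type $i$, so that factor can be deleted without changing $V$. For $n\ge 1$ and $j\in\calI$ let $a_j(n)$ be the number of words $x_1\dots x_n\in S_0(n)$ with $\tau(x_n)=j$, so that $|S_0(n)|=\sum_{j\in\calI}a_j(n)$. Appending a last letter of type $j\neq\tau(x_{n-1})$ yields the recursion $a_j(n)=(|V_j|-1)\sum_{i\neq j}a_i(n-1)$ with initial data $a_j(1)=|V_j|-1$. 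Writing $\mathbf a(n):=(a_1(n),\dots,a_r(n))^{\mathsf{T}}$, this reads $\mathbf a(n)=D^{\mathsf{T}}\mathbf a(n-1)$ with $D$ exactly the matrix of the statement, hence $|S_0(n)|=\mathbf 1^{\mathsf{T}}(D^{\mathsf{T}})^{n-1}\mathbf a(1)$.

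Next I would invoke Perron--Frobenius. Since $r\ge 2$ and every off-diagonal entry of $D$ equals some $|V_j|-1\ge 1>0$, the matrix $D$ (and hence $D^{\mathsf{T}}$) is nonzero and irreducible; therefore its spectral radius $\lambda_0$ is a strictly positive, simple eigenvalue admitting a strictly positive left eigenvector $\phi=(\phi_1,\dots,\phi_r)^{\mathsf{T}}$ of $D^{\mathsf{T}}$, i.e., $\phi^{\mathsf{T}}D^{\mathsf{T}}=\lambda_0\,\phi^{\mathsf{T}}$. Pairing the recursion with $\phi$ collapses it to a scalar identity: $\phi^{\mathsf{T}}\mathbf a(n)=\lambda_0^{\,n-1}\,\phi^{\mathsf{T}}\mathbf a(1)$, and $\phi^{\mathsf{T}}\mathbf a(1)>0$ because $\phi$ and $\mathbf a(1)$ are both strictly positive.

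To finish, note that strict positivity of $\phi$ gives, with $c_1:=\min_i\phi_i>0$ and $c_2:=\max_i\phi_i$, the two-sided bound $c_1\,|S_0(n)|\le\phi^{\mathsf{T}}\mathbf a(n)\le c_2\,|S_0(n)|$ for all $n\ge 1$. Combined with the scalar identity this forces $\frac{\phi^{\mathsf{T}}\mathbf a(1)}{c_2}\,\lambda_0^{\,n-1}\le|S_0(n)|\le\frac{\phi^{\mathsf{T}}\mathbf a(1)}{c_1}\,\lambda_0^{\,n-1}$, so $|S_0(n)|$ and $\lambda_0^{\,n}$ differ only by a bounded multiplicative factor. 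Applying $\tfrac1n\log(\cdot)$ and letting $n\to\infty$ then yields that the limit $s_0$ exists and equals $\log\lambda_0$.

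The calculations here are routine; the only points requiring a little care are the harmless reduction to the non-degenerate case $|V_i|\ge 2$, and the verification that $D$ is irreducible so that $\lambda_0>0$ and a strictly positive eigenvector is available --- this matters because for $r=2$ the matrix $D$ is periodic (not primitive), yet Perron--Frobenius for irreducible nonnegative matrices still supplies the strictly positive Perron eigenvector that the argument needs.
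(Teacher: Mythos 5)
Your proof is correct and follows essentially the same route as the paper: both express $|S_0(n)|$ through powers of the same matrix $D$ (your $\mathbf 1^{\mathsf T}(D^{\mathsf T})^{n-1}\mathbf a(1)$ is exactly the paper's $\mathds{1}^T\widehat D D^{n-1}\mathds{1}$) and then sandwich it between constant multiples of $\lambda_0^{\,n-1}$ using a strictly positive Perron--Frobenius eigenvector, the only cosmetic difference being that you pair with one positive eigenvector and use its min/max entries, while the paper bounds $\mathds{1}$ componentwise between two scaled eigenvectors. Your remarks on the degenerate case $|V_i|=1$ and on irreducibility versus primitivity are careful but not substantively different from the paper's argument.
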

\begin{proof}
Denote by $\widehat D$ the $r\times
r$-diagonal matrix, which has entries $|V_1|-1,|V_2|-1, \dots,
|V_r|-1$ on its diagonal. Let $\mathds{1}$ be the $(r\times 1)$-vector with all entries
equal to $1$. Thus, we can write
$$
|S_0(n)| = \mathds{1}^T \widehat D D^{n-1}\mathds{1}.
$$
Let $0<v_1\leq \mathds{1}$ and $v_2\geq
\mathds{1}$ be eigenvectors of $D$ w.r.t. the Perron-Frobenius \mbox{eigenvalue $\lambda_0$.} Then
\begin{eqnarray*}
|S_0(n)| & \geq &  \mathds{1}^T \widehat D D^{n-1} v_1  = C_1 \cdot \lambda_0^{n-1},\\
|S_0(n)| & \leq &  \mathds{1}^T \widehat D D^{n-1} v_2  = C_2 \cdot \lambda_0^{n-1},
\end{eqnarray*}
where $C_1,C_2$ are some constants independent from $n$. Thus,
$$
\frac{\log |S_0(n)|}{n} = \log |S_0(n)|^{1/n} \xrightarrow{n\to\infty} \log \lambda_0.$$
\end{proof}
Recall from the Perron-Frobenius theorem that $\lambda_0\geq \sum_{i=1,i\neq
  j}^r (|\Gamma_i|-1)$ for each $j\in\mathcal{I}$; in particular, $\lambda_0\geq
1$. We also take a look on the natural graph metric and its growth rate. For
this purpose, we define
$$
S_1(n):=\bigl\lbrace
x\in V \,\bigl|\, p^{(n)}(o,x)>0, \forall m<n: p^{(m)}(o,x)=0
\bigr\rbrace,
$$
that is, the set of
all vertices in $V$ which are at distance $n$ to the root $o$ w.r.t. the
natural graph metric.
\par
We now construct a new graph, whose adjacency matrix allows us to describe the exponential growth of $S_1(n)$ as $n\to\infty$.
For this purpose, we visualize the sets $V_1,\dots,V_r$ as graphs $\mathcal{X}_1,\dots,\mathcal{X}_r$ with vertex
sets $V_1,\dots,V_r$ equipped with the following edges: for $x,y\in V_i$, there
is a directed edge from $x$ to $y$ if and only if $p_i(x,y)>0$.
Consider now directed spanning trees $\mathcal{T}_1,\dots,\mathcal{T}_r$ of the
graphs $\mathcal{X}_1,\dots,\mathcal{X}_r$ such that the graph distances of vertices in
$\mathcal{T}_i$ to the root $o_i$ remain the same as in $\mathcal{X}_i$. We now
investigate the free product $\mathcal{T}=\mathcal{T}_1\ast \dots \ast
\mathcal{T}_r$, which is again a tree. We make the crucial observation that $\mathcal{T}$ can be seen
as the directed cover of a finite directed graph $F$, where $F$ is defined in
the following way:
\begin{enumerate}
\item The vertex set of $F$ is given by $\{o\} \cup \bigcup_{i\in\mathcal{I}}
  V_i^\times $ with root $o$.
\item The edges of $F$ are given as follows: first, we add all edges inherited
  from one of the trees $\mathcal{T}_1,\dots,\mathcal{T}_r$, where $o$ plays the
  role of $o_i$ for each $i\in\mathcal{I}$. Secondly, we add for all
  $i\in\mathcal{I}$ and every $x\in V_i^\times$ an edge from $x$ to each $y\in
  V_j^\times$, $j\neq i$, whenever there is an edge from $o_i$ to $y$ in $\mathcal{T}_j$.
\end{enumerate}
The tree $\mathcal{T}$ can be seen as a \textit{periodic tree}, which is also
called a \textit{tree with finitely many cone
  types}; for more details we refer to Lyons \cite{lyons:book} and Nagnibeda and
Woess \cite{woess2}. Now we are able to state the following lemma:
\begin{Lemma}
The sphere growth rate w.r.t. the natural graph metric defined by 
$$
s_1 := \lim_{n\to\infty} \frac{\log |S_1(n)|}{n}
$$
exists. Moreover, we have the equation $s_1=\log \lambda_1$, where $\lambda_1$ is the
Perron-Frobenius eigenvalue of the adjacency matrix of the graph $F$.
\end{Lemma}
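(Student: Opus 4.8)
The plan is to first reduce the count $|S_1(n)|$ to the size of the $n$-th sphere of the tree $\mathcal{T}=\mathcal{T}_1\ast\cdots\ast\mathcal{T}_r$, then to express that size through the adjacency matrix of $F$, and finally to extract its exponential growth by Perron--Frobenius theory. The reduction step rests on the fact that, in a free product of graphs, the distance from the root to a word $v_1\cdots v_m$ is $\sum_{k=1}^m d_{\mathcal{X}_{\tau(v_k)}}\!\bigl(o_{\tau(v_k)},v_k\bigr)$, and the analogous identity holds in $\mathcal{T}$ with $d_{\mathcal{T}_i}$ in place of $d_{\mathcal{X}_i}$; since each spanning tree $\mathcal{T}_i$ was chosen so that $d_{\mathcal{T}_i}(o_i,\cdot)=d_{\mathcal{X}_i}(o_i,\cdot)$, the two distance functions on $V$ coincide. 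Hence $S_1(n)$ is precisely the set of vertices at level $n$ of the rooted tree $\mathcal{T}$, and $|S_1(n)|$ is the size of its $n$-th sphere.

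Next I would use that $\mathcal{T}$ is the directed cover of the finite graph $F$ (as observed in the text, in the sense of periodic trees / trees with finitely many cone types, see Lyons \cite{lyons:book} and Nagnibeda--Woess \cite{woess2}). In a directed cover the vertices at level $n$ are in bijection with the directed walks of length $n$ in $F$ issued from the root $o$. Writing $A$ for the adjacency matrix of $F$ and $\mathds{1}$ for the all-ones vector, this gives the exact formula
$$
|S_1(n)| \;=\; \bigl(A^{n}\mathds{1}\bigr)_o \;=\; \sum_{t\in V(F)}\bigl(A^{n}\bigr)_{o,t}.
$$

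Finally I would analyse this quantity with the Perron--Frobenius theory for nonnegative matrices. By construction of $F$ the root $o$ has no incoming edge, so it forms a trivial strongly connected component of spectral radius $0$; the remaining vertex set $\bigcup_{i}V_i^\times$ is, using $r\ge 2$ and the cross-factor edges, a single strongly connected component $C$, and (since $R>1$ forces $V$ to be infinite, whence at least two factors are non-trivial) a non-degenerate one, so its irreducible adjacency block $A_C$ has a Perron--Frobenius eigenvalue $\lambda_1\ge 1$. Consequently the spectral radius of $A$ equals $\lambda_1$, and, as $C$ is the unique non-trivial component (there is no chain of components realising the maximal spectral radius), one has $\lVert A^{n}\rVert\le c_1\lambda_1^{n}$ for a constant $c_1$; for the matching lower bound one extends the Perron eigenvector of $A_C$ by $0$ on $o$ and iterates $A$, using that $o$ feeds into $C$ in one step, to obtain $\bigl(A^{n}\mathds{1}\bigr)_o\ge c_2\lambda_1^{n}$ for all large $n$. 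Combining the two estimates gives $|S_1(n)|^{1/n}\to\lambda_1$, i.e. the limit $s_1$ exists and equals $\log\lambda_1$; alternatively this last step is exactly the statement on sphere sizes of periodic trees in \cite{lyons:book,woess2}.

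I expect the only genuinely delicate point to be this final two-sided estimate, that is, that the \emph{limit} (not merely the $\limsup$) of $\tfrac1n\log|S_1(n)|$ exists and equals $\log\lambda_1$. Because $A$ is reducible and the block $A_C$ may be periodic, one cannot just quote the Perron--Frobenius theorem for an irreducible matrix; one must instead invoke the refined asymptotics $\bigl(A^{n}\mathds{1}\bigr)_o\asymp n^{k}\rho^{n}$, with $\rho$ the largest spectral radius among the components of $F$ reachable from $o$ and $k$ the length of the longest chain of such maximal components, and check that here $\rho=\lambda_1$ and $k=0$. Everything else is bookkeeping with the free-product distance formula and the definition of $F$.
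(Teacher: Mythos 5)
Your proposal is correct, and its skeleton is the same as the paper's: you restrict to the spanning-tree free product $\mathcal{T}$ using that the distance from $o$ is unchanged (the paper's ``the graph metric remains invariant under the restriction of $V$ to $\mathcal{T}$''), and you identify $\mathcal{T}$ with the directed cover of $F$. The difference lies in the last step: the paper simply quotes the known growth result for periodic trees / trees with finitely many cone types (Lyons, Nagnibeda--Woess), adding only the remark that the root's special cone type is harmless, whereas you prove that statement from scratch via $|S_1(n)|=(A^n\mathds{1})_o$ and a two-block Perron--Frobenius analysis of the reducible matrix $A$ (trivial component $\{o\}$ feeding the strongly connected block on $\bigcup_i V_i^\times$). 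Your treatment of the point you flag as delicate is in fact fine: bounding $\mathds{1}$ above and below by multiples of the Perron right eigenvector of the irreducible block gives row sums $\asymp\lambda_1^{n}$ irrespective of periodicity, and since the maximal spectral radius occurs in only one component of the chain $o\to C$ there is no polynomial correction, so the limit (not just the $\limsup$) exists. What each approach buys: the paper's proof is two lines and leans on standard literature; yours is self-contained, and it substantiates concretely the paper's unproved remark about the root. One cosmetic point: you do not need $R>1$ to see that the block $C$ is nondegenerate -- since $p_i(x,x)=0$ forces $|V_i|\ge 2$ for every $i$ and $r\ge2$, every factor contributes a vertex to $C$ and $C$ contains a cycle, whence $\lambda_1\ge 1$ automatically.
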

\begin{proof}
Since the graph metric remains invariant under the
restriction of $V$ to $\mathcal{T}$ and since it is well-known that the
growth rate exists for periodic trees (see \mbox{Lyons \cite[Chapter
3.3]{lyons:book}),} we have existence of the limit $s_1$. More precisely, $|S_1(n)|^{1/n}$ tends to the Perron-Frobenius eigenvalue of the adjacency matrix of $F$ as $n\to\infty$. 
For sake of completeness, we remark that the root of
$\mathcal{T}$ plays a special role (as a cone type) but this does not affect the
application of the results about directed covers to our case.
\end{proof}
For $i\in\{0,1\}$, we write $B_i(n)=\bigcup_{k=0}^n S_i(k)$.
Now we can prove:
\begin{Lemma}
The volume growth w.r.t. the block length, w.r.t. the natural graph metric
respectively, is given by
$$
g_0:=\lim_{n\to\infty} \frac{\log|B_0(n)|}{n}=\log \lambda_0, \quad g_1:=\lim_{n\to\infty}
\frac{\log|B_1(n)|}{n}=\log \lambda_1 \quad \textrm{respectively}.
$$
\end{Lemma}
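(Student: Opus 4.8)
The plan is to sandwich each ball $B_i(n)$ between a single sphere and a polynomially weighted union of spheres, and then invoke the two preceding lemmas, which already identify $\lim_{n\to\infty}|S_i(n)|^{1/n}$ with $\lambda_i$ for $i\in\{0,1\}$. Concretely, I would first record the elementary two-sided estimate
$$
|S_i(n)| \;\le\; |B_i(n)| \;=\; \sum_{k=0}^{n}|S_i(k)| \;\le\; (n+1)\max_{0\le k\le n}|S_i(k)| ,
$$
valid for $i\in\{0,1\}$ because the spheres $S_i(k)$, $k=0,1,2,\dots$, are pairwise disjoint. The left-hand inequality together with the preceding lemmas gives at once $\liminf_{n\to\infty}\frac1n\log|B_i(n)|\ge s_i=\log\lambda_i$, so the whole problem reduces to the matching upper bounds.

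For $g_0$ I would use the explicit bound $|S_0(k)|=\mathds{1}^T\widehat D D^{k-1}\mathds{1}\le C_2\lambda_0^{k-1}$ from the first sphere lemma (with $C_2$ independent of $k$) together with the Perron--Frobenius fact $\lambda_0\ge1$ noted right after it. Then
$$
|B_0(n)| \;\le\; 1+C_2\sum_{k=1}^{n}\lambda_0^{k-1} \;\le\; 1+C_2\,n\,\lambda_0^{n-1},
$$
so $\frac1n\log|B_0(n)|\le\frac1n\log\!\bigl(1+C_2 n\lambda_0^{n-1}\bigr)\to\log\lambda_0$, which yields $g_0=\log\lambda_0$.

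For $g_1$ one only has the weaker information $|S_1(n)|^{1/n}\to\lambda_1$, since the periodic structure of $\mathcal T$ prevents a clean comparison $|S_1(n)|\asymp\lambda_1^n$, but an $\varepsilon$-argument suffices. Since every vertex of $V$ is reachable from $o$ and $V$ is infinite, a minimal path to a far vertex meets $S_1(k)$ for each intermediate $k$, so $|S_1(k)|\ge1$ for all $k$ and hence $\lambda_1\ge1$. Fixing $\varepsilon>0$, choose $C_\varepsilon$ with $|S_1(k)|\le C_\varepsilon(\lambda_1+\varepsilon)^k$ for all $k\ge0$; then
$$
|B_1(n)| \;\le\; C_\varepsilon\sum_{k=0}^{n}(\lambda_1+\varepsilon)^k \;\le\; C_\varepsilon\,(n+1)\,(\lambda_1+\varepsilon)^n ,
$$
so $\limsup_{n\to\infty}\frac1n\log|B_1(n)|\le\log(\lambda_1+\varepsilon)$, and letting $\varepsilon\downarrow0$ gives $g_1=\log\lambda_1$.

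The argument is essentially routine once the sphere growth rates are in hand; the only point needing a little care is the upper bound for $g_1$, where, lacking a two-sided estimate for $|S_1(n)|$, one must run the $\varepsilon$-argument and verify $\lambda_1\ge1$ so that the geometric sum is dominated by a polynomial multiple of its last term. Everything else is bookkeeping on the disjoint-sphere decomposition of the balls.
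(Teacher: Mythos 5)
Your proof is correct and follows essentially the same route as the paper: sandwich the ball between a sphere and a polynomially weighted geometric sum of sphere sizes, use $\lambda_i\geq 1$ to dominate the sum by $(n+1)$ times its largest term, and run the $\varepsilon$-argument where only $|S_i(n)|^{1/n}\to\lambda_i$ is available. The only (harmless) differences are cosmetic: the paper treats $g_0$ and $g_1$ uniformly with two-sided $\varepsilon$-bounds on $|S(k)|$, whereas you use the one-line lower bound $|B_i(n)|\geq|S_i(n)|$ and the explicit estimate $|S_0(k)|\leq C_2\lambda_0^{k-1}$ for the block-length case.
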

\begin{proof}
For ease of better readability, we omit the subindex $i\in\{0,1\}$ in the
following, since the proofs for $g_0$ and $g_1$ are completely analogous.
Choose any small $\varepsilon>0$. Then there is some $K_\varepsilon$ such that for all
$k\geq K_\varepsilon$
$$
\lambda^k e^{-k\varepsilon} \leq |S(k)|
\leq \lambda^k e^{k\varepsilon}.
$$
Write $C_\varepsilon=\sum_{i=0}^{K_\varepsilon-1} |S(i)|$. Then for $n\geq K_\varepsilon$:
\begin{eqnarray*}
|B(n)|^{1/n} & =& \sqrt[n]{\sum_{k=0}^n |S(k)|}
\leq \sqrt[n]{C_\varepsilon + \sum_{k=K_\varepsilon}^n \lambda^k e^{k\varepsilon}}
 =  \lambda e^{\varepsilon} \sqrt[n]{\frac{C_\varepsilon}{\lambda^n e^{n\varepsilon}} +
    \sum_{k=K_\varepsilon}^n \frac{1}{\lambda^{n-k} e^{(n-k)\varepsilon}}}\\
&\leq & \lambda e^{\varepsilon} \sqrt[n]{\frac{C_\varepsilon}{\lambda^n e^{n\varepsilon}} +
    (n-K_\varepsilon+1)} \xrightarrow{n\to\infty} \lambda e^{\varepsilon}.
\end{eqnarray*}
In the last inequality we used the fact $\lambda \geq 1$.
Since we can choose $\varepsilon>0$ arbitrarily small, we get
$\limsup_{n\to\infty}|B(n)|^{1/n}\leq \lambda$. Analogously:
$$
|B(n)|^{1/n}  
\geq  \sqrt[n]{C_\varepsilon + \sum_{k=K_\varepsilon}^n \lambda^k e^{-k\varepsilon}}
 =  \lambda \sqrt[n]{\frac{C_\varepsilon}{\lambda^n} +
    \sum_{k=K_\varepsilon}^n \frac{e^{-k\varepsilon}}{\lambda^{n-k}}}
\xrightarrow{n\to\infty} \lambda e^{-\varepsilon}.
$$
That is, $\lim_{n\to\infty}\frac{1}{n} \log |B(n)|=\log\lambda$.
\end{proof}
For $i\in\{0,1\}$, define $l_i:V\to\mathbb{N}_0$ by $l_0(x)=\Vert x\Vert$ and
$l_1(x)=\inf \{m\in\mathbb{N}_0 \mid p^{(m)}(o,x)>0\}$. Then the limits
$\ell_i=\lim_{n\to\infty} l_i(X_n)/n$ exist; see \cite[Theorem 3.3, Section
7.II]{gilch:07}.
Now we can establish a connection between entropy, rate of escape and volume growth:
\begin{Cor}\label{cor:inequalities}
$h \leq g_0 \cdot \ell_0$ and $h \leq g_1 \cdot \ell_1$.
\end{Cor}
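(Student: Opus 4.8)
The statement is the analogue of Guivarc'h's fundamental inequality $h\le \ell\cdot v$ relating entropy, drift and growth, and I would prove it directly from the elementary fact that a probability distribution concentrated on a finite set of cardinality $N$ has Shannon entropy at most $\log N$. By Theorem~\ref{thm:entropy} the limit $h=\lim_{n\to\infty}\frac1n\,\E[-\log\pi_n(X_n)]=\lim_{n\to\infty}\frac1n\,H(\pi_n)$ exists, where $H(\pi_n)=-\sum_{x\in V}\pi_n(x)\log\pi_n(x)$. The plan is to treat $i=0$ and $i=1$ simultaneously, writing $g_i=\log\lambda_i$ for the volume growth and recalling that $\ell_i=\lim_{n\to\infty}l_i(X_n)/n$ exists; note that $\pi_n$ is supported on the finite set $B_i(n)$, since $p^{(n)}(o,x)>0$ forces $l_i(x)\le n$ (for $i=0$ because $\bigl|\,\|X_{m+1}\|-\|X_m\|\,\bigr|\le 1$ at every step, for $i=1$ directly by the definition of $l_1$).

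Fix $\eps>0$ and put $A_n:=B_i\bigl(\lfloor(\ell_i+\eps)n\rfloor\bigr)\cap B_i(n)$, $p_n:=\pi_n(A_n)$ and $q_n:=1-p_n$. Since $l_i(X_n)/n\to\ell_i$ in probability, $p_n\ge\Prob\bigl[l_i(X_n)\le(\ell_i+\eps)n\bigr]\to 1$, hence $q_n\to0$. Splitting $H(\pi_n)$ over $A_n$ and over $B_i(n)\setminus A_n$, and applying to each piece the bound that a sub-probability measure of total mass $t$ on a set of cardinality $m$ has entropy at most $-t\log t+t\log m$, I obtain
$$H(\pi_n)\ \le\ -p_n\log p_n-q_n\log q_n+p_n\log|A_n|+q_n\log|B_i(n)|.$$

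Then I would divide by $n$ and let $n\to\infty$: the terms $-p_n\log p_n$ and $-q_n\log q_n$ are bounded by $1/e$, so they contribute $0$; by the volume-growth lemma $\frac1n\log|A_n|\le\frac1n\log\bigl|B_i(\lfloor(\ell_i+\eps)n\rfloor)\bigr|\to(\ell_i+\eps)g_i$, while $\frac1n\log|B_i(n)|\to g_i$ stays bounded and $q_n\to0$, so the last term also contributes $0$. Hence $\limsup_{n\to\infty}\frac1n H(\pi_n)\le(\ell_i+\eps)g_i$, and since the limit $h=\lim_{n\to\infty}\frac1n H(\pi_n)$ exists this gives $h\le(\ell_i+\eps)g_i$. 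Letting $\eps\downarrow0$ and taking $i=0$ and then $i=1$ yields $h\le g_0\ell_0$ and $h\le g_1\ell_1$.

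I do not expect a real obstacle here; the only points requiring (routine) care are the finiteness of $B_i(n)$ — which holds because the $V_i$ are finite, so $\mathcal X$ is locally finite — and checking that the two error terms genuinely decay after division by $n$, for which one only needs $q_n\to0$ together with the finiteness of the growth rates $g_i$. As a consistency check one may note that, combined with $h>0$ from Theorem~\ref{th:entropy2}, these inequalities force $\ell_0,\ell_1>0$ and $\lambda_0,\lambda_1>1$.
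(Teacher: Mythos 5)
Your proof is correct, but it follows a genuinely different route from the paper. The paper's proof is a one-line counting argument at the level of individual atoms: using the pointwise lower bound $\liminf_n -\frac1n\log\pi_n(X_n)\ge h$ (equivalently the convergence in probability from Corollary \ref{cor:entropy3}) together with $l_i(X_n)/n\to\ell_i$, it notes that with probability at least $1-\eps$ the walk sits at a point $x$ with $\pi_n(x)\le e^{-(h-\eps)n}$ and $l_i(x)\le(\ell_i+\eps)n$, so that $1-\eps\le e^{-(h-\eps)n}\,\bigl|B_i\bigl((\ell_i+\eps)n\bigr)\bigr|$, and then takes logarithms, divides by $n$ and lets $\eps\downarrow 0$. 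You instead work directly with the Shannon entropy $H(\pi_n)=\E[-\log\pi_n(X_n)]$, split it over $A_n=B_i(\lfloor(\ell_i+\eps)n\rfloor)$ and its complement in $B_i(n)$, and invoke the maximum-entropy bound for a sub-probability mass $t$ on $m$ points, $-t\log t+t\log m$; the error terms vanish after division by $n$ because $-t\log t\le 1/e$, $q_n\to0$ and $\frac1n\log|B_i(n)|\to g_i$ is bounded. What your argument buys is that it only uses the existence of $h$ as a limit of expectations (Theorem \ref{thm:entropy}), the a.s.\ (hence in-probability) convergence $l_i(X_n)/n\to\ell_i$, and the volume-growth lemmas — it does not need the finer pointwise statement $\liminf_n-\frac1n\log\pi_n(X_n)\ge h$; what the paper's route buys is brevity, since that pointwise information is already available from Section \ref{subsec:entropy}. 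Your supporting details (support of $\pi_n$ in $B_i(n)$ via $\|X_{m+1}\|-\|X_m\|\in\{-1,0,1\}$, finiteness of the balls, the rescaling $\frac1n\log|B_i(\lfloor(\ell_i+\eps)n\rfloor)|\to(\ell_i+\eps)g_i$) are all sound.
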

\begin{proof}
Let be $i\in\{0,1\}$ and $\varepsilon >0$. Then there is some $N_\varepsilon\in\N$ such that for
all $n\geq N_\varepsilon$
$$
1-\varepsilon \leq 
\Prob\bigl( \bigl\lbrace
x\in V \mid -\log \pi_n(x) \geq (h-\varepsilon)n, l_i(x) \leq
(\ell_i+\varepsilon)n\bigr\rbrace\bigr) 
\leq
e^{-(h-\varepsilon)n} \cdot \bigl|B_i\bigl((\ell_i+\varepsilon)n\bigr)\bigr|.
$$
That is,
$$
(h-\varepsilon) + \frac{\log (1-\varepsilon)}{n} \leq (\ell_i+\varepsilon)
\cdot \frac{\log \bigl|B_i\bigl((\ell_i+\varepsilon)n\bigr)\bigr|}{(\ell_i+\varepsilon)n}.
$$
If we let $n$ tend to infinity and make $\varepsilon$ arbitrarily small, we get
the claim.
\end{proof}
Finally, we remark that an analogous inequality for random walks on groups was given by \mbox{Guivarc'h \cite{guivarch},} and more generally for space- and time-homogeneous Markov chains by Kai\-ma\-no\-vich and Woess \cite[Theorem 5.3]{kaimanovich-woess}.

\section{Examples}

\label{sec:examples}

\subsection{Free Product of Finite Graphs}

Consider the graphs $\mathcal{X}_1$ and $\mathcal{X}_2$ with the transition
probabilities sketched  in Figure \ref{entropy-bsp2}. We set
$\alpha_1=\alpha_2=1/2$. For the computation of $\ell_0$ we need the following
functions:
$$
\begin{array}{rclcrcl}
F_1(g_1,o_1|z)  & = &\frac{z^2}{2} \frac{1}{1-z^2/2},& \quad&
F_2(h_1,o_2|z)  &= & \frac{z^2}{2} \frac{1}{1-z^3/2},\\[2ex]
\xi_1(z) & =&
\frac{z/2}{1-\frac{z}{2}\frac{\xi_2(z)^2}{2}\frac{1}{1-\xi_2(z)^3/2}},& \quad&
\xi_2(z) &= & 
\frac{z/2}{1-\frac{z}{2}\frac{\xi_1(z)^2}{2}\frac{1}{1-\xi_1(z)^2/2}}.
\end{array}
$$
Simple computations with the help of \cite[Section 3]{gilch:07} and
\textsc{Mathematica} allow us to determine the rate of escape of the random
walk on $\mathcal{X}_1\ast \mathcal{X}_2$ as $\ell_0=0.41563$. For the computation of the entropy, we need also the following
generating functions:
\begin{eqnarray*}
&& L_1(o_1,g_1|z)  =  \frac{z}{1-z^2/2},\quad
L_1(o_1,g_2|z)  =  \frac{z^2}{1-z^2/2},\quad
L_2(o_2,h_1|z)  =  \frac{z}{1-z^3/2},\\
&& L_2(o_2,h_2|z)  =  \frac{z^2}{1-z^3/2},\quad
L_2(o_2,h_3|z)  =  \frac{z^3/2}{1-z^3/2}.
\end{eqnarray*}
Thus, we get the asymptotic entropy as $h=0.32005$.
\begin{figure}\label{entropy-bsp2}
\includegraphics[width=6cm]{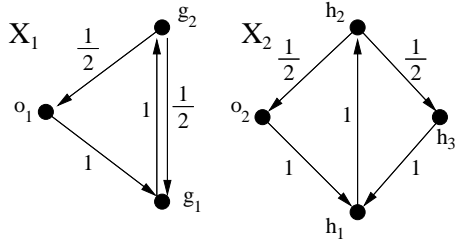}
\caption{Finite graphs $\mathcal{X}_1$ and $\mathcal{X}_2$}
\end{figure}

\subsection{$(\Z\times \Z/2)\ast  (\Z\times \Z/2)$}
Consider the free product $\Gamma=\Gamma_1\ast\Gamma_2$ of the \textit{infinite}
groups $\Gamma_i=\Z\times (\Z/2\Z)$ with $\alpha_i=1/2$ and $\mu_i\bigl((\pm
1,0)\bigr)= \mu_i\bigl((0,1)\bigr)=1/3$ for each $i\in\{1,2\}$. We set
$a:=(1,0)$, $b:=(1,1)$, $c:=(0,1)$ and $\lambda(x,y):=x$ for $(x,y)\in\Gamma_i$.
Define
\begin{eqnarray*}
\hat F(a|z) & := & \sum_{n\geq 1} \Prob\bigl[Y_n=a, \forall m<n: \lambda(Y_m)<1
\,\bigl|\, Y_0=(0,0)\bigr]\,z^n,\\
\hat F(b|z) & := & \sum_{n\geq 1} \Prob\bigl[Y_n=b, \forall m<n: \lambda(Y_m)<1\,\bigl|\, Y_0=(0,0)\bigr]\,z^n,
\end{eqnarray*}
where $(Y_n)_{n\in\mathbb{N}_0}$ is a random walk on $\Z\times \Z/2$ governed by $\mu_1$.
The above functions satisfy the following system of equations:
\begin{eqnarray*}
\hat F(a|z) & = & \frac{z}{3} \Bigl( 1+ \hat F(b|z) + \hat F(a|z)^2 + \hat
F(b|z)^2\Bigr),\\
\hat F(b|z) & = & \frac{z}{3} \Bigl(\hat F(a|z) + \hat F(a|z) \hat F(b|z) + \hat
F(b|z)\hat F(a|z)\Bigr).
\end{eqnarray*}
From this system we obtain explicit formulas for $\hat F(a|z)$ and $\hat
F(b|z)$. We write $F\bigl(n,j|z):=F_1\bigl((0,0),(n,j)|z)$ for $(n,j)\in
\Z\times \Z/2$.
To compute the entropy rate  we  have to solve the following
system of equations:
\begin{eqnarray*}
F(a|z) & = & \frac{z}{3} \Bigl(1+F(b|z) + \hat F(a|z) F(a|z)
+ \hat F(b|z) F(b|z)\Bigr),\\
F(b|z) & = & \frac{z}{3} \Bigl(F(c|z)+F(a|z) + \hat F(a|z)
F(b|z)+ \hat F(b|z) F(a|z)\Bigr),\\
F(c|z) & = & \frac{z}{3} \Bigl( 1+ 2\,F(b|z)\Bigr).
\end{eqnarray*}
Moreover, we need the value $\xi_1(1)=\xi_2(1)=\xi$. This value can be computed
 analogously to \cite[Section 6.2]{gilch:07}, that is, $\xi$ has to be
computed numerically from the equation
$$
\frac{\xi}{2-2\xi} = \xi\, G_1(\xi) = \frac{\xi}{1-\frac{2}{3}\xi F(a|\xi)
  -\frac{1}{3}\xi F(c|\xi)}.
$$
Solving this equation with \textsc{Mathematica} yields $\xi=0.55973$. To
compute the entropy we have to evaluate the functions $F(g|z)$ at $z=\xi$ for
each $g\in\Z\times \Z_2$. For even $n\in\N$, we have the following formulas:
\begin{eqnarray*}
F\bigl((\pm n,0)| \xi \bigr) 
& = & \sum_{k=0}^{n/2} {n \choose 2k} \hat
F(b|\xi)^{2k} \hat F(a|\xi)^{n-2k}+ \\
&&\quad \sum_{k=0}^{n/2-1} {n \choose 2k+1} \hat
F(b|\xi)^{2k+1} \hat F(a|\xi)^{n-2k-1} F(c|\xi),\\
F\bigl((\pm n,1)| \xi \bigr)
 & = & \sum_{k=0}^{n/2-1} {n \choose 2k+1} \hat
F(b|\xi)^{2k+1} \hat F(a|\xi)^{n-2k-1}+\\
&&\quad \sum_{k=0}^{n/2} {n \choose 2k} \hat
F(b|\xi)^{2k} \hat F(a|\xi)^{n-2k} F(c|\xi).
\end{eqnarray*}
For odd $n\in\N$,
\begin{eqnarray*}
F\bigl((\pm n,0)| \xi \bigr)
 & = & \sum_{k=0}^{(n-1)/2} {n \choose 2k} \hat
F(b|\xi)^{2k} \hat F(a|\xi)^{n-2k}+\\
&&\quad  \sum_{k=0}^{(n-1)/2} {n \choose 2k+1} \hat
F(b|\xi)^{2k+1} \hat F(a|\xi)^{n-2k-1} F(c|\xi),\\
F\bigl((\pm n,1)| \xi \bigr)
 & = & \sum_{k=0}^{(n-1)/2} {n \choose 2k+1} \hat
F(b|\xi)^{2k+1} \hat F(a|\xi)^{n-2k-1}+\\
&&\quad  \sum_{k=0}^{(n-1)/2} {n \choose 2k} \hat
F(b|\xi)^{2k} \hat F(a|\xi)^{n-2k} F(c|\xi).
\end{eqnarray*}
Moreover, we define $\hat F := \Prob[\exists n\in\N: \lambda(X_n)=1]$. This
probability can be computed by conditioning on the first step and solving
$$
\hat F = \frac{\xi}{3} \bigl( 1+ \hat F + \hat F^2\bigr),
$$
that is, $\hat F=0.24291$. Observe that we get the following estimations:
\begin{eqnarray*}
F_1(o,g|\xi) & \leq & \hat F^{|\lambda(g)|} \quad \textrm{ for } g\in \Z\times \Z_2,\\
F_1(o,g|\xi) & \geq & \hat F^{|\lambda(g)|-1}\cdot \min\bigl\lbrace
F_1(o_1,a|\xi), F_1(o_1,b|\xi) \bigr\rbrace \quad \textrm{ for } g\in \bigl(\Z\times \Z_2\bigr)\setminus \{(0,0),c\}.
\end{eqnarray*}
These bounds allow us to cap the sum over all $g'\in\Gamma_i^\times$ in
(\ref{equ-mathcalF}) and to estimate the tails of these sums. Thus, we can
compute the entropy rate numerically as $h=1.14985$.

\section*{Acknowledgements}

The author is grateful to Fr\'ed\'eric Math\'eus 
for discussion on the problems and several hints regarding content and exposition.


\end{document}